\newfont{\script}{rsfs10 scaled 1200}
\newtheorem{thm}{Theorem}
\newtheorem{corollary}{Corollary}
\newtheorem{lem}{Lemma}[section]
\theoremstyle{remark}
\newcommand{\bc}{\mathbf{c}}
\theoremstyle{definition}
\newtheorem{defn}{Definition}[section]
\title{The  Inhomogeneous Hall's  Ray} 
\author{D.J. Crisp, W. Moran and A.D. Pollington}
\date{March   2012}
\begin{document}
\maketitle
\section{Introduction}
\label{sec:1www}
The expression 
\begin{equation*}
{\mathcal M}_{+}(\alpha,\beta)=\liminf_{q\to\infty}q||q\alpha-\beta||
\end{equation*}
measures how well multiples of a fixed irrational $\alpha>0$
approximate a real number $\beta$. A similar concept is defined by Rockett
and Sz\"usz (\cite{rockett92:_contin_fract} Ch. 4, \S9), where they consider
the slight variant, ${\mathcal M}(\alpha,\beta)$, (the \emph{two-sided} case) with the
initial $q$ replaced by $|q|$.  It is evident that (see, for example,
\cite{pinnner01:Moron,Komatsu1997192})
\begin{equation*}
  {\mathcal M}(\alpha,\beta)=\min\bigl({\mathcal M}_{+}(\alpha,\beta),{\mathcal M}_{+}(\alpha,-\beta)\bigr). 
\end{equation*}
We define
\begin{equation}
  \label{eq:4}
  \begin{aligned}[t]
  {\mathcal S}_{+}(\alpha)&=\{{\mathcal M}_{+}(\alpha,\beta): \beta\in {\mathbf R}^{+}\}\\
  {\mathcal S}(\alpha)&=\{{\mathcal M}(\alpha,\beta): \beta\in {\mathbf R}^{+}\}. 
  \end{aligned}
\end{equation}
We refer to the first set  as the \emph{(one-sided) inhomogeneous
  approximation spectrum} of $\alpha$.

${\mathcal M}_{+}(\alpha,\beta)$ and the corresponding spectrum have been considered in
precisely this form by various authors,
\cite{komatsu99:_inhom_dioph_approx_some_quasi_period_expres,komatsu99:_ii,cusick96:_halls_ray_in_inhom_dioph_approx,blanksby67:_various_probl_inhom_dioph_approx},
and the ideas relate to inhomogeneous minima of binary quadratic forms
\cite{barnes56:_linear_inhom_dioph_approx,blanksby67:_various_probl_inhom_dioph_approx,MR0053162,MR0054654,MR0067939,barnes54,barnes2}.
 In the celebrated paper
(\cite{m.47:_sum_and_produc_of_contin_fract}), Hall showed that
the \emph{Lagrange spectrum},
${\mathcal L}=\{{\mathcal M}_{+}(\alpha,0):\alpha\in {\mathbf R}\}$,
 contains an interval $[0,\mu_{H}]$ ($\mu_H>0$) subsequently called \emph{Hall's
  Ray}. The precise value of $\mu_{H}$ has been determined by Freiman
(\cite{freiman73:_hall}) in a heroic calculation; we refer the reader to
\cite{cusick89:_markof_lagran}, where this result is discussed in detail.  Our
aim here is to prove  the existence of an interval $[0,\mu_{\alpha}]$ in the inhomogeneous spectrum for
all irrationals $\alpha$, though without a precise value for the maximum
endpoint of the interval. It is clear that the result fails for rational
$\alpha$.

Since ${\mathcal M}_{+}(\alpha,\beta)={\mathcal M}_{+}(\alpha,\beta+1)$, the values of $\beta$
may be restricted to the unit interval $[0,1)$.  Similarly, we may assume
without loss of generality that $0\le\alpha<1$.  The key theorem of this paper
is the following:
\begin{thm}
\label{thm:1}
For $\alpha$ irrational, the set ${\mathcal S}_{+}(\alpha)$ contains
an interval of the form $[0,\mu_{\alpha}]$ for some $\mu_{\alpha}>0$.  
\end{thm}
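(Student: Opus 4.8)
The plan is to pass from $\beta$ to its Ostrowski expansion relative to $\alpha$ and thereby turn ${\mathcal M}_{+}(\alpha,\beta)$ into something one can prescribe. Write $\alpha=[0;a_{1},a_{2},\dots]$, let $p_{n}/q_{n}$ be the convergents and $\theta_{n}=q_{n}\alpha-p_{n}$, so that $|\theta_{n}|=\|q_{n}\alpha\|$ and $\eta_{n}:=q_{n}|\theta_{n}|=\bigl(\alpha_{n+1}+[0;a_{n},\dots,a_{1}]\bigr)^{-1}$ with $\alpha_{n+1}=[a_{n+1};a_{n+2},\dots]$. Each $\beta\in(0,1)$ has a unique admissible expansion $\beta=\sum_{k\ge1}b_{k}\theta_{k-1}$ (with $0\le b_{1}<a_{1}$, $0\le b_{k}\le a_{k}$, $b_{k+1}=a_{k+1}\Rightarrow b_{k}=0$, and $(b_{k})$ not eventually $(a_{k})$), and the partial sums satisfy $q^{(N)}\alpha\equiv\beta^{(N)}\pmod1$ for $q^{(N)}=\sum_{k\le N}b_{k}q_{k-1}$ and $\beta^{(N)}=\sum_{k\le N}b_{k}\theta_{k-1}$, whence $\|q^{(N)}\alpha-\beta\|=\bigl|\sum_{k>N}b_{k}\theta_{k-1}\bigr|$. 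The first step is to identify the best inhomogeneous approximation denominators for $\beta$: these are, up to bounded shifts, the $q^{(N)}$ and their neighbours $q^{(N)}\pm q_{N}$, $q^{(N)}\pm q_{N-1}$, and for each such $q$ one has $\|q\alpha-\beta\|$ equal to the modulus of a corresponding tail. This yields ${\mathcal M}_{+}(\alpha,\beta)=\liminf_{N}F_{N}$, where $F_{N}$ is the minimum over these finitely many denominators of $q\,\|q\alpha-\beta\|$ and is an explicit function of the bounded ``past'' data $q^{(N)}/q_{N}$, $q_{N-1}/q_{N}$ together with the ``future'' data $\alpha_{N+1},b_{N+1},b_{N+2},\dots$ alone. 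Formulae of this shape are implicit in work of Barnes and Komatsu and in \cite{cusick96:_halls_ray_in_inhom_dioph_approx}; extracting the precise form of $F_{N}$ that one needs is routine but must be done with care.

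With this reduction, Theorem~\ref{thm:1} becomes the assertion that, as $(b_{k})$ runs over all admissible sequences for the fixed $\alpha$, the numbers $\liminf_{N}F_{N}$ fill out an interval $[0,\mu_{\alpha}]$. I would prove this constructively: fix a small target $\mu>0$ and build $(b_{k})$ digit by digit so that $F_{N}\ge\mu$ for all large $N$ while $F_{N}\le\mu+\varepsilon_{N}$ for infinitely many $N$, with $\varepsilon_{N}\to0$. At stage $N$ the digits $b_{1},\dots,b_{N}$ are already committed, and one must choose $b_{N+1}\in\{0,\dots,a_{N+1}\}$ subject to admissibility; the governing local fact is that as $b_{N+1}$ increases the leading tail $T_{N}=\sum_{k>N}b_{k}\theta_{k-1}$ sweeps, in steps of $|\theta_{N}|$, across an interval of length $\approx|\theta_{N-1}|$, so that $F_{N}$ and $F_{N+1}$ can be nudged while remaining above $\mu$ and then brought back to a neighbourhood of $\mu$. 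One also needs $\mu_{\alpha}>0$, i.e.\ a nondegenerate interval: since $\alpha$ is irrational one can always keep $\beta$ a bounded (rescaled) distance from the ``critical'' configurations, forcing $\liminf_{N}F_{N}$ above a fixed positive constant, so that the set of attained values is a genuine interval rather than $\{0\}$.

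The execution splits according to the size of the partial quotients. Where $a_{N+1}$ is large, $b_{N+1}$ ranges over a long interval, $F_{N}$ is finely adjustable, and tracking the target is easy — the one delicate point being that the competing terms, in particular the one built from $q^{(N+1)}$ (which can be as small as a bounded multiple of $\eta_{N+1}$, hence small when $a_{N+2}$ is large), must also be kept $\ge\mu$; this constrains $b_{N+1}$ and the next few digits but still leaves room. The genuinely rigid case is a long block on which all the partial quotients are bounded, especially $a_{N+1}=a_{N+2}=\dots=1$: admissibility then forbids consecutive $1$'s among the $b_{k}$, the admissible digit blocks form a Cantor-type family, and no local fine adjustment of $F_{N}$ is available. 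On such a block the problem reduces, after rescaling, to exactly the phenomenon treated in \cite{cusick96:_halls_ray_in_inhom_dioph_approx}, and one shows — by a renormalisation argument in the spirit of Hall's and Freiman's treatment of the Lagrange spectrum — that suitable finite sums of copies of the attainable-value set contain intervals, which is what lets $\mu$ be re-attained on leaving the block. The flexible stretches are then used to absorb the discretisation error accumulated on the rigid ones, and the two kinds of stretch are concatenated.

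The main obstacle, and the point where this goes beyond \cite{cusick96:_halls_ray_in_inhom_dioph_approx}, is to carry all of this out uniformly over \emph{every} irrational $\alpha$ at once — above all over $\alpha$ with unbounded partial quotients, where the scale $|\theta_{N}|$ jumps by arbitrarily large factors between consecutive levels and $q^{(N)}/q_{N}$ varies over its whole range. One must rule out any configuration of the $a_{k}$ forcing $\liminf_{N}F_{N}$ to skip a fixed subinterval of $(0,\mu_{\alpha})$; equivalently, the gaps in the set of values swept by $F_{N}$ as the admissible digits vary must be bounded, uniformly in $\alpha$ and $N$, by a quantity tending to $0$ with $\mu$. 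Proving this uniform no-gap estimate — simultaneously reconciling the coarse rigid blocks, the arbitrarily large scale jumps, and the handful of competing neighbour denominators — is the technical heart of the argument; everything else is bookkeeping with Ostrowski expansions and the convergent recursions.
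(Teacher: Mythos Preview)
Your proposal is a strategy outline rather than a proof: you correctly identify the main ingredients (Ostrowski/Davenport expansion, the representation $\mathcal{M}_{+}(\alpha,\beta)=\liminf_{N}F_{N}$, the dichotomy between flexible and rigid stretches), but the decisive technical content --- what you yourself call the ``uniform no-gap estimate'' --- is asserted, not established. That estimate \emph{is} the theorem in the hard case, so as written there is a genuine gap.

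Your calibration of the difficulty is also inverted relative to the paper. You single out $\alpha$ with unbounded partial quotients as the main obstacle, but the paper disposes of that case quickly (Theorem~\ref{thm:unbounded}): along a subsequence $n_{k}$ with $a^{\sharp}_{n_{k}}\to\infty$ a unit change in $b^{\sharp}_{n_{k}}$ moves $\lambda^{\sharp}_{n_{k}}(\beta)$ by at most $1/(a^{\sharp}_{n_{k}}-1)$ while affecting distant $\lambda^{\sharp}_{m}$ negligibly (Lemma~\ref{lem1}), so one can steer the $\liminf$ to any target in $[0,\mathcal{M}_{+}(\alpha)]$; the long bounded stretches between the $n_{k}$ are simply left untouched and cause no trouble. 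The genuinely hard case is $\alpha$ with \emph{all} partial quotients bounded, where no stretch is flexible and your digit-by-digit tracking cannot reach an arbitrary target without an auxiliary structural fact. The paper handles this not by local tracking but by a global Cantor-set argument: it builds two Cantor-type sets $E(\alpha^{-},s)$ and $F(\alpha^{+}_{r+1},s)$ of admissible digit sequences, verifies Hall's gap condition for their dissections after taking logarithms (Lemmas~\ref{thm:abdoas} and~\ref{thm:dddasd}), concludes via Hall's theorem that the product $E\cdot F$ contains an interval $[P_{1},P_{2}]$ (Theorem~\ref{thm:7gadaa}), and shows (Theorem~\ref{thm:main_r_s}) that every $(e,f)\in E\times F$ is realised as $\mathcal{M}_{+}(\alpha,\beta)=efD^{+}_{r}/(1-\alpha^{-}\alpha^{+})$ for some $\beta$. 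Chaining these intervals over $r$ reaches down to $0$. Your phrase ``renormalisation argument in the spirit of Hall and Freiman'' gestures in the right direction, but the actual work --- checking Hall's Condition~1 for these specific dissections, which occupies several pages --- is precisely what is missing from your sketch.
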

Once this is established, it is straightforward to extend to the two-sided
case, and to binary quadratic forms. 

\subsection{History}

As far as we are aware, the first work  on inhomogeneous minima
dates back to Minkowski \cite{minkowski01:_ueber_annaeh_groes_zahlen}
who expressed his results in terms of  binary quadratic forms. He  showed  that
if $a,b,c,d$ are real numbers with $\Delta=ad-bc\neq 0$ then, for any
real numbers $\lambda$ and $\mu$, there are integers $m,n$ such that
\begin{equation*}
  |(am-bn-\lambda)(cm-dn-\mu)|\leq \frac{1}{4}\Delta.
\end{equation*}
This implies that $\inf_q|q|||q\alpha-\beta||\leq \frac{1}{4}$ for all
$\alpha,\beta$.  The same conclusion is true for
$\mathcal M(\alpha,\beta)$ but this requires more work. In
fact Khintchine \cite{khintchine35:_neuer_beweis_veral_hurwit_satzes}
proved that $\mathcal M_+(\alpha,\beta)\leq \frac{1}{3}$, and 
the result with $\frac{1}{4}$ replacing $\frac{1}{3}$
is claimed by Cassels as derivable from his methods in
\cite{cassels54:ueber}. 

Khintchine \cite{khintchine26:_ueber_klass_approx} showed that there
exists $\delta>0$ such that, for any $\alpha$, there exists $\beta$
for which
\begin{equation*}
  \mathcal M(\alpha,\beta)\geq \delta.
\end{equation*}
In fact, like Minkowski, he deals with the infimum rather than
$\liminf$.  Fukusawa gave an explicit value for $\delta$ of $1/457$
and this was subsequently improved by Davenport ($\delta=1/73.9$)
\cite{davenport50:_theor_khint} and
by  Prasad ($\delta=3/32$)~\cite{prasad51}.  These papers are of special
significance  because they develop a methodology for handling
calculations of values of $\mathcal M_+(\alpha,\beta)$ that has been
the cornerstone of   much subsequent work,  and underlies the
techniques  used in this paper. 

Far too many authors have contributed to the understanding of
${\mathcal M}(\alpha,\beta)$ and $\mathcal M_+(\alpha,\beta)$ for us
to reference all of the papers here.  As far as we are aware, the
first ray results occur in
\cite{fukasawa26:_ueber_groes_betrag_form_i}, Satz XIII, where it is
shown that if, in a semi-regular continued fraction expansion of
$\alpha$, the partial quotients tend to $\infty$ then ${\mathcal
  S}(\alpha)$ contains the interval $[0,\frac{1}{4}]$. Barnes obtains
essentially the same result in
\cite{barnes56:_linear_inhom_dioph_approx} though he states a weaker
one: that, for each $t\in [0,\frac{1}{4}]$, there are uncountably many
$\alpha$'s and $\beta$'s with ${\mathcal M}(\alpha,\beta)=t$.

The predominant methodology for handling problems of this kind, originating with Davenport
\cite{davenport50:_theor_khint},   invokes
some  form of continued fraction expansion of $\alpha$ and a corresponding
digit expansion of $\beta$. We will use this methodology but
choose to use the negative continued fraction
because of the simple and ``decimal''-like geometrical interpretation of the
expansion of $\beta$ associated with it (which we call the
\emph{Davenport  Expansion}). Use of the regular continued fraction is possible,  and
was first  done by Prasad~\cite{prasad51}, but makes the  construction less
intuitive and more complicated  from our perspective,   because divisions of subintervals alternate in
direction.  The general machinery for the regular continued fraction is
well-exposed in Rockett and Sz\"usz \cite{rockett92:_contin_fract}.
Cassels also uses the Davenport expansion ideas in his paper \cite{cassels54:ueber},
without attribution, where he shows that,  except for special cases, 
$\mathcal M_+(\alpha,\beta)\leq \frac{4}{11}$.  Several authors have
contributed to refinement of the technique, including S\'os
\cite{sos58:_dioph_approx_ii}, and Cusick, Rockett and Sz\"usz
\cite{cusick94:_inhom_dioph_approx}.  These authors ascribe the origin
of the technique to Cassels in \cite{cassels54:ueber}. 

Almost all of the work for this paper, including a more complicated
proof of the main theorem,  was done in the early 1990's, and
versions of it have been circulating privately since then. Its ideas and
results have been used and cited in various places, in particular, in
\cite{pinnner01:Moron,pinner01:_inhom_halls_ray_period}.

\section {The negative continued fraction expansion for $\alpha$}
\label{sec:23456}
Here we briefly describe the features needed from the theory of  the negative continued
fraction.  For a more complete discussion of the corresponding concepts for
the regular continued fraction, see \cite{rockett92:_contin_fract} or for the
more general semi-regular continued fraction see
Perron~\cite{perron54:_lehre_ketten}.  For $0<\alpha<1$,
let $\alpha_1=\alpha$, $a_1=\lceil\frac1{\alpha_1}\rceil$, and define,
recursively,
\begin{equation*}
a_i=\left\lceil\frac1{\alpha_i}\right\rceil\qquad
  \text{and}\qquad\alpha_{i+1}=a_i-\frac1{\alpha_i}. 
\end{equation*}
so that $a_i\ge 2$ and $0<\alpha_{i+1}<1$, for all $i$.
Evidently,  $\alpha$ has the continued fraction expansion
\begin{equation*}
  \label{eq:2}
\alpha=
\cfrac{1}{a_1-\cfrac{1}{a_2-\cfrac1{a_3-\cfrac{1}{\ddots}}}},
\end{equation*}
abbreviated as $\alpha=\langle a_1,a_2,a_3,\ldots\rangle$.  The numbers
$\alpha_i$ are called the $i$th \emph{complete quotients} of $\alpha$ and
satisfy
\begin{equation*}
  \label{eq:7}
\alpha_i=\langle a_i,a_{i+1},a_{i+2},\ldots\rangle.  
\end{equation*}
Since $\alpha$ is irrational, the \emph{partial quotients} $a_i$ are greater
than $2$ for infinitely many indices $i$, and so there  is a unique
sequence $a'_1,a'_2,a'_3,\ldots$ of positive integers such that
\begin{equation}
  \label{eq:8}
  a_1,a_2,a_3,\ldots=a'_1+1,\underbrace{2,\ldots,2}_{a'_2-1},
  a'_3+2,\underbrace{2,\ldots,2}_{a'_4-1},a'_5+2,\underbrace{2,\ldots,2}_{a'_6-1},
  a'_7+2,\ldots.
\end{equation}
It will be necessary occasionally to discuss the usual continued fraction
expansion of $\alpha$,  now expressible as
\begin{equation}
  \label{eq:9}
  \alpha=\cfrac{1}{a'_1+\cfrac{1}{a'_2+\cfrac{1}{a'_3+\cfrac{1}{\ddots}}}}.
\end{equation}
Eventually, we will split the proof of Theorem~\ref{thm:1} into two
cases, corresponding to whether or not the sequence  $(a_{n}')$ is bounded.  

We make use of the (negative continued
fraction) \emph{convergents} $p_i/q_i$ to $\alpha$:
\begin{equation}
  \label{eq:18}
  \frac{p_i}{q_i}=\langle a_1,a_2,\ldots,a_i\rangle,
\end{equation}
 satisfying the recurrence relations
\begin{equation}
  \label{eq:19}
  p_{i+1}=a_{i+1}p_i-p_{i-1}\qquad\text{and}\qquad
  q_{i+1}=a_{i+1}q_i-q_{i-1}
\end{equation}
where $i\ge1$ and $p_0,\ q_0=0,\ 1$. Easily established are the
following simple properties: 
\begin{align}
  \label{eq:21}
  1&=p_iq_{i-1}-q_ip_{i-1}\\
\label{eq:22}  
\alpha&=\frac{(a_i-\alpha_{i+1})p_{i-1}-p_{i-2}}{(a_i-\alpha_{i+1})q_{i-1}-q_{n-2}}
  =\frac{p_i-\alpha_{i+1}p_{i-1}}{q_i-\alpha_{i+1}q_{i-1}}.
\end{align}
Moreover, $q_{i-1}/q_i=\overline\alpha_i$ where
\begin{equation}
  \label{eq:24}
  \overline\alpha_i=\langle a_i,a_{i-1},\ldots,a_1\rangle.
\end{equation}
Since $q_0=1$,  the identity
\begin{equation}
  \label{eq:25}
  q_i=\frac1{\overline\alpha_1\overline\alpha_2\ldots\overline\alpha_i}
\end{equation}
follows. 

This section concludes with a brief description of  the \emph{Ostrowski expansion} (see
\cite{rockett92:_contin_fract}) 
for  positive integers.  Any given
integer $q\ge1$ can be written as a sum of the form
\begin{equation}
  \label{eq:27}
  q=\sum^n_{k=1}c_kq_{k-1}
\end{equation}
where
\begin{equation}
  \label{eq:28}
  c_n\ge1\qquad\text{and}\qquad 0\le c_k\le a_k-1\qquad
  \text{for}\qquad1\le k\le n.
\end{equation}
A greedy algorithm is used to determine the coefficients
$c_{n}$.

It is not hard to verify that
\begin{equation}
  \label{eq:31}
  q_k-1=(a_1-2)q_0+(a_2-2)q_1+\cdots
  +(a_{k-1}-2)q_{k-2}+(a_k-1)q_{k-1}.
\end{equation}
This last identity yields that, for
 no pair of indices $i$ and $j$, is 
there a consecutive subsequence  of coefficients of the form
\begin{equation}
  \label{eq:32}
(c_i,c_{i+1},\dots,c_j) =( a_i-1,a_{i+1}-2,a_{i+2}-2,\ldots,a_{j-1}-2,a_j-1).
\end{equation}
The basic facts about the Ostrowski expansion are described in the
following lemma.   
\begin{lem}
\label{thm:98983}
Each integer $q\ge1$ has a unique expansion of the form \eqref{eq:27}
such that the constraint \eqref{eq:28} holds and
no consecutive sub-sequence of coefficients  is of the form \eqref{eq:32}.
\end{lem}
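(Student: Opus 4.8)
\bigskip

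The plan is to derive existence from a greedy algorithm and uniqueness by showing that every admissible expansion must coincide with the greedy one; this is essentially the classical argument (cf.\ \cite{rockett92:_contin_fract}), adapted to the negative continued fraction. First note that $q_0=1$, $q_1=a_1\ge2$, and that \eqref{eq:19} together with $a_k\ge2$ gives, by induction, $q_{k+1}-q_k\ge q_k-q_{k-1}\ge1$; thus $(q_k)$ is strictly increasing and unbounded, so for each $q\ge1$ there is a unique $n$ with $q_{n-1}\le q<q_n$. I would set $c_n=\lfloor q/q_{n-1}\rfloor$, which satisfies $1\le c_n\le a_n-1$ (the upper bound since $q<q_n=a_nq_{n-1}-q_{n-2}<a_nq_{n-1}$), then replace $q$ by $q-c_nq_{n-1}<q_{n-1}$ and repeat, reading off $c_{n-1},c_{n-2},\dots$ in turn (some possibly zero) until the remainder drops below $q_0=1$, i.e.\ to $0$. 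This produces an expansion \eqref{eq:27} obeying \eqref{eq:28} with $c_n\ge1$, and the greedy invariant is that the remainder just before $c_k$ is chosen equals $\sum_{j=1}^{k}c_jq_{j-1}$ and is $<q_k$.

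Next I would verify that the greedy expansion contains no block of the form \eqref{eq:32}. The computational input, from \eqref{eq:19}, is that $(a_k-2)q_{k-1}=(q_k-q_{k-1})-(q_{k-1}-q_{k-2})$ and $(a_k-1)q_{k-1}=(q_k-q_{k-1})+q_{k-2}$; summing these along a putative block $(c_i,\dots,c_j)$ as in \eqref{eq:32} with $i<j$, the interior differences telescope and one obtains
\begin{equation*}
\sum_{k=i}^{j}c_kq_{k-1}=q_j+q_{i-2}\qquad(\text{with the convention }q_{-1}:=0).
\end{equation*}
But when the greedy algorithm selects $c_j$ the remaining amount is $\sum_{k=1}^{j}c_kq_{k-1}\ge\sum_{k=i}^{j}c_kq_{k-1}\ge q_j$, contradicting the invariant that this remainder is strictly below $q_j$. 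Hence the greedy expansion is admissible in the sense of the lemma, giving existence.

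Uniqueness rests on the bound: for every $m\ge1$, any $(c_1,\dots,c_m)$ with $0\le c_k\le a_k-1$ and no block \eqref{eq:32} has $\sum_{k=1}^{m}c_kq_{k-1}\le q_m-1$. I would prove this by induction on $m$, strengthened by a second clause: if moreover $(c_1,\dots,c_m)$ does not end in a \emph{saturated tail} --- i.e.\ is not of the form $c_i=a_i-1$, $c_{i+1}=a_{i+1}-2,\dots,c_m=a_m-2$ for some $i\le m$ --- then in fact $\sum_{k=1}^{m}c_kq_{k-1}\le q_m-q_{m-1}-1$. Splitting the inductive step according to whether the length-$m$ tail is saturated (a saturated tail forbids appending $c_{m+1}=a_{m+1}-1$, while an unsaturated one permits every value $0,\dots,a_{m+1}-1$), the two clauses feed one another through \eqref{eq:19}; this double induction is the technical heart of the argument.

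Granting the bound, let $q=\sum_{k=1}^{n}c_kq_{k-1}$ be any admissible expansion. From $c_n\ge1$ and the bound applied at level $n$ we get $q_{n-1}\le q<q_n$, which forces $n$ to be exactly the index isolated above; then $c_nq_{n-1}\le q\le c_nq_{n-1}+(q_{n-1}-1)<(c_n+1)q_{n-1}$ (the bound at level $n-1$) forces $c_n=\lfloor q/q_{n-1}\rfloor$, and repeating the argument on $q-c_nq_{n-1}$ forces $c_{n-1}$, then $c_{n-2}$, and so on. Thus every admissible expansion equals the greedy one, which completes the proof. I expect the double induction for the bound to be the one genuinely delicate point; an alternative is to count admissible length-$m$ sequences by a transfer matrix on the saturated/unsaturated states, obtaining the recurrence \eqref{eq:19} with the same initial data and hence the count $q_m$, and then to deduce the bijection with $\{0,\dots,q_m-1\}$ from injectivity of $(c_k)\mapsto\sum_kc_kq_{k-1}$.
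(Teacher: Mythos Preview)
Your proposal is correct. The paper does not actually supply a proof of this lemma: it is stated as one of ``the basic facts about the Ostrowski expansion,'' preceded only by the remarks that a greedy algorithm determines the $c_n$ and that identity~\eqref{eq:31} rules out blocks of the form~\eqref{eq:32}. Your argument is precisely a careful fleshing-out of those two hints---greedy existence, then the telescoping computation $\sum_{k=i}^{j}c_kq_{k-1}=q_j+q_{i-2}$ to exclude forbidden blocks, then the sharp bound $\sum_{k=1}^{m}c_kq_{k-1}\le q_m-1$ (which is exactly~\eqref{eq:31}) for uniqueness---so there is no methodological divergence to discuss. The double induction with the auxiliary ``unsaturated tail'' clause is the right way to make the bound rigorous, and your counting alternative via a two-state transfer matrix is a pleasant extra.
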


\section{ The Davenport expansion of $\beta$}
\label{sec:3gfd}
We now describe  the \emph{Davenport Expansion} for the elements
$\beta$ of the interval $[0,1)$.  While the expansion is analogous to that
used in \cite{cusick96:_halls_ray_in_inhom_dioph_approx}, we remind the reader
that it is based on a different continued fraction algorithm.  This approach
results in a ``decimal''-like geometry of the Davenport expansion in the
negative continued fraction case which makes  more intuitive  the invocation of Hall's
theorem on sums of Cantor
sets~\cite{m.47:_sum_and_produc_of_contin_fract} later. This is a key
component of the proof in the bounded case. 

For  $0\le\beta<1$, 
let $\beta_1=\beta$ and define, inductively,
\begin{equation*}
  \label{eq:42}
  b_i=\left\lfloor\frac{\beta_i}{\alpha_i}\right\rfloor\qquad\text{and}\qquad
  \beta_{i+1}=\frac{\beta_i}{\alpha_i}-b_i.
\end{equation*}
so that $0\le b_i\le a_i-1$ and $0\le\beta_{i+1}<1$.
The convergent sum $\beta=\sum_{k=1}^\infty b_kD_k$  is called 
the \emph{ Davenport expansion} of $\beta$ or the \emph{Davenport sum}
of the sequence $(b_k)$ relative to $\alpha$. The
integers $b_{i}$ are the \emph{Davenport coefficients}. In the same way as
in the decimal expansion $0.999\ldots$ is identified with
$1.000\ldots$, we  identify
\begin{equation}
  \label{eq:decimal_ambig}
  b_1, b_2, \ldots, b_i, a_i-1, a_{i+1}-2, a_{i+2}-2, \ldots, \text{
    with }  b_1, b_2, \ldots, b_i+1, 0,0, \ldots
\end{equation}
for $b_i<a_i-1$, since their Davenport sums are the same. 

Figure~\ref{fig:1} gives an illustration of the geometry of the
situation for the case when $\alpha=\langle
5,3,5,3,\ldots\rangle$.
The interval $[0,1)$ is
subdivided by the numbers $n\alpha \pmod 1$, $(n=1,2,3, 4)$ into $5$ intervals,
the first four of which are ``long'' and the last  ``short'' since $5\alpha>1$. When
we allow $n$ to range up to $13$, each long interval is then subdivided into $3$
intervals with the same pattern in each: $2$ ``long'' intervals and $1$ ``short''
interval, whereas the ``short'' interval is divided into just $1$ ``long''
interval and $1$ ``short'' interval.  This pattern of ``long'' and ``short''
intervals is repeated at finer and finer resolutions as $n$ increases,
reflecting, in this example,  the periodic structure of the continued fraction.  This
structure corresponds to a ``decimal'' expansion with restrictions on
digits, involving
dependencies on the preceding digits. The general case is described below.

\begin{figure}[ht!]
  \centering
\includegraphics[width=\textwidth]{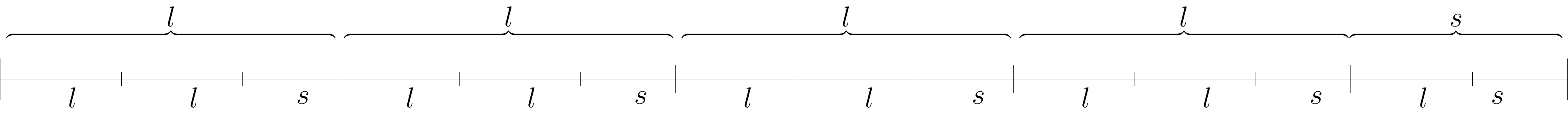}
  \caption{The ``Long-Short'' Picture for $\alpha=\langle 5,3,5,3,\dots\rangle$}
  \label{fig:1}
\end{figure}

From the inductive step in the Davenport expansion, 
\begin{equation*}
  \label{eq:43}
  \beta_i=b_i\alpha_i+\beta_{i+1}\alpha_i
\end{equation*}
and, as a result, 
\begin{equation}
  \label{eq:44}
  \beta_i=b_i\alpha_i+b_{i+1}\alpha_i\alpha_{i+1}+\ldots
  +b_j(\alpha_i\alpha_{i+1}\ldots\alpha_j)
  +\beta_{j+1}(\alpha_i\alpha_{i+1}\ldots\alpha_j)
\end{equation}
for all $j\ge i$. 
Note that $\beta_{i}$ is the location of $\beta$ in the rescaled copy
of the (long) interval in which it is contained.   
We define
\begin{equation*}
  \label{eq:45}
D_{1}=1,\qquad  D_i=\alpha_1\alpha_2\ldots\alpha_i
\end{equation*}
and write
\begin{equation*}
  \label{eq:46}
  \beta_iD_{i-1}=b_iD_i+b_{i+1}D_{i+1}+\cdots
  +b_jD_j+\beta_{j+1}D_j.
\end{equation*} $D_{i}$ is the length of the long intervals at the $i$th
level, and $D_{i}-D_{i+1}$ is the length of the short intervals at that level. 

The following result is straightforward. 
\begin{thm}
\label{thm:hdjd}
Let $\beta=\sum^\infty_{k=1}b_kD_k$ where
$(b_i)$ is a sequence of positive integers.
Then $0\le\beta<1$ and $(b_i)$ are  the Davenport coefficients of $\beta$
if and only if $b_i<a_i$ for all $i\ge1$ and 
no block of the  form
\begin{equation}
  \label{eq:69}
  a_i-1,a_{i+1}-2,a_{i+2}-2,\dots,a_{j-1}-2,a_j-1
\end{equation}
or of the form
\begin{equation}
  \label{eq:70}
  a_i-1,a_{i+1}-2,a_{i+2}-2,a_{i+3}-2,\ldots
\end{equation} occurs in $(b_i)$. 
\end{thm}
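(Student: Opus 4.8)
The plan is to reduce everything to two elementary facts about the numbers $D_i=\alpha_1\cdots\alpha_i$. First, from $\alpha_{i+1}=a_i-1/\alpha_i$ together with $D_i=\alpha_iD_{i-1}$ and $D_{i+1}=\alpha_{i+1}D_i$ one gets the recurrence $D_{i+1}=a_iD_i-D_{i-1}$, i.e. $D_{i-1}=a_iD_i-D_{i+1}$; iterating the resulting identity $D_j-D_{j+1}=(a_{j+1}-2)D_{j+1}+(D_{j+1}-D_{j+2})$ and using $D_j\to0$ gives the ``digit identities''
\[
D_i-D_{i+1}=\sum_{k\ge i+1}(a_k-2)D_k,\qquad D_{i-1}=(a_i-1)D_i+\sum_{k\ge i+1}(a_k-2)D_k .
\]
(That $D_j\to0$ follows since, by \eqref{eq:22}, $\alpha q_i-p_i=\alpha_{i+1}(\alpha q_{i-1}-p_{i-1})$ keeps a constant sign, so $D_{i+1}=\alpha q_i-p_i=q_i|\alpha-p_i/q_i|\to0$.) These identities immediately give the value of the two forbidden patterns: a block of shape \eqref{eq:69} occupying positions $i,\dots,j$ contributes exactly $D_{i-1}+D_{j+1}$ to a Davenport sum, while a block of shape \eqref{eq:70} starting at position $i$ contributes exactly $D_{i-1}$.

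For the ``only if'' direction I would iterate \eqref{eq:44} and divide by $D_{i-1}$ to get $\beta_iD_{i-1}=\sum_{k\ge i}b_kD_k$ (the remainder $\beta_{j+1}D_j\to0$). Writing $R_i:=\sum_{k\ge i}b_kD_k$, this says $R_i=\beta_iD_{i-1}\in[0,D_{i-1})$ for every $i$. Then $b_i=\lfloor\beta_i/\alpha_i\rfloor\le\beta_i/\alpha_i<1/\alpha_i<a_i$ since $\beta_i<1$; and if a forbidden block occupied positions $i,\dots,j$ (respectively, started at position $i$), then, since all $b_k\ge0$, we would get $R_i\ge D_{i-1}+D_{j+1}>D_{i-1}$ (respectively, $R_i\ge D_{i-1}$), contradicting $R_i<D_{i-1}$.

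For the ``if'' direction the whole matter comes down to one claim: \emph{if $0\le b_k\le a_k-1$ for all $k$ and no block of the form \eqref{eq:69} or \eqref{eq:70} occurs, then $R_i:=\sum_{k\ge i}b_kD_k<D_{i-1}$ for every $i\ge1$.} Granting it, $\beta:=\sum_k b_kD_k=R_1<D_0=1$ and $\beta\ge0$ is clear; running the Davenport algorithm on $\beta$ and arguing by induction on $i$, one gets $\beta_i=R_i/D_{i-1}\in[0,1)$ and $\lfloor\beta_i/\alpha_i\rfloor=\lfloor R_i/D_i\rfloor=b_i$ because $R_{i+1}/D_i<1$, so the Davenport coefficients of $\beta$ really are $(b_i)$.

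I expect this claim to be the only real obstacle; it is the exact real-number analogue of the uniqueness assertion in Lemma~\ref{thm:98983}. My plan is first to prove the non-strict bound $R_i\le D_{i-1}$, with equality forced to occur only when the tail $(b_i,b_{i+1},\dots)$ equals $(a_i-1,a_{i+1}-2,a_{i+2}-2,\dots)$; since that tail is the forbidden pattern \eqref{eq:70}, strictness follows. The non-strict bound expresses the fact that, subject to $b_k\le a_k-1$ and the no-forbidden-block rule, a legal tail from position $i$ is maximised (in the supremum sense) by taking $b_i=a_i-1$ and then the largest digits the rule still permits, namely $b_{i+1}=a_{i+1}-2,\ b_{i+2}=a_{i+2}-2,\dots$, whose value telescopes — via the digit identities above — exactly to $D_{i-1}$; truncating this pattern at any finite stage $N$ gives a legal tail of value $D_{i-1}-(D_N-D_{N+1})<D_{i-1}$, and the supremum is approached only by following the pattern forever. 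The delicate point is that the digit restrictions are non-local (a forbidden block spans many positions), so the induction — which I would run on the length of the partial sum $\sum_{k=i}^{N}b_kD_k$ — must carry extra bookkeeping recording whether the current position already lies inside a prefix $a_i-1,a_{i+1}-2,\dots$ of a forbidden block; concretely one proves simultaneously that $\sum_{k=i}^N b_kD_k<D_{i-1}$ and a sharper bound valid when $b_i=a_i-1$ (where the rule forces $b_{i+1}\le a_{i+1}-2$, which is exactly what recovers the extra $D_{i+1}$ of slack one needs), the base case being $b_iD_i\le(a_i-1)D_i<D_{i-1}$ since $a_i-1=\lfloor1/\alpha_i\rfloor<D_{i-1}/D_i$. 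An alternative route for $\beta$ of finite Davenport expansion is to write $\beta=\alpha Q-P$ with $Q=\sum_k b_kq_{k-1}$, read off the claim from the Ostrowski uniqueness of Lemma~\ref{thm:98983}, and pass to a limit for general $\beta$; but the self-contained induction seems cleaner.
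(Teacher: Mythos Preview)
Your proposal is correct. The paper does not actually prove this theorem: it simply declares the result ``straightforward'' and follows the statement with a paragraph of geometric intuition about long and short intervals, but gives no argument. So there is nothing to compare against; your write-up supplies what the paper omits.

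A couple of small confirmations. Your recurrence $D_{i-1}=a_iD_i-D_{i+1}$ and the telescoping identities
\[
D_i-D_{i+1}=\sum_{k\ge i+1}(a_k-2)D_k,\qquad D_{i-1}=(a_i-1)D_i+\sum_{k\ge i+1}(a_k-2)D_k
\]
are exactly right, and they do give the values $D_{i-1}+D_{j+1}$ and $D_{i-1}$ for the two forbidden blocks. Your ``only if'' argument via $R_i=\beta_iD_{i-1}\in[0,D_{i-1})$ is clean. For the ``if'' direction you have correctly isolated the one genuine point, namely $R_i<D_{i-1}$ under the digit constraints; your plan (prove $R_i\le D_{i-1}$ with equality exactly on the forbidden infinite tail \eqref{eq:70}) works, and the bookkeeping you describe is precisely what is needed: once $b_i=a_i-1$, the no-\eqref{eq:69} rule forces $b_{i+1}\le a_{i+1}-2$, and the requirement $R_i<D_{i-1}$ becomes $R_{i+1}<D_i-D_{i+1}$, which propagates with the same shape until some $b_k\le a_k-3$ breaks the prefix and lets you fall back on the coarser bound $R_{k+1}\le D_k$. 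The alternative you mention (reduce to Lemma~\ref{thm:98983} for finite expansions and pass to a limit) also works and is in the spirit of the paper's treatment of the Ostrowski expansion.

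One cosmetic point: the hypothesis ``sequence of positive integers'' in the statement is a slip for non-negative integers; you have read it correctly.
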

The exceptional cases in this result; when $b_i,b_{i+1},\ldots,b_j$ is
of the form $ a_i-1,a_{i+1}-2,a_{i+2}-2,\dots,a_{j-1}-2,a_j-1$,
correspond to the  missing long intervals in the short intervals one level
higher. As in the example in Figure~\ref{fig:1}, each short interval
has one fewer long interval at the next level. In the general
geometric picture, $a_{1}-1$ multiples of $\alpha$ subdivide the unit
interval into $a_{1}$ intervals, the first $a_{1}-1$ of which have
length $\alpha$ and the last of length $1-(a_{1}-1)\alpha$. The next
multiple (modulo 1) is $\alpha_{1}\alpha_{2}=a_{1}\alpha-1$. This
subdivides each of the long intervals at the previous level into
$a_{2}-2$ intervals of the same length followed by a short
interval. The final short interval of the initial subdivision is
subdivided into $a_{2}-2$ long intervals followed by a short
interval. This pattern is repeated at all finer resolutions with  the
appropriate partial quotients.

By means of the Davenport expansion, we can describe the integer pairs $(p,q)$
for which $0<q\alpha-p<1$.  It is straightforward to see that if
$q=\sum^n_{k=1}c_kq_{k-1}$ is the Ostrowski expansion of $q$ then
\begin{equation}
  \label{eq:72}
  p=\sum^n_{k=1}c_kp_{k-1},\quad i\ge 1.
\end{equation}

\begin{lem}
\label{thm:klksjd}
  \begin{enumerate}
\item Let $q\ge1$ be an integer with Ostrowski expansion as in
\eqref{eq:27} and let  
$p$ be defined by \eqref{eq:72}.  Then $0<q\alpha-p<1$ and
\begin{equation*}
  \label{eq:75}
q\alpha-p=\sum^\infty_{k=1}b_kD_k  
\end{equation*}
is the Davenport expansion of $q\alpha-p$,
where $(b_i)$ is the sequence
$c_1,c_2,\ldots,c_n,0,0,0,\ldots$.
  \item Let $0<\beta<1$ and let $(b_i)$ be the Davenport coefficients of $\beta$.
Then there are integers $q\ge1$ and $p$ such that $\beta=q\alpha-p$
if and only if there is $n\ge1$ such that $b_i=0$ for all $i>n$.
Further, if that is so then
$q=\sum^n_{k=1}b_kq_{k-1}$ and $p=\sum^n_{k=1}b_kp_{k-1}$.
  \end{enumerate}
\end{lem}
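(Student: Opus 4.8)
The plan is to deduce everything from one identity, $q_{k-1}\alpha-p_{k-1}=D_k$, combined with Theorem~\ref{thm:hdjd} and the uniqueness of the Ostrowski expansion (Lemma~\ref{thm:98983}). First I would prove the identity: rearranging \eqref{eq:22} gives $q_i\alpha-p_i=\alpha_{i+1}(q_{i-1}\alpha-p_{i-1})$, and since $q_0\alpha-p_0=\alpha=\alpha_1$, an immediate induction yields $q_i\alpha-p_i=\alpha_1\alpha_2\cdots\alpha_{i+1}=D_{i+1}$; in particular $q_{k-1}\alpha-p_{k-1}=D_k>0$ for all $k\ge 1$.

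For part (1), with $q=\sum_{k=1}^n c_kq_{k-1}$ and $p=\sum_{k=1}^n c_kp_{k-1}$ as in \eqref{eq:72}, linearity gives $q\alpha-p=\sum_{k=1}^n c_k(q_{k-1}\alpha-p_{k-1})=\sum_{k=1}^n c_kD_k$. It then suffices to verify that the sequence $(c_1,\dots,c_n,0,0,\dots)$ satisfies the hypotheses of Theorem~\ref{thm:hdjd}. The bound $c_i<a_i$ is part of \eqref{eq:28}. A forbidden block of the type \eqref{eq:69} lying entirely among the first $n$ coefficients is exactly a forbidden Ostrowski pattern \eqref{eq:32}, which is excluded by Lemma~\ref{thm:98983}; a block of type \eqref{eq:69} or \eqref{eq:70} that uses a coefficient from the zero-tail would force $a_j=1$ or $a_j=2$ for all large $j$ respectively, the former impossible outright and the latter impossible because $\alpha$ is irrational. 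Hence $\sum_{k=1}^n c_kD_k$ is a bona fide Davenport sum with coefficient sequence $(c_i)$, so it lies in $[0,1)$, and it is strictly positive because $c_n\ge 1$.

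For part (2), the ``if'' direction is essentially the computation above read backwards: if $b_i=0$ for $i>n$ (choosing $n$ with $b_n\ge 1$, which is possible since $\beta>0$), set $q=\sum_{k=1}^n b_kq_{k-1}\ge 1$ and $p=\sum_{k=1}^n b_kp_{k-1}$; then $q\alpha-p=\sum_{k=1}^n b_kD_k=\beta$, and by Lemma~\ref{thm:98983} this $(b_1,\dots,b_n)$ is in fact the Ostrowski expansion of $q$, consistent with (1). For ``only if'', suppose $\beta=q\alpha-p$ with $q\ge 1$. Write $q$ in its Ostrowski form $q=\sum_{k=1}^n c_kq_{k-1}$ and put $p'=\sum_{k=1}^n c_kp_{k-1}$; by (1), $q\alpha-p'\in(0,1)$ and has the terminating Davenport expansion $(c_1,\dots,c_n,0,\dots)$. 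Since $\beta=q\alpha-p$ and $q\alpha-p'$ both lie in $(0,1)$, their difference $p-p'$ is an integer of absolute value less than $1$, so $p=p'$; thus $\beta=q\alpha-p'$ and, by Theorem~\ref{thm:hdjd}, its Davenport coefficients are $(c_1,\dots,c_n,0,\dots)$, which terminates. The stated formulas for $q$ and $p$ then hold because the zero tail contributes nothing.

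The calculations are all routine. The only steps needing a little care are the block bookkeeping in (1) — verifying that padding a valid Ostrowski string with zeros cannot create a forbidden Davenport block, which is exactly where irrationality (equivalently, $a_j>2$ infinitely often) enters, together with the identification of the constraint \eqref{eq:32} with \eqref{eq:69} — and the integrality argument forcing $p=p'$ in the ``only if'' part of (2).
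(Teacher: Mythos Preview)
Your proof is correct. The paper states this lemma without proof, treating it as a routine consequence of the definitions, so there is no proof in the paper to compare against; your argument supplies exactly the expected details --- the identity $q_{k-1}\alpha-p_{k-1}=D_k$ from \eqref{eq:22}, the verification via Theorem~\ref{thm:hdjd} and Lemma~\ref{thm:98983}, and the integrality trick forcing $p=p'$ --- and the block bookkeeping (including the appeal to irrationality to rule out the infinite tail \eqref{eq:70}) is handled correctly.
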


\section{Calculation of ${\mathcal M}_{+}(\alpha,\beta)$ via the Davenport Expansion}
\label{sec:4gfala}
The Davenport expansion will be  used to calculate
${\mathcal M}_{+}(\alpha,\beta)$. Again we stress that the underlying ideas are not
really new, being essentially contained in the work of Davenport,
Cassels, S\'os,  and
others. Accordingly, we omit much of the justification and instead aim to provide
geometrical insights.

To begin, let $0\le\beta<1$ and let $(b_i)$ be the Davenport
coefficients of $\beta$.  We define
\begin{equation*}
  \label{eq:95}
  \begin{aligned}[t]
  Q_n&=\sum^n_{k=1}b_kq_{k-1}\\
  Q'_n&=
  \begin{cases}Q_n+q_{n-1}&\text{ if $Q_n<q_n-q_{n-1}$}\\
  Q_n+q_{n-1}-q_n&\text{ if $Q_n\ge q_n-q_{n-1}$}
  \end{cases}
 \end{aligned}
\end{equation*}
for all $n\ge1$.  The two cases here correspond to when $\beta$
lies in a long or a short interval, respectively, at the appropriate level of
the decomposition of the interval. If $\beta$ is in a short interval, then the
right endpoint of that interval occurred earlier in the decomposition; hence
the $q_{n}-q_{n-1}$ term.

The next two lemmas are relatively straightforward consequences of these
definitions and ideas. 
\begin{lem}\label{thm:5543}
  \begin{enumerate}
\item
$0\le Q_n<q_n$ for all $n\ge1$ and
$Q_n\ge q_{n-1}$ if and only if $b_n\ne0$.
\item
$Q_n\ge Q_{n-1}$ for all $n\ge2$ and
$Q_{n-1}=Q_n$ if and only if $b_n=0$.
\item
$0\le Q'_n<q_n$ for all $n\ge1$ and
$Q'_n\ge q_{n-1}$ if and only if $Q_n<q_n-q_{n-1}$.
\item
$Q'_n\ge Q'_{n-1}$ for all $n\ge2$ and
$Q'_{n-1}=Q'_n$ if and only if $Q_n\ge q_n-q_{n-1}$.
\item The inequality $Q_n\ge q_n-q_{n-1}$ holds if and only if
there is some index $m$ with $1\le m\le n$ such that
the sequence $b_m,b_{m+1},\ldots,b_n$ is equal to
\begin{equation*}
  \label{eq:102}
  a_m-1,a_{m+1}-2,a_{m+2}-2,\ldots,a_n-2.
\end{equation*}
  \end{enumerate}
\end{lem}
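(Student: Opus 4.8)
The plan is to prove parts~(1) and~(5) together by induction on $n$, and then deduce (2), (3) and (4) from them. The tools are the recurrence $Q_n=Q_{n-1}+b_nq_{n-1}$; the strict monotonicity $q_0<q_1<q_2<\cdots$, which follows from \eqref{eq:19} via $q_{n+1}-q_n=(a_{n+1}-1)q_n-q_{n-1}\ge q_n-q_{n-1}$ and $q_1-q_0=a_1-1\ge1$; and, writing $d_n:=q_n-q_{n-1}$, the identity $d_n=(a_n-2)q_{n-1}+d_{n-1}$ (with the convention $q_{-1}=0$), so that $d_n\ge1$ and $d_n$ is nondecreasing. Observe first that the lower halves of (1) and (2) are free once the upper bound $Q_n<q_n$ is known: $Q_n-Q_{n-1}=b_nq_{n-1}$ is a nonnegative multiple of a positive integer, vanishing exactly when $b_n=0$ and being $\ge q_{n-1}$ exactly when $b_n\ge1$.

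The only use of Theorem~\ref{thm:hdjd} is isolated in the claim: \emph{if $b_n=a_n-1$ then $Q_{n-1}<d_{n-1}$}. Granting (5) at level $n-1$, the alternative $Q_{n-1}\ge d_{n-1}$ would yield an index $m\le n-1$ with $(b_m,\dots,b_{n-1})=(a_m-1,a_{m+1}-2,\dots,a_{n-1}-2)$, whence $(b_m,\dots,b_n)$ would be a block of the shape \eqref{eq:69}, impossible among Davenport coefficients. Now the induction. The base case $n=1$ reads $Q_1=b_1\le a_1-1<a_1=q_1$, and (5) at $n=1$ is the tautology $Q_1\ge a_1-1\Leftrightarrow b_1=a_1-1$. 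For the step, split on the value of $b_n$. If $0\le b_n\le a_n-3$, then $Q_n<(b_n+1)q_{n-1}\le(a_n-2)q_{n-1}<d_n<q_n$ using $Q_{n-1}<q_{n-1}$, and no tail as in (5) can end at $n$, so both sides of (5) are false. If $b_n=a_n-2$, the identity for $d_n$ turns ``$Q_n\ge d_n$'' into ``$Q_{n-1}\ge d_{n-1}$'', which by (5) at level $n-1$ is the existence of a tail $(b_m,\dots,b_{n-1})=(a_m-1,\dots,a_{n-1}-2)$, equivalently (append $b_n=a_n-2$) of a tail ending at $n$; and $Q_n<(a_n-1)q_{n-1}<q_n$ again using $Q_{n-1}<q_{n-1}$. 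If $b_n=a_n-1$, use $(a_n-1)q_{n-1}=d_n+q_{n-2}$ from \eqref{eq:19}: then $Q_n=Q_{n-1}+d_n+q_{n-2}\ge d_n$ with the tail $m=n$ witnessing (5), while $Q_n<q_n$ is equivalent to $Q_{n-1}<q_{n-1}-q_{n-2}=d_{n-1}$, which is the isolated claim. This closes the induction.

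Parts (2)--(4) now follow formally. Part~(2) is the remark in the first paragraph. For (3), substitute the two clauses defining $Q'_n$: if $Q_n<d_n$ then $Q'_n=Q_n+q_{n-1}$ lies in $[q_{n-1},\,d_n+q_{n-1})=[q_{n-1},q_n)$, while if $Q_n\ge d_n$ then $Q'_n=Q_n-d_n$ lies in $[0,\,q_n-d_n)=[0,q_{n-1})$ by (1); this gives both $0\le Q'_n<q_n$ and the stated equivalence. For (4), check the three branches of $b_n$ together with the trichotomy behind (5). If $b_n=a_n-1$, then $Q_{n-1}<d_{n-1}$ and $Q_n\ge d_n$ by the above, so $Q'_{n-1}=Q_{n-1}+q_{n-2}$ and $Q'_n=Q_n-d_n=Q_{n-1}+(a_n-1)q_{n-1}-d_n=Q_{n-1}+q_{n-2}=Q'_{n-1}$. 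If $b_n=a_n-2$, then $Q_n\ge d_n\Leftrightarrow Q_{n-1}\ge d_{n-1}$, and one computes $Q'_n=Q'_{n-1}$ when both hold and $Q'_n-Q'_{n-1}=d_n>0$ when both fail. If $b_n\le a_n-3$, then $Q_n<d_n$, $Q'_n=Q_n+q_{n-1}$, and $Q'_n-Q'_{n-1}$ equals $(b_n+1)q_{n-1}-q_{n-2}\ge d_{n-1}>0$ or $(b_n+1)q_{n-1}+d_{n-1}>0$ according as $Q_{n-1}<d_{n-1}$ or not. In every case $Q'_n\ge Q'_{n-1}$, with equality precisely when $Q_n\ge d_n$, which is (4).

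The main obstacle is exactly the circularity flagged in the second paragraph: the bound $Q_n<q_n$ in (1) genuinely fails for the value $b_n=a_n-1$ unless one already knows the classification (5) at level $n-1$ and invokes the forbidden-block restriction of Theorem~\ref{thm:hdjd}, so (1) and (5) cannot be proved separately and must share one induction. The only further care is boundary bookkeeping — the convention $q_{-1}=0$ in the base step, and the fact that when $a_n=2$ the branch $b_n=a_n-2$ coincides with $b_n=0$, which is harmless since $a_n-2=0$ is then simply a legitimate final entry of a maximal tail in (5).
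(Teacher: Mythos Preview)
Your proof is correct. The paper itself omits any argument for this lemma, declaring it a ``relatively straightforward consequence'' of the definitions, so there is no detailed proof to compare against. Your joint induction on~(1) and~(5) is exactly the right organisation: the upper bound $Q_n<q_n$ genuinely requires the forbidden-block condition of Theorem~\ref{thm:hdjd} when $b_n=a_n-1$, and that condition only becomes usable once~(5) is available at level $n-1$, so the two statements cannot be decoupled. The remaining deductions of (2)--(4) from the trichotomy on $b_n$ and the identity $d_n=(a_n-2)q_{n-1}+d_{n-1}$ are clean and complete; in particular your case analysis for~(4) correctly shows $Q'_n=Q'_{n-1}$ precisely when $Q_n\ge d_n$, covering all combinations of the sign of $Q_{n-1}-d_{n-1}$ and the value of $b_n$.
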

The last condition, $Q_n\ge q_n-q_{n-1}$,
occurs if the point $\beta$ is inside  a
short interval. 

The integers $Q_n$ and $Q'_n$ are used to define quantities $\lambda_n(\beta)$
and $\rho_n(\beta)$, the significance of which will be evident from the
following lemma.
\begin{defn}
Let $0\le\beta<1$ and let $\beta_1,\beta_2,\beta_3,\ldots$ be the sequence
of numbers generated by applying the Davenport expansion algorithm to $\beta$.
We define
\begin{equation}
  \label{eq:103}
  \lambda_n(\beta)=Q_nD_n\beta_{n+1}
\end{equation}
and
\begin{equation}
  \label{eq:104}
  \rho_n(\beta)=
  \begin{cases}
  Q'_nD_n(1-\beta_{n+1})&\text{ if $Q_n<q_n-q_{n-1}$}\\    
  Q'_nD_n(1-\alpha_{n+1}-\beta_{n+1})&\text{ if $Q_n\ge q_n-q_{n-1}$}
  \end{cases}
\end{equation}
for all $n\ge1$.
\end{defn}
Recall that $Q_{n}$ is the ``count'' of $q\alpha$ that corresponds the left
endpoint of the interval at level $n$ that contains $\beta$, and that $D_{n}$
is the length of a long interval at that level. It follows that $D_n\beta_{n+1}$ is the
distance to $\beta$ from the left endpoint of the interval at level $n$
containing $\beta$. In similar vein, $\rho_{n}(\beta)$ is the count for the
right endpoint of that interval multiplied by the distance from $\beta$ to
that endpoint. The next lemma is straightforward from the geometrical
picture of the
interval decompositions.

\begin{lem}
\label{thm:6795}
Let $n<m$, $0<\beta<1$,
and $(b_i)$ be the Davenport coefficients of $\beta$, with
$b_i\ne 0$ for infinitely many $i$. Then
\begin{enumerate}
\item \begin{equation*}
  \label{eq:105}
  \lambda_n(\beta)=Q_n||Q_n\alpha-\beta||\qquad\text{and}\qquad
  \rho_n(\beta)=Q'_n||Q'_n\alpha-\beta||.
\end{equation*}
\item If $b_n=0$ then $\lambda_n(\beta)=\lambda_{n-1}(\beta)$. In other words
  if $\beta$ is in the first interval of the decomposition at level $n$, then
  $Q_{n}\alpha$ is $Q_{n+1}\alpha$ modulo 1.
\item
If $b_n\ne0$ and $b_m\ne0$ and
$b_i=0$ for all $i$ which satisfy $n<i<m$ then
$q_{n-1}D_m\le\lambda_n(\beta)<q_nD_{m-1}$.
\item If $Q_n\ge q_n-q_{n-1}$ then $\rho_n(\beta)=\rho_{n-1}(\beta)$. In other
  words, if $\beta$ is a short interval (namely a rightmost) at level $n$ then
  $Q_{n}'\alpha$ is equal to $Q_{n}\alpha$ modulo 1.
\item
If $Q_n<q_n-q_{n-1}$ and $Q_m<q_m-q_{m-1}$ and
$Q_i\ge q_i-q_{i-1}$ for all $i$ which satisfy $n<i<m$ then
$q_{n-1}D_m(1-\alpha_{m+1})\le\rho_n(\beta)<q_nD_{m-1}(1-\alpha_m)$
unless $m=n+1$ in which case
$q_{n-1}D_{n+1}(1-\alpha_{n+2})\le\rho_n(\beta)<q_nD_n$.
\end{enumerate}
\end{lem}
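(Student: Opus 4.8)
\medskip\noindent\emph{Plan of proof.} I would establish the five parts in the order (1), then (2) and (4) together, then (3) and (5) together, since each step uses its predecessors.

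For part (1), the plan is to locate the level-$n$ interval containing $\beta$ explicitly in terms of $Q_n$ and $Q_n'$. Put $P_n=\sum_{k=1}^n b_kp_{k-1}$. Applying Lemma~\ref{thm:klksjd} to the truncated coefficient sequence $b_1,\dots,b_n,0,0,\dots$ shows that $Q_n\alpha-P_n=\sum_{k=1}^n b_kD_k\in[0,1)$ is the left endpoint $L_n$ of that interval, while \eqref{eq:44} and \eqref{eq:46} (letting $j\to\infty$) give $\beta-L_n=\beta_{n+1}D_n$. Hence $Q_n\alpha-\beta=P_n-\beta_{n+1}D_n$ with $P_n\in\mathbf Z$ and $0\le\beta_{n+1}D_n<D_n\le\alpha_1<1$, so $\|Q_n\alpha-\beta\|$ equals $\beta_{n+1}D_n$ provided the latter is at most $\tfrac12$; if instead $\beta_{n+1}D_n>\tfrac12$ then $D_n>\tfrac12$ and no $b_k$ with $k\le n$ can be nonzero (otherwise $L_n\ge D_k\ge D_n>\tfrac12$, and the level-$n$ interval, whose length exceeds $\beta_{n+1}D_n>\tfrac12$, would protrude past $1$), whence $Q_n=0$ and the asserted identity is trivial. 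Thus $\lambda_n(\beta)=Q_nD_n\beta_{n+1}=Q_n\|Q_n\alpha-\beta\|$. For $\rho_n$ I would argue symmetrically, using the easily checked identity $q_k\alpha-p_k=D_{k+1}$ to see that $Q_n'\alpha$ (mod $1$) is the right endpoint $R_n$ of the same interval — equal to $L_n+D_n$ in the long case ($Q_n'=Q_n+q_{n-1}$) and $L_n+D_n-D_{n+1}$ in the short case ($Q_n'=Q_n+q_{n-1}-q_n$) — so that $R_n-\beta$ is exactly the bracketed factor of \eqref{eq:104} and $\rho_n(\beta)=Q_n'\|Q_n'\alpha-\beta\|$.

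Parts (2) and (4) are then direct computations. If $b_n=0$ then $Q_n=Q_{n-1}$ by Lemma~\ref{thm:5543}(2), while $\beta_n=\beta_{n+1}\alpha_n$ and $D_n=D_{n-1}\alpha_n$ give $D_n\beta_{n+1}=D_{n-1}\beta_n$; multiplying yields $\lambda_n=\lambda_{n-1}$. If $Q_n\ge q_n-q_{n-1}$ then Lemma~\ref{thm:5543}(4) gives $Q_n'=Q_{n-1}'$, and Lemma~\ref{thm:5543}(5) forces $b_n=a_n-1$ or $b_n=a_n-2$ according as $\beta$ is not, or is, also in a short interval at level $n-1$; in either case the elementary relation $\alpha_n\alpha_{n+1}=a_n\alpha_n-1$ turns the distance-to-right-endpoint factor at level $n-1$ into $\alpha_n$ times the one at level $n$, and since $D_{n-1}\alpha_n=D_n$ this gives $\rho_n=\rho_{n-1}$.

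Part (3) is a telescoping of (2): along $b_{n+1}=\dots=b_{m-1}=0$ we get $\lambda_n=\lambda_{m-1}=Q_nD_{m-1}\beta_m$ (and $Q_{m-1}=Q_n$). Since $b_n\ne0$, $Q_n\ge q_{n-1}$ by Lemma~\ref{thm:5543}(1); since $b_m\ne0$, $\beta_m\ge\alpha_m$ so $D_{m-1}\beta_m\ge D_m$; hence $\lambda_n\ge q_{n-1}D_m$. For the upper bound, $Q_n<q_n$ (Lemma~\ref{thm:5543}(1)) and $\beta_m<1$ give $\lambda_n<q_nD_{m-1}$. Part (5) telescopes (4): using Lemma~\ref{thm:5543}(4) along $Q_i\ge q_i-q_{i-1}$ for $n<i<m$ we get $\rho_n=\rho_{m-1}$ and $Q_{m-1}'=Q_n'=Q_n+q_{n-1}$. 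If $m>n+1$ then $Q_{m-1}\ge q_{m-1}-q_{m-2}$, so $\rho_n=(Q_n+q_{n-1})D_{m-1}(1-\alpha_m-\beta_m)$; the upper bound $\rho_n<q_nD_{m-1}(1-\alpha_m)$ follows from $Q_n+q_{n-1}<q_n$ (as $Q_n<q_n-q_{n-1}$) and $\beta_m>0$, and for the lower bound one shows $\beta_m<(a_m-2)\alpha_m$: since $\beta$ is in a short interval at level $m-1$ but a long one at level $m$, Lemma~\ref{thm:5543}(5) together with the exclusion of the blocks \eqref{eq:69}--\eqref{eq:70} (Theorem~\ref{thm:hdjd}) rules out $b_m=a_m-2$ and $b_m=a_m-1$, which also forces $a_m\ge3$ and hence $b_m\le a_m-3$; then $1-\alpha_m-\beta_m>1-(a_m-1)\alpha_m=\alpha_m(1-\alpha_{m+1})$ and $\rho_n\ge q_{n-1}D_m(1-\alpha_{m+1})$. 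When $m=n+1$ the chain is empty and $Q_{m-1}=Q_n<q_n-q_{n-1}$, so $\rho_n=(Q_n+q_{n-1})D_n(1-\beta_{n+1})$; here $\rho_n<q_nD_n$ is immediate, while $b_{n+1}\le a_{n+1}-2$ (otherwise $Q_{n+1}-(q_{n+1}-q_n)=Q_n+q_{n-1}>0$, contradicting that $\beta$ is long at level $n+1$) gives $\beta_{n+1}<(a_{n+1}-1)\alpha_{n+1}$ and hence $\rho_n>q_{n-1}D_{n+1}(1-\alpha_{n+2})$. I expect the bookkeeping in part (5) to be the main obstacle — in particular pinning down $b_m\le a_m-3$ (respectively $b_{n+1}\le a_{n+1}-2$), where the forbidden-block structure of the Davenport expansion genuinely enters, and correctly isolating the $m=n+1$ exceptional case; part (1)'s ``nearest integer'' claim also needs the small geometric argument above rather than being purely formal.
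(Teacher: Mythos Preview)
The paper gives no proof beyond the remark that the lemma ``is straightforward from the geometrical picture of the interval decompositions''; your proposal is precisely the algebraic realisation of that picture, and parts~(2)--(5) go through exactly as you describe.

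One small caveat on part~(1): the ``argue symmetrically'' step for $\rho_n$ does not quite go through, because $Q_n'\ge q_0=1$ can never vanish, so the escape hatch you used for $\lambda_n$ (forcing $Q_n=0$ when $\beta_{n+1}D_n>\tfrac12$) is unavailable on the $\rho$-side. Concretely, with $a_1=2$, $b_1=0$, $n=1$ and $\beta$ small one has $Q_1'=1$ and $(1-\beta_2)D_1>\tfrac12$, so $\|Q_1'\alpha-\beta\|=1-(1-\beta_2)D_1\ne(1-\beta_2)D_1$, and the asserted identity $\rho_1=Q_1'\|Q_1'\alpha-\beta\|$ fails. This is really a blemish in the lemma's statement rather than in your argument: once $D_n\le\tfrac12$ both distances to the endpoints are automatically at most $\tfrac12$, and the identities in~(1) hold without further work. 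Since every application in the paper (Lemma~\ref{thm:gda}, Theorem~\ref{thm:333}, Theorem~\ref{thm:98967}) concerns $n\to\infty$, nothing downstream is affected; you should simply note that~(1) holds for all $n$ with $D_n\le\tfrac12$, hence for all sufficiently large $n$.
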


The next lemma is a key step in calculating ${\mathcal M}_{+}(\alpha,\beta)$ in terms of
$\lambda_n(\beta)$ and $\rho_n(\beta)$. 
\begin{lem}
\label{thm:gda}
For $n\geq 1$,
\begin{equation}
  \label{eq:112}
  \min\{\lambda_n(\beta),\rho_n(\beta),\lambda_{n+1}(\beta),\rho_{n+1}(\beta)\}
\end{equation}
is a lower bound for the infimum of the set
$\{q||q\alpha-\beta||:\;q_n\le q<q_{n+1}\}$.
\end{lem}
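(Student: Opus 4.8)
The plan is to fix an integer $q$ with $q_n\le q<q_{n+1}$ and to prove the single inequality $q\|q\alpha-\beta\|\ge\min\{\lambda_n(\beta),\rho_n(\beta),\lambda_{n+1}(\beta),\rho_{n+1}(\beta)\}$; since this range of $q$ is finite, that is precisely the asserted lower bound on the infimum. I assume $\beta$ has infinitely many non-zero Davenport coefficients, as in Lemma~\ref{thm:6795} (otherwise all four quantities vanish for $n$ large, and for the remaining finitely many $n$ one argues exactly as below). Write $I_n\supseteq I_{n+1}$ for the level-$n$ and level-$(n+1)$ intervals, in the decomposition of $[0,1)$ of Section~\ref{sec:3gfd}, that contain $\beta$, and recall that the endpoints of the level-$k$ intervals are exactly the $q_k$ points $\{r\alpha\}:=r\alpha-\lfloor r\alpha\rfloor$ with $0\le r<q_k$, that the left and right endpoints of $I_k$ are $\{Q_k\alpha\}$ and $\{Q'_k\alpha\}$, and that a long level-$k$ interval has length $D_k$. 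Since $\alpha$ is irrational, $r\mapsto\{r\alpha\}$ is injective on $\{0,\dots,q_{n+1}-1\}$; and by Lemma~\ref{thm:klksjd}(2) together with our assumption, $\beta$ is not of the form $r\alpha-p$, hence is not an endpoint of any level interval, so lies in the interior of $I_{n+1}$. Finally $q\ge q_n>Q_n$ and $q\ge q_n>Q'_n$, by Lemma~\ref{thm:5543}(1) and (3).

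Suppose first that the shorter of the two circular arcs from $\{q\alpha\}$ to $\beta$ is not contained in $I_n$; this includes every $q$ for which $\{q\alpha\}$ lies outside the interior of $I_n$, and---when $I_n$ is a long interval occupying a substantial fraction of the circle, which can occur only for small $n$---possibly some $q$ with $\{q\alpha\}$ in the interior of $I_n$ as well. Being connected with one endpoint ($\beta$) in the interior of $I_n$, that arc meets the boundary of $I_n$, so it contains $\{Q_n\alpha\}$ or $\{Q'_n\alpha\}$. In the first case $\|q\alpha-\beta\|\ge\|Q_n\alpha-\beta\|$, so $q\|q\alpha-\beta\|>Q_n\|Q_n\alpha-\beta\|=\lambda_n(\beta)$ by Lemma~\ref{thm:6795}(1); in the second case $q\|q\alpha-\beta\|>\rho_n(\beta)$ similarly. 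Either way $q\|q\alpha-\beta\|>\min\{\lambda_n(\beta),\rho_n(\beta)\}$, which suffices.

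In the remaining case $\{q\alpha\}$ lies in the interior of $I_n$ and the shorter arc from $\{q\alpha\}$ to $\beta$ stays inside $I_n$. Refining $I_n$ to level $n+1$ splits it into $a_{n+1}-1$ long sub-intervals of common length $D_{n+1}$ followed by a short one---or into $a_{n+1}-2$ long ones and a short one when $I_n$ is itself short, a case which by Lemma~\ref{thm:5543}(5) corresponds to a terminal block $a_m-1,a_{m+1}-2,\dots,a_n-2$ in the coefficients of $\beta$. The left endpoints of these sub-intervals, taken in order, have counts $Q_n,Q_n+q_n,Q_n+2q_n,\dots$---the increment being $q_n$, which reflects $\{q_n\alpha\}=D_{n+1}$, the common length of the long sub-intervals---so from $\{q\alpha\}$ being an interior point together with $q>Q_n$ we get $q=Q_n+jq_n$ for some $j\ge1$. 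Here $I_{n+1}$ is the $b_{n+1}$-th of these sub-intervals, whence $Q_{n+1}=Q_n+b_{n+1}q_n$, and, when $I_{n+1}$ is long, $Q'_{n+1}=Q_n+(b_{n+1}+1)q_n$. Compare $j$ with $b_{n+1}$: if $j=b_{n+1}$ then $q=Q_{n+1}$, so $q\|q\alpha-\beta\|=\lambda_{n+1}(\beta)$ by Lemma~\ref{thm:6795}(1); if $j=b_{n+1}+1$, which forces $I_{n+1}$ long, then $\{q\alpha\}$ is the right endpoint of $I_{n+1}$, $q=Q'_{n+1}$, and $q\|q\alpha-\beta\|=\rho_{n+1}(\beta)$; if $j>b_{n+1}+1$ then again $I_{n+1}$ is long, $q>Q'_{n+1}$, and the arc reaches $\beta$ through the right endpoint $\{Q'_{n+1}\alpha\}$ of $I_{n+1}$, so $\|q\alpha-\beta\|\ge\|Q'_{n+1}\alpha-\beta\|$ and $q\|q\alpha-\beta\|>Q'_{n+1}\|Q'_{n+1}\alpha-\beta\|=\rho_{n+1}(\beta)$; and if $j<b_{n+1}$ then the arc first traverses the $b_{n+1}-j$ long sub-intervals indexed $j,\dots,b_{n+1}-1$ before entering $I_{n+1}$, so $\|q\alpha-\beta\|$ equals $(b_{n+1}-j)D_{n+1}+D_{n+1}\beta_{n+2}$, and substituting $q=Q_n+jq_n$ and simplifying (using $j\ge1$ and $\beta_{n+2}<1$) gives $q\|q\alpha-\beta\|>Q_{n+1}D_{n+1}\beta_{n+2}=\lambda_{n+1}(\beta)$. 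In every sub-case $q\|q\alpha-\beta\|\ge\min\{\lambda_{n+1}(\beta),\rho_{n+1}(\beta)\}$, which finishes the proof.

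The real content is the geometric bookkeeping in the third paragraph: deciding, for each $q$ in the range, which division point of $I_n$ the point $\{q\alpha\}$ is, and tracking the long/short dichotomy for both $I_n$ and $I_{n+1}$, since this changes the formula for $Q'_k$ and, via the forbidden blocks, the admissible values of $b_{n+1}$. The only genuine computation is the last sub-case, where the required inequality comes down to $q_n\beta_{n+2}<q_n\le Q_n+jq_n$. The one subtlety is that for small $n$ the interval $I_n$ can be large compared to the circle, so the shortest arc from $\{q\alpha\}$ to $\beta$ may wind around the far side; the first of the two cases is phrased precisely so as to absorb that. Everything used about the subdivisions---the count increment $q_n$, the long and short lengths $D_{n+1}$ and $D_{n+1}-D_{n+2}$, and the forbidden-block description of short intervals---is provided by Section~\ref{sec:3gfd} together with Lemmas~\ref{thm:5543} and \ref{thm:6795}.
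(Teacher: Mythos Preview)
Your proof is correct and follows essentially the same approach as the paper's sketch: split according to whether $\{q\alpha\}$ lies in $I_n$ (and whether the minimizing arc stays there), bounding by $\lambda_n$ or $\rho_n$ when it does not and by $\lambda_{n+1}$ or $\rho_{n+1}$ when it does. You have simply supplied the details the paper waves at---the identification of the interior subdivision points of $I_n$ as $Q_n+jq_n$, the case split on $j$ versus $b_{n+1}$, and the short computation $q_n\beta_{n+2}<q_n\le Q_n+jq_n$ for the $j<b_{n+1}$ case---all of which are exactly the ``straightforward size considerations'' the paper alludes to.
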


\begin{proof} We sketch the proof of the result. 
The diagram showing the key ideas is given in Figure~\ref{fig:2}. 
\begin{figure}[ht!]
  \includegraphics[width=0.9\textwidth]{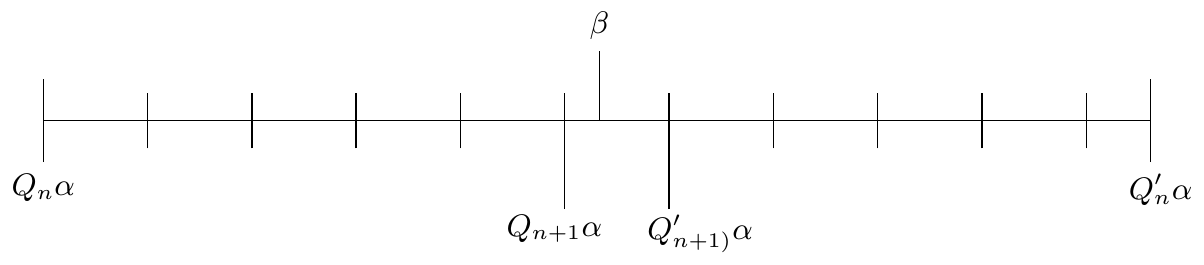}
  \caption{The Approximations of $\beta$ }
  \label{fig:2}
\end{figure}
Write $I_{n}$ and $I_{n+1}$ for the intervals prescribed by the
Davenport expansion at level $n$ and $n+1$ that contain $\beta$:
$I_{n}=[Q_{n}\alpha,Q_{n}'\alpha]$,
$I_{n+1}=[Q_{n+1}\alpha,Q_{n+1}'\alpha]$.  The obvious candidates for
the smallest values of $q||q\alpha-\beta||$ for $q_{n}\leq q\leq
q_{n+1}$ are the cases $q=Q_{n+1}$ or $q=Q_{n+1}'$ --- the left and
right endpoints of the interval $I_{n+1}$ at level $n+1$ containing
$\beta$. It is clear from fairly straightforward size considerations
that they do better than any $q\alpha \in I_{n}\ (q_{n} \leq q\leq
q_{n+1})$. It is also clear that the candidates $q=Q_{n}$ and
$q=Q'_{n}$ are better than any $q\alpha \not\in I_{n} (q_{n} \leq
q\leq q_{n+1})$ since $Q_{n}<q_{n}$.
\end{proof}

The key  equation for calculation of ${\mathcal M}_{+}(\alpha,\beta)$
is in the following theorem, which captures the important ingredient of the
preceding lemma.
\begin{thm}
\label{thm:333}
If $0<\beta<1$ and  no integers $q\ge1$ and $p$ satisfy
$\beta=q\alpha-p$ then
\begin{equation}
  \label{eq:143}
  {\mathcal M}_{+}(\alpha,\beta)=\min\left\{\liminf_{n\to\infty}\lambda_n(\beta),\;
  \liminf_{n\to\infty}\rho_n(\beta)\right\}.
\end{equation}
\end{thm}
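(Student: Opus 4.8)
The plan is to prove the two inequalities $\mathcal{M}_{+}(\alpha,\beta)\ge\min\{\liminf_n\lambda_n(\beta),\liminf_n\rho_n(\beta)\}$ and $\mathcal{M}_{+}(\alpha,\beta)\le\min\{\liminf_n\lambda_n(\beta),\liminf_n\rho_n(\beta)\}$ separately. At the outset I would record that, by Lemma~\ref{thm:klksjd}(2), the hypothesis that no $q\ge1$, $p$ satisfy $\beta=q\alpha-p$ is exactly the statement that the Davenport coefficients $(b_i)$ of $\beta$ are nonzero for infinitely many $i$; thus the standing hypotheses of Lemmas~\ref{thm:5543} and~\ref{thm:6795} hold, and I may freely use $q_n\to\infty$.

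For the lower bound I would decompose the positive integers $q\ge q_1$ into the consecutive blocks $[q_n,q_{n+1})$, noting that $n\to\infty$ as $q\to\infty$. Lemma~\ref{thm:gda} says that on the $n$th block the function $q\mapsto q||q\alpha-\beta||$ is bounded below by $m_n:=\min\{\lambda_n(\beta),\rho_n(\beta),\lambda_{n+1}(\beta),\rho_{n+1}(\beta)\}$. Hence $\inf_{q\ge q_N}q||q\alpha-\beta||\ge\inf_{n\ge N}m_n$ for all $N$, and letting $N\to\infty$ gives $\mathcal{M}_{+}(\alpha,\beta)\ge\liminf_n m_n$. It then remains to check the elementary identity $\liminf_n m_n=\min\{\liminf_n\lambda_n,\liminf_n\rho_n\}$: the inequality $\ge$ follows from $m_n\ge\min\{\inf_{k\ge n}\lambda_k,\inf_{k\ge n}\rho_k\}$, and $\le$ from feeding in a subsequence along which $\lambda_n$ (or $\rho_n$) tends to its $\liminf$.

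For the upper bound I would use Lemma~\ref{thm:6795}(1): $\lambda_n(\beta)=Q_n||Q_n\alpha-\beta||$ and $\rho_n(\beta)=Q'_n||Q'_n\alpha-\beta||$, so each $\lambda_n$ and each $\rho_n$ is of the form $q||q\alpha-\beta||$ for an explicit integer $q$. By Lemma~\ref{thm:6795}(2), $\lambda_n=\lambda_{n-1}$ whenever $b_n=0$, so $\liminf_n\lambda_n=\liminf_{n:\,b_n\ne0}\lambda_n$ (there being infinitely many such $n$); and for $n$ with $b_n\ne0$ one has $Q_n\ge q_{n-1}$ by Lemma~\ref{thm:5543}(1), so the integers $Q_n$ with $b_n\ne0$ form an unbounded set. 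Taking a subsequence of these indices along which $\lambda_n$ approaches $\liminf_n\lambda_n$ then shows $\mathcal{M}_{+}(\alpha,\beta)\le\liminf_n\lambda_n$. Symmetrically, Lemma~\ref{thm:6795}(4) gives $\rho_n=\rho_{n-1}$ whenever $Q_n\ge q_n-q_{n-1}$, so $\liminf_n\rho_n=\liminf_{n:\,Q_n<q_n-q_{n-1}}\rho_n$, and for such $n$ Lemma~\ref{thm:5543}(3) gives $Q'_n\ge q_{n-1}$, so the relevant $Q'_n$ are again unbounded and $\mathcal{M}_{+}(\alpha,\beta)\le\liminf_n\rho_n$. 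Together these give the upper bound and hence equality.

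The serious content is all contained in Lemma~\ref{thm:gda} (and behind it, the geometry of the Davenport decomposition); granting that lemma, the proof is bookkeeping. The two places I expect to need care are the $\liminf$ manipulation with the index shift in $m_n$, and confirming that the integer sequences $(Q_n)_{b_n\ne0}$ and $(Q'_n)_{Q_n<q_n-q_{n-1}}$ really do tend to infinity --- which is precisely where the hypothesis on $\beta$ (via the infinitude of nonzero $b_i$) and the divergence $q_n\to\infty$ enter. If that hypothesis fails, the Davenport expansion of $\beta$ is eventually $0$, only finitely many $\lambda_n,\rho_n$ carry information, and the displayed formula is not to be expected.
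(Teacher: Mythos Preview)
Your proposal is correct and is exactly the argument the paper has in mind: the paper does not spell out a proof of Theorem~\ref{thm:333} but simply presents it as ``captur[ing] the important ingredient'' of Lemma~\ref{thm:gda}, and your two-inequality argument via Lemma~\ref{thm:gda} for the lower bound and Lemma~\ref{thm:6795}(1) for the upper bound is the natural way to make this precise. One small point worth recording explicitly: for the $\rho$-side of the upper bound you need infinitely many $n$ with $Q_n<q_n-q_{n-1}$, and this uses not just that infinitely many $b_i$ are nonzero but also (via Lemma~\ref{thm:5543}(5) and Theorem~\ref{thm:hdjd}) that the Davenport coefficients of $\beta$ have no tail of the form $a_i-1,a_{i+1}-2,a_{i+2}-2,\ldots$, which again is equivalent to the hypothesis that $\beta\neq q\alpha-p$.
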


For completeness, we note that, in Theorem~\ref{thm:333}, we have not dealt with
the possibility that $\beta$ is of the form $q\alpha-p$ where $q$ and $p$ are
positive integers.  In this case, we have
\begin{equation}
  \label{eq:151}
  {\mathcal M}_{+}(\alpha,\beta)=\liminf_{q'\to\infty} q'||q'\alpha-q\alpha-p||
  =\liminf_{q'\to\infty} q'||(q'-q)\alpha||
\end{equation}
and consequently
\begin{equation}
  \label{eq:152}
  {\mathcal M}_{+}(\alpha,\beta)=\liminf_{q'\to\infty} (q'-q)||(q'-q)\alpha||={\mathcal M}_{+}(\alpha,0).
\end{equation}
The quantity ${\mathcal M}_{+}(\alpha,0)$ it is, of course, the homogeneous
approximation constant of $\alpha$.

\section{The Unbounded Case}
\label{sec:5adas}
In this section we dispense quickly and relatively straightforwardly with the
case where $\alpha$ has unbounded partial quotients ($a^\sharp_{n}$) in its ordinary
continued fraction, before turning to the much more difficult case of bounded
partial quotients.  We write
\begin{equation*}
  \label{eq:6}
\mathcal{M}_+ (\alpha)=\sup_{\beta} {\mathcal M}_{+}(\alpha,\beta).
\end{equation*}

The following theorem is the key result of this section. 
\begin{thm}
  \label{thm:unbounded}
  If $\alpha$ has unbounded partial quotients in its ordinary continued
  fraction then
  \begin{equation*}
    \label{eq:10}
\{\mathcal{M}_+(\alpha,\beta):\beta\in {\mathbf R} \}=[0,\mathcal{M}_+(\alpha)].    
  \end{equation*}
\end{thm}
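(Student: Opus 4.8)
The plan is to establish the two inclusions of $\{\mathcal{M}_+(\alpha,\beta):\beta\in\mathbf{R}\}=[0,\mathcal{M}_+(\alpha)]$ separately. The inclusion ``$\subseteq$'' is immediate from the definition of $\mathcal{M}_+(\alpha)$ and from $\mathcal{M}_+(\alpha,\beta)\ge0$; moreover Khintchine's upper bound $\mathcal{M}_+(\alpha,\beta)\le\frac13$ together with Khintchine's lower bound quoted in the introduction gives $0<\mathcal{M}_+(\alpha)\le\frac13$, so the target is a nondegenerate bounded interval. For ``$\supseteq$'' it suffices to realize each $t\in[0,\mathcal{M}_+(\alpha)]$ as $\mathcal{M}_+(\alpha,\beta)$ for some $\beta\in[0,1)$. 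The endpoint $t=0$ is realized by $\beta=0$: writing $q^\sharp_n$ for the $n$th convergent denominator of the ordinary continued fraction of $\alpha$ (see \eqref{eq:9}), one has $q^\sharp_n\|q^\sharp_n\alpha\|<q^\sharp_n/q^\sharp_{n+1}<1/a^\sharp_{n+1}$, which, since $(a^\sharp_n)$ is unbounded, is arbitrarily small for infinitely many $n$; hence $\mathcal{M}_+(\alpha,0)=\liminf_q q\|q\alpha\|=0$.

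For $t\in(0,\mathcal{M}_+(\alpha)]$ the plan is to build $\beta\in(0,1)$ through its Davenport expansion and to evaluate $\mathcal{M}_+(\alpha,\beta)$ by Theorem~\ref{thm:333}. The $\beta$ we construct has infinitely many nonzero Davenport coefficients, so by Lemma~\ref{thm:klksjd}(2) it is not of the exceptional form $q\alpha-p$ and Theorem~\ref{thm:333} applies; the task then reduces to arranging $\liminf_n\lambda_n(\beta)=t$ and $\liminf_n\rho_n(\beta)\ge t$. The hypothesis that the ordinary partial quotients are unbounded means, via \eqref{eq:8}, that the negative continued fraction of $\alpha$ has infinitely many ``wide windows'': either isolated large partial quotients $a_i$, or long runs of $2$'s. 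Across such a window a long block of consecutive Davenport digits is essentially free, so the rescaled position $\beta_{m+1}$ at a level $m$ in the window can be prescribed to (almost) any value in $[0,1)$, with error shrinking as the window widens. The construction is then: fix a \emph{witness} $\beta^\ast$ with $\mathcal{M}_+(\alpha,\beta^\ast)>t$ (when $t<\mathcal{M}_+(\alpha)$), or a sequence $\beta^{(k)}$ with $\mathcal{M}_+(\alpha,\beta^{(k)})\nearrow\mathcal{M}_+(\alpha)$ (when $t=\mathcal{M}_+(\alpha)$); let $\beta$ copy the Davenport digits of the witness (those of $\beta^{(k)}$ on the $k$th stretch, in the second case) except in one window from each wide family, where we \emph{re-tune}: using $\lambda_m(\beta)=Q_mD_m\beta_{m+1}$ from \eqref{eq:103} --- and using the width of the window both to keep $Q_mD_m>t$ and to realize the required $\beta_{m+1}\in(0,1)$ --- we force $\lambda_m(\beta)=t$ exactly at a chosen level $m$. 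The copied digits keep every other $\lambda_n$ and $\rho_n$ at least $\mathcal{M}_+(\alpha,\beta^\ast)-o(1)$ (here one uses that $\mathcal{M}_+(\alpha,\beta^\ast)$ is itself a $\liminf$, and Lemma~\ref{thm:6795}(2)--(5) to bound how quickly the altered count $Q_n$ re-stabilizes after each window). Then $\liminf_n\lambda_n(\beta)=t$ and $\liminf_n\rho_n(\beta)\ge t$, so $\mathcal{M}_+(\alpha,\beta)=t$ by Theorem~\ref{thm:333}. When $t=\mathcal{M}_+(\alpha)$ the re-tuning is replaced by splicing the $\beta^{(k)}$ across the windows, with stretch $k$ taken long enough that the tail values of $\beta^{(k)}$ govern it; then both liminfs equal $\sup_k\mathcal{M}_+(\alpha,\beta^{(k)})=\mathcal{M}_+(\alpha)$, which in particular shows the supremum is attained.

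The main obstacle is exactly the estimate at and just beyond each re-tuning (or splice) window: one must check that driving $\lambda_m(\beta)$ down to $t$ does not pull a neighbouring $\lambda_n$ or $\rho_n$ below $t$, and that the disturbed digit-history re-stabilizes fast enough for the copied digits of the witness to again deliver values $\ge\mathcal{M}_+(\alpha,\beta^\ast)-o(1)$. This is where the unboundedness of $(a^\sharp_n)$ does the work: a sufficiently wide window simultaneously supplies the range of rescaled positions needed to land precisely at $t$ and absorbs the transient, and quantifying ``wide enough'' through the two-sided bounds of Lemma~\ref{thm:6795}(3),(5) is the one genuine calculation. The remaining points are bookkeeping --- the exceptional case of \eqref{eq:151} is avoided as noted, and $\mathcal{M}_+(\alpha,0)=0<\mathcal{M}_+(\alpha)$ means the homogeneous value never interferes with realizing the top of the interval.
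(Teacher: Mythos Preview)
Your overall strategy --- start from a witness $\beta^\ast$ with $\mathcal{M}_+(\alpha,\beta^\ast)>t$ and perturb its Davenport coefficients at a sparse sequence of places so as to pull $\liminf$ down to exactly $t$ --- is precisely the paper's idea. The substantive difference is the choice of continued fraction. The paper deliberately abandons the negative continued fraction for this theorem and works with the \emph{ordinary} Davenport expansion (the $\sharp$-quantities), so that ``unbounded partial quotients'' gives, directly, infinitely many indices $n_k$ with $a^\sharp_{n_k}\to\infty$; a single-digit change at $n_k$ then moves $\lambda^\sharp_{n_k}$ by at most $1/(a^\sharp_{n_k}-1)$, and localisation (your ``re-stabilisation'') is handled by the short Lemma~\ref{lem1}. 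This uniform treatment is exactly what the paper means by ``we avoid the problems of long sequences of $2$'s in the negative continued fraction''. Your route via Theorem~\ref{thm:333} can be made to work, but it splits into two genuinely different sub-cases (large $a_i$ versus long runs of $2$'s), and your phrase ``a long block of consecutive Davenport digits is essentially free'' is not accurate in the second: inside a run of $2$'s the forbidden blocks \eqref{eq:69} force at most one digit equal to $1$, so the freedom is in \emph{where} that $1$ goes rather than in the digits themselves. That still yields granularity $\sim 1/L$ in $\beta_{m+1}$ (because $D_{i+k}/D_{i-1}\approx (L-k)/(L+1)$ across a run of length $L$), so the plan survives, but it is real extra work that the paper's switch to the ordinary expansion sidesteps.

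Two smaller points. First, you cannot force $\lambda_m(\beta)=t$ \emph{exactly} at each window while also copying the witness thereafter: $\lambda_m$ depends on all of $b_{m+1},b_{m+2},\dots$, so the re-tuning only gives $\lambda_m(\beta)=t+o(1)$ along the chosen subsequence, which is what you actually need. Second, for the top endpoint $t=\mathcal{M}_+(\alpha)$ the splicing construction is unnecessary: $\beta\mapsto\mathcal{M}_+(\alpha,\beta)$ is a $\liminf$ of continuous functions of $\beta$, hence upper semicontinuous on the compact set $[0,1]$, so the supremum is attained. The paper itself only argues the open interval $(0,\mathcal{M}_+(\alpha))$ explicitly, so your care about the endpoint is welcome --- just replace the splice by this one-line remark.
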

In this case, we avoid the problems of long sequences of $2$s in the
negative continued fraction by making use of the Davenport expansion
of $\beta$ with respect to $\alpha$ using the ordinary continued
fraction. This theory is described in Rockett and
Sz\"usz~\cite{rockett92:_contin_fract} with a different notation. The
notation we use is largely that of Cassels~\cite{cassels54:ueber} with
$\sharp$ appended to indicate use of the ordinary continued fraction
but with $D^{\sharp}_n$ denoting the quantity he refers to as
$\epsilon_n$. 

Note that, when $\beta=n\alpha+m$ for $n$ and $m$ integers,
$\mathcal M(\alpha,\beta)=0$.  Accordingly, we restrict attention to $\beta$ not of
this form. 

Set $\alpha=[0;a^{\sharp}_1,a^{\sharp}_2,...]$ and let $(n_k)$ be a sequence of indices on which
the partial quotients are strictly monotonically increasing. Now let
$0<c<\mathcal{M}_+ (\alpha)$ and choose $\beta$ for which
$c<\mathcal{M}_+(\alpha,\beta)\le \mathcal{M}_+(\alpha)$. Let its Davenport
coefficients be $(b^{\sharp}_j)$ in the ordinary continued
fraction. We will construct a sequence $(c^{\sharp}_j)$ so that $c^{\sharp}_j=b^{\sharp}_j$ except on a
subsequence of the $n_k$ which will be chosen sufficiently sparse for our
purposes.

Since $\beta=\sum_{k=1}^\infty b^{\sharp}_k D^{\sharp}_k$, where
$D^{\sharp}_k=q^{\sharp}_{k-1}\alpha-p^{\sharp}_{k-1}$, we put
$$\lambda^{\sharp}_n(\beta)=Q^{\sharp}_n\|Q^{\sharp}_n\alpha-\beta\|$$
The ordinary case of (\ref{eq:143}) (see\cite{cassels54:ueber} or
\cite{rockett92:_contin_fract}) gives

\begin{multline}\label{eq:words}
\lambda^{\sharp}_n(\beta)=(\sum_{k=1}^n b^{\sharp}_k q^{\sharp}_{k-1})|\sum_{k={n+1}}^\infty b^{\sharp}_k D^{\sharp}_k|\\
=q^{\sharp}_{n}|D^{\sharp}_n|(b^{\sharp}_n\frac{q^{\sharp}_{n-1}}{q^{\sharp}_n}+b^{\sharp}_{n-1}\frac{q^{\sharp}_{n-2}}{q^{\sharp}_{n-1}}\frac{q^{\sharp}_{n-1}}{q^{\sharp}_n}+...)
|b^{\sharp}_{n+1}\frac{D^{\sharp}_{n+1}}{D^{\sharp}_n}+b^{\sharp}_{n+2}\frac{D^{\sharp}_{n+2}}{D^{\sharp}_n}+...|
\end{multline}

Note that
$q^{\sharp}_n|D^{\sharp}_n|=[a^{\sharp}_n,a^{\sharp}_{n+1},...]/[a^{\sharp}_n,a^{\sharp}_{n-1},a^{\sharp}_{n-2},...,a^{\sharp}_2,a^{\sharp}_1]$,
and so is absolutely bounded above and away from zero. For this choice
of $\beta$, this product is always at least $1/30$ and so the second
two terms in the product are each at least $1/60$. Changing the value
of $b^{\sharp}_n$ by 1 will change the value of $\lambda^{\sharp}_{n}$
by at most $1/(a^{\sharp}_{n}-1)$, so by choosing $n=n_k$ and
adjusting the value of $b^{\sharp}_{n_{k}}$ to $c_{n_{k}}$, we replace
$\beta$ by $\widetilde{\beta}$, so that
\begin{equation*}
  \label{eq:11}
c<\min(\lambda_{n}(\widetilde{\beta}),\lambda_{n-1}(\widetilde{\beta}))<c+2/a^{\sharp}_n.\end{equation*}

By making this change at the indices $n_{k}$ (so that
$a^{\sharp}_{n_{k}}\to\infty$), and putting
$c^{\sharp}_n=b^{\sharp}_n$ elsewhere, we obtain a number
$\gamma=\sum_k c^{\sharp}_k D^{\sharp}_k$ for which
\begin{equation*}
  \label{eq:13}
\mathcal{M}_+(\alpha,\gamma)=c,  
\end{equation*}
since the effect of these changes for other $\lambda_n$ is smaller than
that  at $n=n_k$ or $n=n_{k-1}$. In fact we have:
\begin{lem}\label{lem1} 
  Let $\beta$ have Davenport coefficients $(b^{\sharp}_i)$ in the
  ordinary continued fraction. Given any
  $\epsilon>0$ and $k$ sufficiently large, there is an $M=M(k)<k/2$ and
  $N=N(k)$ such that if $m\not\in (k-M,k+N)$ then any change in $b^{\sharp}_k$ will
  not change $\lambda^{\sharp}_m(\beta)$ or $\rho^{\sharp}_{m}(\beta)$ by more than $\epsilon$.
\end{lem}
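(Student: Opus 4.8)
The plan is to control how a single coefficient $b^\sharp_k$ influences each $\lambda^\sharp_m(\beta)$ and $\rho^\sharp_m(\beta)$ by splitting into the cases $m<k$, $m>k$, and $m$ near $k$. The starting point is the product formula \eqref{eq:words}: $\lambda^\sharp_m(\beta)=q^\sharp_m|D^\sharp_m|\cdot A_m\cdot B_m$, where $A_m=\sum_{k\le m} b^\sharp_k q^\sharp_{k-1}/q^\sharp_m$ is a ``tail toward the left'' built from coefficients with index $\le m$, and $B_m=\sum_{k>m} b^\sharp_k D^\sharp_k/D^\sharp_m$ is a ``tail toward the right'' built from coefficients with index $>m$. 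The factor $q^\sharp_m|D^\sharp_m|$ is universally bounded above (by $1$, in fact) and below (away from $0$, depending only on a uniform bound $\le 1$ since the relevant partial quotients matter geometrically), so the whole analysis reduces to estimating how changing $b^\sharp_k$ perturbs $A_m$ and $B_m$. An entirely parallel decomposition handles $\rho^\sharp_m$.

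First I would treat the case $m>k$: then $b^\sharp_k$ appears only inside $A_m$, contributing the term $b^\sharp_k q^\sharp_{k-1}/q^\sharp_m$. Since $q^\sharp_{k-1}/q^\sharp_m$ decays geometrically in $m-k$ (ratios $q^\sharp_{j-1}/q^\sharp_j$ are bounded by a constant $<1$, e.g. by the golden ratio bound $q^\sharp_j\ge F_j$), and $b^\sharp_k\le a^\sharp_k$ is controlled, a change of $b^\sharp_k$ by any admissible amount moves $A_m$—hence $\lambda^\sharp_m$—by an amount that is exponentially small in $m-k$. So there is $N=N(k)$ (growing only logarithmically in $1/\epsilon$, hence certainly $<k/2$ for $k$ large, though the statement does not even require that of $N$) beyond which the perturbation is below $\epsilon$. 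Symmetrically, for $m<k$, $b^\sharp_k$ appears only inside $B_m$ via $b^\sharp_k D^\sharp_k/D^\sharp_m$; since $|D^\sharp_k/D^\sharp_m|=\prod_{j=m+1}^{k}|D^\sharp_j/D^\sharp_{j-1}|$ decays geometrically in $k-m$ and each factor is $<1$, the perturbation to $B_m$ is exponentially small in $k-m$. Choosing $M=M(k)$ to be, say, $\min(\lceil C\log(1/\epsilon)\rceil, \lfloor k/2\rfloor)$ makes every $\lambda^\sharp_m$ with $m<k-M$ move by less than $\epsilon$; the requirement $M<k/2$ is then automatic and is exactly what is needed so that the simultaneous adjustments at a sparse subsequence of indices (as performed in the construction preceding the lemma) do not interfere.

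The main obstacle is the bookkeeping for indices $m$ close to $k$, i.e. $m\in(k-M,k+N)$, which the lemma simply excludes; so strictly speaking there is no obstacle there, but one must be careful that the excluded window is genuinely finite and of the claimed size. The real technical care is in pinning down the geometric-decay constants uniformly: one needs that $q^\sharp_{j-1}/q^\sharp_j\le\theta<1$ and $|D^\sharp_j/D^\sharp_{j-1}|\le\theta'<1$ with $\theta,\theta'$ absolute, which follows from $q^\sharp_j=a^\sharp_j q^\sharp_{j-1}+q^\sharp_{j-2}\ge q^\sharp_{j-1}+q^\sharp_{j-2}$ and from $|D^\sharp_j|=|D^\sharp_{j-2}|-a^\sharp_j|D^\sharp_{j-1}|$ together with $|D^\sharp_j|<|D^\sharp_{j-1}|$, so both ratios are at most $(\sqrt5-1)/2$. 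Given these, the estimates above are immediate and the number of affected indices is $O(\log(1/\epsilon))$ in each direction, independent of $k$ up to the truncation $M\le k/2$. The only place the coefficients $b^\sharp_j$ enter is through the bound $b^\sharp_j\le a^\sharp_j$, and since each $a^\sharp_j$ multiplies a quantity already exponentially small in the distance $|m-j|$, the sum over all $j$ in a tail converges geometrically and the per-index estimate dominates.

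Finally, I would note that the same argument applies verbatim to $\rho^\sharp_m(\beta)$: by the ordinary-continued-fraction analogue of \eqref{eq:104}, $\rho^\sharp_m$ differs from $\lambda^\sharp_m$ only by replacing the ``forward tail'' $\beta_{m+1}$-type factor with its complement (a $1-\beta_{m+1}$ or $1-\alpha_{m+1}-\beta_{m+1}$ factor) and using $Q'^\sharp_m$ in place of $Q^\sharp_m$; neither change affects which coefficients $b^\sharp_k$ enter nor the geometric-decay rates, so the identical choice of $M(k)$ and $N(k)$ works for $\rho^\sharp_m$ as well. Taking the maximum of the two window sizes (still $O(\log(1/\epsilon))$ and still $<k/2$ for large $k$) completes the proof.
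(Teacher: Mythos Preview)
Your approach is sound and matches the paper's in its essential structure: both start from the product formula \eqref{eq:words} and exploit geometric decay of the ratios $q^\sharp_{j-1}/q^\sharp_j$ and $|D^\sharp_j/D^\sharp_{j-1}|$ to bound the effect of perturbing $b^\sharp_k$ on $\lambda^\sharp_m$ (and, mutatis mutandis, $\rho^\sharp_m$) once $|m-k|$ is large. One correction is needed, however: your claim that each single ratio is at most $(\sqrt5-1)/2$ is false in general---if $a^\sharp_{j-1}$ is large and $a^\sharp_j=1$ then $q^\sharp_{j-1}/q^\sharp_j$ can be arbitrarily close to $1$. The fix is immediate: the \emph{two-step} bounds $q^\sharp_{j-2}/q^\sharp_j\le \tfrac12$ and $|D^\sharp_{j+1}|/|D^\sharp_{j-1}|\le \tfrac12$ hold unconditionally (from $q^\sharp_j\ge q^\sharp_{j-1}+q^\sharp_{j-2}\ge 2q^\sharp_{j-2}$ and the mirror recursion $|D^\sharp_{j-1}|=a^\sharp_j|D^\sharp_j|+|D^\sharp_{j+1}|$), so products over $|m-k|$ indices still decay like $2^{-|m-k|/2}$ and your window estimate $M,N=O(\log(1/\epsilon))$ survives. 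Relatedly, since a change in $b^\sharp_k$ can be as large as $a^\sharp_k$ (which is unbounded here), you should absorb that factor explicitly via $a^\sharp_k\,q^\sharp_{k-1}\le q^\sharp_k$ and $a^\sharp_k|D^\sharp_k|\le |D^\sharp_{k-1}|$ before invoking the decay; you gesture at this but it deserves to be made precise.

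The paper's own proof phrases the decay differently: it appeals to the unboundedness hypothesis to place sufficiently many partial quotients larger than $2$ in the ranges $[k/2,k)$ and $(k,k+N]$, each such index contributing a factor $\le\tfrac12$ to the relevant product. Your route via the universal two-step bound is arguably cleaner, since it does not rely on how such large partial quotients are distributed and yields the same $O(\log(1/\epsilon))$ window with no hypothesis on $\alpha$ beyond irrationality.
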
 

\begin{proof} This follows quickly by (\ref{eq:words}), since $\alpha$ must have
  infinitely many partial quotients in its continued fraction expansion which
  are larger than $2$. If $k$ is sufficiently large then there are at least
  $-\log {\epsilon}/\log 2$ such terms $a^{\sharp}_n$ in $n\in [k/2,k)$ and at least
  $-\log {\epsilon}/\log 2$ such terms $a^{\sharp}_n$ in $(k,k+N]$. Consequently any
  change in $b_k$ will make a variation in the value of $\lambda^{\sharp}_m(\beta)$ and
  $\rho^{\sharp}_{m}(\beta)$ less than $\epsilon$.

   We now choose a sequence of the $n_k$ which are sufficiently sparse that
   these intervals do not overlap. Choose $c_{n_k}$ so that
   \begin{equation*}
     \label{eq:14}
     c<\min_{j\in[n_k-M(n_k),n_k+N(n_k]}(min(\lambda^{\sharp}_{j}(\gamma),{\rho^{\sharp}_j}(\gamma))< c+2/a^{\sharp}_{n_k}.      
   \end{equation*}
   This is clearly possible using the fact that changing $b_k$ by 1 increases
   or decreases the expression in (\ref{eq:words}) by no more than $1/(a^{\sharp}_n-1)$.
   This completes the proof of the fact that for such well approximable
   $\alpha$ the spectrum consists of a single ray.
\end{proof}

\section{The Bounded Case}
\label{sec:6tdas}
In the light of results of the previous section, we restrict attention from this point to
the case where the ordinary continued has bounded partial quotients
($a^\sharp_{n}$).  This translates in the case of the negative continued fraction to
the sequence $a_1,a_2,a_3,\ldots$ being bounded, with least upper bound $M$,
and the lengths of the blocks of consecutive $2$'s also being bounded with
least upper bound $N-1\geq 0$.  Then it follows from equations
\eqref{eq:18} and  \eqref{eq:19} that
\begin{equation}
  \label{eq:26}
  \frac1{M}<\overline\alpha_i<\frac{N}{N+1}
\end{equation}
hold for all $i\ge1$. We choose $L$ to be the smallest integer
such that
\begin{equation}
  \label{eq:175}
  \Bigl(\frac{N}{N+1}\Bigr)^L\le\frac{(1-\frac{N}{N+1})(1-\frac{N^{2}}{(N+1)^{2}})}{M^N(M^2-1)}.
\end{equation}
The numbers $N$ and $L$ will figure significantly in the proof in the
bounded case. 

\subsection{Computation of $\mathcal M_+(\alpha,\beta)$}
\label{sec:scomanbbd}

We will define   a collection of $\beta$'s,  in terms of
their Davenport coefficients, which $\beta$ have  the property that
for some subsequence $(k(i))$ of positive integers
\begin{equation}
  \label{eq:154}
  {\mathcal M}_{+}(\alpha,\beta)=\liminf_{i\to\infty}\lambda_{k(i)}(\beta).
\end{equation}
This enables us to work with just the $\lambda_{k(i)}$ rather than the
$\rho_{n}$ and simplifies the rest of the proof of our main theorem. 
We assume throughout the remainder of the proof of the bounded partial
quotient case that  $\beta\neq n\alpha+m$ for some integers $n$ and $m$.

We record some simple results in the following lemma. 
\begin{lem}
\label{thm:lkadlnla}
  \begin{enumerate}
  \item For $i<j$, 
    \begin{equation}
      \label{eq:15}
q_iD_j
=\frac{\alpha_{i+1}\alpha_{i+2}\ldots\alpha_j}{1-\overline\alpha_i\alpha_{i+1}}.      
    \end{equation}
  \item Let $r$ and $s$ be positive integers satisfying $r\ge sL$.
Then
\begin{equation*}
  \label{eq:176}
  q_uD_{v-1}<q_{n-1}D_m(1-\alpha_{m+1})<q_{n-1}D_m
\end{equation*}
whenever $u$, $v$, $n$ and $m$ are positive integers with
$u+r<v$ and $n<m\le n+s+N$.
  \end{enumerate}
\end{lem}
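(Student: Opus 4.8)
\textbf{Proof proposal for Lemma~\ref{thm:lkadlnla}.}

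The plan is to prove part (1) first as a direct computation from the definitions, and then use it together with the bounds \eqref{eq:26} and the choice of $L$ in \eqref{eq:175} to establish the estimate in part (2). For part (1), I would start from $D_j = \alpha_1\alpha_2\cdots\alpha_j$ and $D_i = \alpha_1\cdots\alpha_i$, so that $D_j/D_i = \alpha_{i+1}\alpha_{i+2}\cdots\alpha_j$; the only real content is to show $q_i D_i = 1/(1-\overline\alpha_i\alpha_{i+1})$. Using \eqref{eq:25}, $q_i = 1/(\overline\alpha_1\cdots\overline\alpha_i)$, and $D_i = \alpha_1\cdots\alpha_i$, one computes $q_i D_i = \prod_{k=1}^i (\alpha_k/\overline\alpha_k)$; alternatively — and more cleanly — I would use the relation $q_{i-1}/q_i = \overline\alpha_i$ from \eqref{eq:24} together with \eqref{eq:22} to write $\alpha - p_i/q_i$ in closed form and recognise $q_i D_i$ as a geometric-type sum. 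The identity $q_i D_i(1-\overline\alpha_i\alpha_{i+1}) = 1$ should fall out of the recurrence \eqref{eq:19} by induction on $i$: assuming it at stage $i-1$, expand $q_i = a_i q_{i-1} - q_{i-2}$ and $D_i = \alpha_i D_{i-1}$, use $a_i = \alpha_{i+1} + 1/\alpha_i$ and $\overline\alpha_i = q_{i-1}/q_i$, and simplify.

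For part (2), the goal is the chain $q_u D_{v-1} < q_{n-1}D_m(1-\alpha_{m+1}) < q_{n-1}D_m$ under $u+r<v$, $n<m\le n+s+N$, and $r\ge sL$. The rightmost inequality is immediate since $0<\alpha_{m+1}<1$. For the middle inequality I would apply part (1) twice: $q_u D_{v-1} = \alpha_{u+1}\cdots\alpha_{v-1}/(1-\overline\alpha_u\alpha_{u+1})$ and $q_{n-1}D_m = \alpha_n\alpha_{n+1}\cdots\alpha_m/(1-\overline\alpha_{n-1}\alpha_n)$. Now estimate: the denominators are both bounded between roughly $1-N^2/(N+1)^2$ and $1$ by \eqref{eq:26}, and the factor $1-\alpha_{m+1}$ is at least $1/M$ (since $\alpha_{m+1} = a_{m+1} - 1/\alpha_m < a_{m+1} - 1 \le M-1$, hence $1-\alpha_{m+1} > 2-M$... — more carefully, $1-\alpha_{m+1}>1-(a_{m+1}-1) \ge ?$, so I will instead bound $1-\alpha_{m+1}$ from below using that $a_{m+1}\ge 2$ gives $\alpha_{m+1}=a_{m+1}-1/\alpha_m$, and a long run of $2$'s is needed to push $\alpha_{m+1}$ close to $1$; the bound $N-1$ on block lengths and \eqref{eq:26} control this, giving $1-\alpha_{m+1}>c/M^N$ for an absolute constant). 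The numerator ratio is the crux: $q_u D_{v-1}/(q_{n-1}D_m)$ is, up to the bounded denominator and $(1-\alpha_{m+1})$ factors, a product of $\overline\alpha$'s and $\alpha$'s over indices between $u$ and $v-1$ versus between $n$ and $m$. Since $v-1 > u+r-1 \ge u + sL - 1$ while $m-n \le s+N$, the product over the $u$-to-$v$ range contains at least $sL$ factors each $<N/(N+1)$, contributing a factor $\le (N/(N+1))^{sL}$, whereas the $n$-to-$m$ range contributes at most $s+N$ factors each $>1/M$, i.e.\ a factor $\ge M^{-(s+N)}$. So the ratio is at most roughly $(N/(N+1))^{sL}\cdot M^{s+N}/(1-\alpha_{m+1})^{-1}$ times bounded constants; the definition \eqref{eq:175} of $L$ is precisely engineered so that $(N/(N+1))^L \cdot M \cdot (\text{the constant from one } 1-\alpha_{m+1}\text{ factor and the denominators}) < 1$, so raising to the $s$-th power and absorbing the finitely many extra $M^N$ and constant terms (covered by the $M^N(M^2-1)$ in the denominator of \eqref{eq:175}) forces the whole ratio below $1-\alpha_{m+1}$, which is the middle inequality.

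I expect the main obstacle to be getting the bookkeeping of indices exactly right and matching it to the precise constant in \eqref{eq:175}: one must pair up the $\overline\alpha$ and $\alpha$ factors so that the ``excess'' $sL$ small factors dominate both the $s+N$ large factors in the numerator of $q_{n-1}D_m$ and the single $(1-\alpha_{m+1})$ factor we are trying to beat, while the two $1-\overline\alpha\alpha$ denominators and the short block of $2$'s near index $m$ only cost an absolute constant (the numerator of the right side of \eqref{eq:175}) and one power of $M^N(M^2-1)$. The inequality \eqref{eq:175} appears to have been reverse-engineered for exactly this step, so once the factor-counting is set up correctly the estimate should close; the delicate point is ensuring $L$ copies of the contraction suffice per unit of $s$, which is why the hypothesis reads $r \ge sL$ rather than $r \ge s + L$ or similar. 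I would write part (2) by first reducing to the case $s=1$ (a single application), proving $(N/(N+1))^r q_{\text{stuff}} < \cdots$ there directly from \eqref{eq:175}, and then iterating.
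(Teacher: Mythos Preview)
Your strategy matches the paper's: apply part~(1) to both $q_uD_{v-1}$ and $q_{n-1}D_m$, bound every factor via \eqref{eq:26}, and close with \eqref{eq:175}. Two points where you are making it harder than necessary.

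First, the bound on $1-\alpha_{m+1}$ is immediate from \eqref{eq:26}: since $\alpha_{m+1}<R:=N/(N+1)$, one has $1-\alpha_{m+1}>1-R$. No separate analysis of runs of $2$'s or a factor $c/M^N$ is needed here.

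Second, the ``reduce to $s=1$ and iterate'' plan is an unnecessary detour; the paper handles general $s$ in one stroke. From part~(1) and \eqref{eq:26},
\[
q_uD_{v-1}<\frac{R^{\,v-u-1}}{1-R^2}\le\frac{R^r}{1-R^2},
\qquad
q_{n-1}D_m(1-\alpha_{m+1})>\frac{M^{-(s+N+1)}(1-R)}{1-M^{-2}},
\]
so it suffices that $R^r/(1-R^2)\le M^{-(s+N+1)}(1-R)/(1-M^{-2})$. The key observation is that \eqref{eq:175} forces $R^L M<1$ (its right side is visibly $<1/M$), whence
\[
R^rM^{s-1}\le R^{sL}M^{s-1}=(R^LM)^{s-1}R^L<R^L.
\]
One more application of \eqref{eq:175} absorbs the remaining constant $M^{N+2}(M^2-1)/M^2=M^N(M^2-1)$ and finishes the inequality. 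The entire $s$-dependence is disposed of by the single line $(R^LM)^{s-1}<1$; your worry about ``bookkeeping of indices'' evaporates once you organise the estimate this way.
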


\begin{proof} The first part is a simple calculation. For the second part,
  note that the right inequality is obviously true since $0<\alpha_{m+1}<1$.
  To prove the left inequality we observe that \eqref{eq:15} implies
\begin{equation*}
  \label{eq:177}
  q_uD_{v-1}=\frac{\alpha_{u+1}\alpha_{u+2}\dots\alpha_{v-1}}{1-\overline\alpha_u\alpha_{u+1}}.
\end{equation*}
Using \eqref{eq:26}, \eqref{eq:24} and  $u+r<v$, we have
\begin{equation*}
  \label{eq:178}
  q_uD_{v-1}<\frac{R^{v-u-1}}{1-R^2}\le\frac{R^r}{1-R^2},
\end{equation*}
where $R=N/(N+1)$.
Similarly, 
\begin{equation*}
  \label{eq:179}
  \begin{aligned}[t]
  q_{n-1}D_m(1-\alpha_{m+1})
 & =\frac{\alpha_{n}\alpha_{n+1}\dots\alpha_m(1-\alpha_{m+1})}
    {1-\overline\alpha_{n-1}\alpha_n}\\
  &>\frac{M^{-(s+N+1)}(1-R)}{1-M^{-2}}.   
  \end{aligned}
\end{equation*}
The lemma is, therefore, true if
\begin{equation*}
  \label{eq:181}
  \frac{R^r}{1-R^2}\le\frac{M^{-(s+N+1)}(1-R)}{1-M^{-2}}
\end{equation*}
Since $r\ge sL$ and $R<1$ and $s\ge1$ and $R^LM<1$ we have
  $R^rM^{s-1}<R^{sL}M^{s-1}<R^L$
and the result follows immediately from the definition of $L$.
\end{proof}

\begin{thm}
\label{thm:98967}
Choose positive integers $r$ and $s$ with $r\ge sL$, and an increasing
sequence of indices $(k(i))$ with $k(i+1)>k(i)+r$. 
Let $0<\beta<1$ with Davenport coefficients
$(b_i)$  satisfy:
\begin{enumerate}
\item for each $i\ge1$ the sequence $b_{k(i)+1},b_{k(i)+2},\ldots,b_{k(i)+r}$
is a block of $r$ zeros;
\item there is no block of $N+s$ consecutive zeros in  $(b_{n})$ between
  $k(i)+r$ and $k(i+1)$,
\item $\beta$ is not in short intervals at level $n$ for $N+s$ consecutive
  values of $n$, in other words the Davenport coefficients of $\beta$ contain no
  sequence of the form $a_{j}-1,a_{j+1}-2,\ldots,a_{j+N+s-1}-2$.
\end{enumerate}
then
\begin{equation*}
  \label{eq:184}
  {\mathcal M}_{+}(\alpha,\beta)=\liminf_{i\to\infty}\lambda_{k(i)}(\beta).
\end{equation*}
\end{thm}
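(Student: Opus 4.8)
The goal is to show that under hypotheses (1)--(3) the value ${\mathcal M}_{+}(\alpha,\beta)$ is computed purely along the subsequence $(k(i))$ via the $\lambda$'s, i.e.\ that none of the other quantities $\lambda_{n}(\beta)$ (for $n$ not of the form $k(i)$) nor any $\rho_{n}(\beta)$ can undercut $\liminf_{i}\lambda_{k(i)}(\beta)$ in the limit. By Theorem~\ref{thm:333} we already know ${\mathcal M}_{+}(\alpha,\beta)=\min\{\liminf_{n}\lambda_{n}(\beta),\ \liminf_{n}\rho_{n}(\beta)\}$, so it suffices to prove two things: first, $\liminf_{i\to\infty}\lambda_{k(i)}(\beta)\le\liminf_{n\to\infty}\lambda_{n}(\beta)$ is automatic, so the content is the reverse, that $\liminf_{n}\lambda_{n}(\beta)\ge\liminf_{i}\lambda_{k(i)}(\beta)$; second, $\liminf_{n}\rho_{n}(\beta)\ge\liminf_{i}\lambda_{k(i)}(\beta)$. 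Equivalently, I want to show that for every large $n$ there is an $i$ with $k(i)$ close enough to $n$ that $\lambda_{n}(\beta)$ is essentially bounded below by $\lambda_{k(i)}(\beta)$ (really that $\lambda_{n}(\beta)$ is large whenever it is not one of the $\lambda_{k(i)}$), and likewise for $\rho_{n}$.

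\textbf{Step 1: reduce $\lambda_{n}$ to the last nonzero digit and the next nonzero digit.} Fix $n$ not equal to any $k(i)$. Using Lemma~\ref{thm:6795}(2), $\lambda_{n}(\beta)=\lambda_{n'}(\beta)$ where $n'\le n$ is the largest index $\le n$ with $b_{n'}\ne0$; and using part (3) of that lemma, if $m>n$ is the next index with $b_{m}\ne0$, then $q_{n'-1}D_{m}\le\lambda_{n'}(\beta)<q_{n'}D_{m-1}$. The point of hypotheses (1) and (2) is exactly to control the gap $m-n'$: between two consecutive blocks of $r$ zeros the digits contain no run of $N+s$ zeros, so if $n$ lies in the region between $k(i)+r$ and $k(i+1)$ the distance from $n$ to the nearest nonzero digit on either side is at most $N+s$; and if $n$ lies inside one of the forced zero-blocks $[k(i)+1,k(i)+r]$, then $\lambda_{n}(\beta)=\lambda_{k(i)}(\beta)$ outright, because $b_{k(i)+1}=\cdots=b_{k(i)+r}=0$ forces the last nonzero digit $\le n$ to be at an index $\le k(i)$, hence $\lambda_{n}=\lambda_{k(i)}$ (here I also need that $k(i)$ itself carries $\lambda_{n}$ down, which follows because $b_{k(i)+1}=0$). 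So the only $n$ that could give a genuinely new (and possibly smaller) value of $\lambda_{n}$ are those within distance $N+s$ of a nonzero digit in the ``active'' region, and for those Lemma~\ref{thm:lkadlnla}(2) applies.

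\textbf{Step 2: apply the quantitative estimate.} For such an $n$, write the last nonzero index $\le n$ as $n'$ and the next nonzero index as $m$, with $m-n'\le N+s+(\text{a bounded amount coming from how far }n\text{ is from }n')$; more carefully, I will want to bound $m-n'$ by something like $2(N+s)$, and also relate $n'$ to the preceding $k(i)$: since there are $r\ge sL$ forced zeros just before the active region, the \emph{previous} nonzero digit before $n'$ is far (at least $r$) back, which is what feeds Lemma~\ref{thm:lkadlnla}(2) with $u$ that index, $v=n'$. The conclusion of Lemma~\ref{thm:lkadlnla}(2), with the roles arranged so that $n\leftrightarrow$ the old ``$n-1$'' and $m\leftrightarrow$ old ``$m$'', gives $q_{u}D_{v-1}<q_{n'-1}D_{m}(1-\alpha_{m+1})$; combined with $\lambda_{k(i)}(\beta)<q_{k(i)}D_{?}\cdots$ bounds from Lemma~\ref{thm:6795}(3) this shows $\lambda_{k(i)}(\beta)\le\lambda_{n}(\beta)$ up to controlled constant factors, or rather — and this is the cleaner route — shows that $\lambda_{n}(\beta)$ is at least as large as the nearest preceding $\lambda_{k(i)}(\beta)$, because the ratio $\lambda_{n}(\beta)/\lambda_{k(i)}(\beta)$ is bounded below by $q_{n'-1}D_{m}\big/(q_{k(i)}D_{m(i)-1})$ and the $L$-condition \eqref{eq:175} was rigged precisely so that $r\ge sL$ forces $R^{r}/(1-R^{2})\le M^{-(s+N+1)}(1-R)/(1-M^{-2})$, killing the extra factors. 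The $\rho_{n}$ case is handled the same way using Lemma~\ref{thm:6795}(4),(5) in place of (2),(3): hypothesis (3) guarantees no run of $N+s$ consecutive short intervals, so the indices where $Q_{n}\ge q_{n}-q_{n-1}$ behave exactly like the zero-digit indices did for $\lambda$, and the same Lemma~\ref{thm:lkadlnla}(2) estimate applies (this is why the lemma is stated with the factor $(1-\alpha_{m+1})$ already present).

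\textbf{Step 3: pass to the liminf.} Having shown that for all sufficiently large $n$, $\lambda_{n}(\beta)\ge\lambda_{k(i(n))}(\beta)$ and $\rho_{n}(\beta)\ge\lambda_{k(i(n))}(\beta)$ for a suitable $i(n)\to\infty$, taking $\liminf$ over $n$ gives $\liminf_{n}\lambda_{n}(\beta)\ge\liminf_{i}\lambda_{k(i)}(\beta)$ and $\liminf_{n}\rho_{n}(\beta)\ge\liminf_{i}\lambda_{k(i)}(\beta)$, while the reverse inequality $\liminf_{n}\lambda_{n}(\beta)\le\liminf_{i}\lambda_{k(i)}(\beta)$ is trivial since $(k(i))$ is a subsequence. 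Plugging into \eqref{eq:143} yields ${\mathcal M}_{+}(\alpha,\beta)=\liminf_{i\to\infty}\lambda_{k(i)}(\beta)$. \textbf{The main obstacle} is Step 2: correctly matching the indices $(u,v,n,m)$ in Lemma~\ref{thm:lkadlnla}(2) to the situation at hand and verifying that hypotheses (1)--(3) really do force the gap between consecutive ``carrying'' indices to stay within the window $[n-s, n+s+N]$ demanded by that lemma, \emph{uniformly} in $n$ — in particular handling the boundary indices near the ends of the forced zero-blocks, and making sure the $\rho$-analysis genuinely reduces to the same estimate rather than needing a separate bound. Once the bookkeeping is set up, the analytic content is entirely absorbed into the choice of $L$ via \eqref{eq:175}.
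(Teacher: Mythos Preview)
Your approach is essentially the same as the paper's: reduce via Theorem~\ref{thm:333}, use Lemma~\ref{thm:6795}(2),(3) to bound $\lambda_{k(i)}$ from above by $q_{u}D_{v-1}$ (with $v-u>r$, coming from the forced zero block) and $\lambda_{n}$ from below by $q_{n'-1}D_{m}$ (with $m-n'$ small, coming from hypothesis~(2)), then invoke Lemma~\ref{thm:lkadlnla}(2) to compare the two; finally run the dual argument for $\rho_{n}$ using hypothesis~(3) and Lemma~\ref{thm:6795}(4),(5). Your Step~3 is exactly right.

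There is one concrete point in Step~1/Step~2 where your stated claim fails and you need a separate disposal. You assert that for $n$ in the active region $(k(i)+r,\,k(i+1))$ the next nonzero digit $m>n'$ satisfies $m-n'\le$ something like $2(N+s)$. This is false at the right boundary: if $b_{j}=0$ for all $n'<j\le k(i+1)$, then the next nonzero digit lies beyond the \emph{next} forced zero block, so $m>k(i+1)+r$ and $m-n'>r$, which is far too large to feed into Lemma~\ref{thm:lkadlnla}(2). The paper handles precisely this case by observing that when $b_{j}=0$ for $n'<j\le k(i+1)$ one has $\lambda_{n}=\lambda_{n'}=\lambda_{k(i+1)}$ directly (repeated application of Lemma~\ref{thm:6795}(2) going \emph{forward} from $n'$ to $k(i+1)$), so one compares $\lambda_{n}$ to $\lambda_{k(i+1)}$ rather than to $\lambda_{k(i)}$. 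This is exactly the ``boundary indices near the ends of the forced zero-blocks'' issue you flag in your last paragraph; the resolution is not a sharper gap bound but an equality to the next $\lambda_{k(i+1)}$. With that one extra case inserted, your outline matches the paper's proof.
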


\begin{proof}
By Theorem~\ref{thm:333}, it is enough to show that 
\begin{equation*}
  \label{eq:186}
  \lambda_n(\beta)\ge\lambda_{k(i)}(\beta)\qquad\text{or}\qquad
  \lambda_n(\beta)\ge\lambda_{k(i+1)}(\beta)
\end{equation*}
and
\begin{equation}
  \label{eq:187}
  \rho_n(\beta)\ge\lambda_{k(i)}(\beta)
\end{equation}
for all integers $n$ with $k(i)\le n<k(i+1)$, for $i$ sufficiently large.  We
choose $i>i_{0}$ to ensure that some $b_{j}\neq 0$ for some $j<i_{0}$ and that
$\beta$ has appeared in a long interval before that stage. If this were not
possible $\beta$ would be a multiple of $\alpha$ modulo 1.  Now fix $n$
between $k(i)$ and $k(i+1)$. We will liberally use the fact stated in
Lemma~\ref{thm:6795} that we can move back and forth between
$\lambda_{n}(\beta)$ and $\lambda_{m}(\beta)$ provided the intervening $b_{k}$
are all zero. Similarly, at the other extreme, we could move back and forward
between $\rho_{n}(\beta)$ and $\rho_{m}(\beta)$ provided that at the
intervening levels $\beta$ is in short intervals.

Choose $u\leq k(i)< v$ to be such that $b_{j}=0$ if $u<j<v$ and to be the
extreme integers with that property.  We observe that $v-u>r$. It follows from
Lemma~\ref{thm:6795} that 
\begin{equation*}
  \label{eq:12}
  \lambda_{k(i)}<q_{u}D_{v-1}. 
\end{equation*}
If $n<k(i)+r$ then $\lambda_{k(i)}=\lambda_{n}$. If not, then $b_{n}$ is
followed by a block of at most $N+s$ zeros unless $b_{m}=0$ for all $m$ with
$n<m<k(i+1)$, in which case $\lambda_{k(i+1)}=\lambda_{n}$.  If
$\lambda_{k(i)}\neq\lambda_{n}\neq \lambda_{k(i+1)}$ then
\begin{equation*}
  \label{eq:190}
  q_{n-1}D_m\le\lambda_n(\beta),
\end{equation*}
for some $m\le n+N+s$. 
That
$\lambda_{k(i)}(\beta)\le\lambda_n(\beta)$ follows from 
\begin{equation*}
  \label{eq:191}
  q_uD_{v-1}\le q_{n-1}D_m.
\end{equation*}
which  follows immediately from  $m\le n+s+N$ and  $u+r<v$.

The argument to show that 
\eqref{eq:187} holds when $k(i)\le n<k(i+1)$ is similar but uses the fact that 
$\beta$ is not in  a long sequence of consecutive short intervals. 
\end{proof}

\subsection{Elements of ${\mathcal S}_{+}(\alpha)$}

Now we give a construction for  certain elements of ${\mathcal S}_{+}(\alpha)$ using
Theorem~\ref{thm:98967}.  First we impose additional constraints on the
sequence $(k(i))$ so that the limits of the sequences $\overline\alpha_{k(i)}$
\eqref{eq:24} and $\alpha_{k(i)+1}$ both exist.  Moreover, the limits lie
strictly between $0$ and $1$, since \eqref{eq:26} and \eqref{eq:24} hold for
all $i\ge1$ and $0<1/M<N/(N+1)<1$.
The collection of  $\beta$ to be described in terms of their Davenport
expansions will  be the  ones for which $\mathcal M(\alpha,\beta)$ are in the Hall's Ray.

\begin{defn}
We choose $(K(i))$ be an increasing sequence of indices with gaps
$K(i+1)-K(i)$ tending to infinity
such that the limits 
\begin{equation}
  \label{eq:198}
  \begin{aligned}
    a_1^-,a_2^-,a_3^-,\ldots&
    =\lim_{i\to\infty}a_{K(i)},a_{K(i)-1},\ldots,a_2,a_1\ldots\\
    a_1^+,a_2^+,a_3^+,\ldots
    &=\lim_{i\to\infty}a_{K(i)+1},a_{K(i)+2},a_{K(i)+3},\ldots,
  \end{aligned}
\end{equation}
exist; that is, that in each case the sequence of integers eventually
becomes constant. The existence of such a sequence follows quickly by
a diagonal argument from the finiteness of the alphabet from which the
$a_i$'s are chosen. 
\end{defn}
We write
\begin{equation}
  \label{eq:197}
  \alpha^-=\langle a_1^-,a_2^-,a_3^-,\ldots\rangle \qquad\text{and}\qquad
  \alpha^+=\langle a_1^+,a_2^+,a_3^+,\ldots\rangle.
\end{equation}
The following lemma is a straightforward consequence of the properties
of the sequence $a_1,a_2,a_3,\ldots$

\begin{lem}
  \label{thm:616161}
Each of the sequences $\alpha^-$ and $\alpha^+$ have all of their
terms less than or equal to $M$ and contain no block of $N$
consecutive $2$'s.   
\end{lem}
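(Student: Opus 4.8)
The plan is to unpack the definitions in \eqref{eq:198} and \eqref{eq:197} and verify the two claimed properties directly from the structural constraints on the sequence $a_1,a_2,a_3,\ldots$ established at the start of Section~\ref{sec:6tdas}. Recall that in the bounded case we have assumed $a_i\le M$ for all $i$ (with $M$ the least upper bound) and that every maximal block of consecutive $2$'s in $(a_i)$ has length at most $N-1$. The terms $a_j^-$ and $a_j^+$ are, by the definition of $(K(i))$, limits of sequences of integers drawn from the finite alphabet $\{2,3,\ldots,M\}$ that are eventually constant; hence each $a_j^\pm$ is an integer lying in $\{2,\ldots,M\}$, which immediately gives the first assertion that all terms are $\le M$ (and indeed $\ge 2$).

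For the ``no block of $N$ consecutive $2$'s'' claim, I would argue as follows. Fix $j$ and suppose, for contradiction, that $a_j^+=a_{j+1}^+=\cdots=a_{j+N-1}^+=2$. Since each of these $N$ limits is eventually constant, there is an index $i$ (large enough to work simultaneously for all $N$ of them) with $a_{K(i)+j}=a_{K(i)+j+1}=\cdots=a_{K(i)+j+N-1}=2$. But this is a block of $N$ consecutive $2$'s in the original sequence $(a_i)$, contradicting the bound $N-1$ on block lengths. The same argument applies verbatim to $\alpha^-$, reading the block $a_j^-,a_{j+1}^-,\ldots,a_{j+N-1}^-$ as a limit of the reversed windows $a_{K(i)-j+1},a_{K(i)-j},\ldots,a_{K(i)-j-N+2}$; a block of $N$ consecutive $2$'s there is again a block of $N$ consecutive $2$'s in $(a_i)$ (the reversal does not matter, since ``$N$ consecutive $2$'s'' is a symmetric condition), which is impossible.

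There is essentially no serious obstacle here; the only point requiring a little care is the uniformity in the choice of $i$ — one must pick a single index $i$ that simultaneously realizes the eventual-constancy of all $N$ (or all finitely many relevant) coordinates, which is legitimate precisely because each coordinate sequence stabilizes after finitely many steps, so their maximum stabilization index is finite. I would phrase the contradiction argument so that this uniform choice is explicit. A secondary minor point worth stating is that $a_j^\pm\ge 2$, so that $\alpha^\pm$ are genuine negative continued fractions of irrationals in $(0,1)$ and the machinery of Section~\ref{sec:23456} applies to them; this also follows from the alphabet being $\{2,\ldots,M\}$. Once these observations are assembled, the lemma follows.
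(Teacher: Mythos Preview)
Your argument is correct and is exactly the ``straightforward consequence of the properties of the sequence $a_1,a_2,a_3,\ldots$'' that the paper alludes to without writing out; the only content is the eventual-constancy uniformity you highlight, together with the observation that a block of $N$ consecutive $2$'s in the limit sequence forces such a block in a window of the original sequence. The paper gives no explicit proof, so there is nothing further to compare.
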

Evidently, the numbers $\alpha^-$ and $\alpha^+$ are irrational with 
$0<\ \alpha^{\pm}<1$, and the  partial quotients of
their regular continued fraction expansions satisfy \eqref{eq:26}.

All of the theory in the preceding sections is applicable to $\alpha^-$ or
$\alpha^+$ in place of $\alpha$.  We introduce the following notation.  For
$i\ge1$, define
\begin{equation}
  \label{eq:200}
  \alpha^-_i=\langle a^-_i,a^-_{i+1},a^-_{i+2},\ldots\rangle\qquad\text{and}\qquad
  \alpha^+_i=\langle a^+_i,a^+_{i+1},a^+_{i+2},\ldots\rangle
\end{equation}
and  set
\begin{equation}
  \label{eq:201}
  D^-_i=\alpha^-_1\alpha^-_2\ldots\alpha^-_i\qquad\text{and}\qquad
  D^+_i=\alpha^+_1\alpha^+_2\ldots\alpha^+_i.
\end{equation}
It follows from \eqref{eq:197}, \eqref{eq:200}, and
the discussion above that
\begin{equation*}
  \label{eq:203}
  \alpha^-_k=\lim_{i\to\infty}\overline\alpha_{K(i)-k+1}\qquad\text{and}\qquad
  \alpha^+_k=\lim_{i\to\infty}\alpha_{K(i)+k}.
\end{equation*}
Hence
\begin{equation*}
  \label{eq:204}
  \begin{aligned}
   D^-_k&=\lim_{i\to\infty}\overline\alpha_{K(i)}\overline\alpha_{K(i)-1}\ldots\overline\alpha_{K(i)-k+1}
   =\lim_{i\to\infty}\frac{q_{K(i)-k}}{q_{K(i)}}\\
  D^+_k&=\lim_{i\to\infty}\alpha_{K(i)+1}\alpha_{K(i)+2}\ldots\alpha_{K(i)+k}
  =\lim_{i\to\infty}\frac{D_{K(i)+k}}{D_{K(i)}}.
  \end{aligned}
\end{equation*}
The next lemma,  a crucial
one in the proof, makes use of these identities.  

\begin{lem}
\label{thm:67023}
Let $(b_i)$ be the Davenport coefficients of a number
$\beta\in[0,1]$ for which  both of the limits
\begin{equation*}
  \begin{aligned}[t]
  b^-_1,b^-_2,b^-_3,\ldots
  &=\lim_{i\to\infty}b_{K(i)},b_{K(i)-1},\ldots,b_1,0,0,0,\ldots\\
  b^+_1,b^+_2,b^+_3,\ldots
  &=\lim_{i\to\infty}b_{K(i)+1},b_{K(i)+2},b_{K(i)+3},\ldots
  \end{aligned}
  \label{eq:206}
\end{equation*}
exist and let
\begin{equation*}
  \label{eq:208}
  \beta^-=\sum^\infty_{k=1}b^-_kD^-_k\qquad\text{and}\qquad
  \beta^+=\sum^\infty_{k=1}b^+_kD^+_k.
\end{equation*}
Then
\begin{equation*}
  \label{eq:209}
  \lim_{i\to\infty}\lambda_{K(i)}(\beta)=\frac{\beta^-\beta^+}{1-\alpha^-\alpha^+}.
\end{equation*}
\end{lem}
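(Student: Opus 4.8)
The plan is to start from the explicit formula for $\lambda_n(\beta)$ that comes out of Lemma~\ref{thm:6795}(1) together with the Davenport expansion, namely
\begin{equation*}
  \lambda_{n}(\beta)=Q_{n}D_{n}\beta_{n+1}
  =\Bigl(\sum_{k=1}^{n}b_{k}q_{k-1}\Bigr)D_{n}\beta_{n+1}.
\end{equation*}
I would rewrite each of the three factors in a ``normalized'' form adapted to the index $n=K(i)$. For the first factor, using $q_{k-1}/q_{n}=\overline\alpha_{n}\overline\alpha_{n-1}\cdots\overline\alpha_{k}$ from \eqref{eq:24}–\eqref{eq:25}, write
\begin{equation*}
  Q_{n}=q_{n}\sum_{k=1}^{n}b_{k}\,\overline\alpha_{n}\overline\alpha_{n-1}\cdots\overline\alpha_{k},
\end{equation*}
and recognize the sum, after reindexing $j=n-k+1$, as $\sum_{j\ge1}b_{n-j+1}\,\overline\alpha_{n}\cdots\overline\alpha_{n-j+1}$, which by the stated identities $\alpha^{-}_{k}=\lim_i\overline\alpha_{K(i)-k+1}$ and $D^{-}_{k}=\lim_i q_{K(i)-k}/q_{K(i)}$ converges to $\sum_{k\ge1}b^{-}_{k}D^{-}_{k}=\beta^{-}$ as $i\to\infty$ along $n=K(i)$. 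For the third factor, $\beta_{n+1}=\sum_{k\ge n+1}b_{k}\,\alpha_{n+1}\alpha_{n+2}\cdots\alpha_{k}$ divided through appropriately; reindexing $j=k-n$ gives $\sum_{j\ge1}b_{n+j}\,\alpha_{n+1}\cdots\alpha_{n+j}$, which converges to $\sum_{j\ge1}b^{+}_{j}D^{+}_{j}=\beta^{+}$ using $\alpha^{+}_{k}=\lim_i\alpha_{K(i)+k}$. Finally the product $q_{n}D_{n}$ is handled by Lemma~\ref{thm:lkadlnla}(1) with $i=0$: $q_{0}D_{n}=D_{n}=\alpha_{1}\cdots\alpha_{n}$, but more usefully one writes $q_{n}D_{n}=q_{n}\alpha_{1}\cdots\alpha_{n}$ and combines the ``tail from $k$'' pieces so that the normalization $q_{n}D_{n}\cdot\overline\alpha_{n}\cdots\overline\alpha_{k}\cdot\alpha_{n+1}\cdots\alpha_{k}$ telescopes; the cleanest route is to apply part (1) of Lemma~\ref{thm:lkadlnla} directly to each cross term $q_{k-1}D_{\ell}$ appearing after multiplying out, obtaining
\begin{equation*}
  q_{k-1}D_{\ell}=\frac{\alpha_{k}\alpha_{k+1}\cdots\alpha_{\ell}}{1-\overline\alpha_{k-1}\alpha_{k}},
\end{equation*}
and noting that for $k\le n<\ell$ the denominator $1-\overline\alpha_{n}\alpha_{n+1}$ is the only ``boundary'' term, so that
\begin{equation*}
  \lambda_{n}(\beta)=\frac{1}{1-\overline\alpha_{n}\alpha_{n+1}}
  \Bigl(\sum_{k=1}^{n}b_{k}\,\overline\alpha_{n}\cdots\overline\alpha_{k}\Bigr)
  \Bigl(\sum_{j\ge1}b_{n+j}\,\alpha_{n+1}\cdots\alpha_{n+j}\Bigr).
\end{equation*}

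With $\lambda_{n}(\beta)$ in this factored shape, taking $n=K(i)$ and letting $i\to\infty$ is then a matter of passing to the limit in each of the three pieces: the prefactor tends to $1/(1-\alpha^{-}_{1}\alpha^{+}_{1})=1/(1-\alpha^{-}\alpha^{+})$ because $\overline\alpha_{K(i)}\to\alpha^{-}_{1}$ (which by \eqref{eq:24} equals $\langle a^{-}_{1},a^{-}_{2},\dots\rangle=\alpha^{-}$) and $\alpha_{K(i)+1}\to\alpha^{+}_{1}=\alpha^{+}$; the left sum tends to $\beta^{-}$; and the right sum tends to $\beta^{+}$. Assembling the three limits gives exactly $\dfrac{\beta^{-}\beta^{+}}{1-\alpha^{-}\alpha^{+}}$.

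The main obstacle, and the only place real care is needed, is the interchange of limits: each of the two infinite sums (over $k$ from $1$ to $n$, and over $j\ge1$) is being evaluated at a moving index $n=K(i)$, and the summands themselves depend on $i$, so one is really proving a ``uniform convergence plus termwise convergence'' statement. This is where the hypotheses are used: the bounds \eqref{eq:26}, i.e. $1/M<\overline\alpha_{\ell}<N/(N+1)$ and correspondingly $\alpha_{\ell}<N/(N+1)=:R<1$, give geometric tails $\overline\alpha_{n}\cdots\overline\alpha_{k}\le R^{\,n-k+1}$ and $\alpha_{n+1}\cdots\alpha_{n+j}\le R^{\,j}$, with $b_{k}<a_{k}\le M$ bounded, so both series are dominated uniformly in $n$ by convergent geometric series $M\sum R^{m}$. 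Hence the tail beyond any fixed depth $J$ is $O(R^{J})$ uniformly in $i$, and on the finitely many initial terms $k>n-J$ (resp. $j\le J$) the convergences $\overline\alpha_{K(i)-k+1}\to\alpha^{-}_{k}$, $b_{K(i)-k+1}\to b^{-}_{k}$ (resp. $\alpha_{K(i)+j}\to\alpha^{+}_{j}$, $b_{K(i)+j}\to b^{+}_{j}$) are the eventual-constancy statements built into the definitions of $(K(i))$ and of $\beta^{\pm}$. A standard $\varepsilon/3$ argument — choose $J$ so the uniform tails are $<\varepsilon$, then $i$ large enough that every one of the finitely many initial terms has stabilized — closes the proof. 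I would also remark at the end that the same computation, with the cases in \eqref{eq:104} tracked through, would give the analogous limit for $\rho_{K(i)}(\beta)$, but that is not needed here.
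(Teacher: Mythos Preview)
Your proof is correct and is essentially the paper's own argument: factor $\lambda_{K(i)}(\beta)$ as $\dfrac{Q_{K(i)}}{q_{K(i)}}\cdot q_{K(i)}D_{K(i)}\cdot\beta_{K(i)+1}$ and pass to the limit in each piece, your uniform-tail $\varepsilon/3$ argument via \eqref{eq:26} making explicit what the paper dismisses as a ``straightforward consequence''. One minor point: the digression on cross terms $q_{k-1}D_{\ell}$ is muddled (each has denominator $1-\overline\alpha_{k-1}\alpha_{k}$, not a common $1-\overline\alpha_{n}\alpha_{n+1}$), but you don't actually need it --- the identity $q_{n}D_{n}=1/(1-\overline\alpha_{n}\alpha_{n+1})$ follows at once from Lemma~\ref{thm:lkadlnla}(1) with $j=i+1$ and division by $\alpha_{i+1}$, and your three-factor decomposition then gives the displayed formula directly.
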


\begin{proof}
By definition
\begin{equation*}
  \label{eq:210}
  \lambda_{K(i)}(\beta)=Q_{K(i)}D_{K(i)}\beta_{K(i)+1}
  =\frac{Q_{K(i)}}{q_{K(i)}}q_{K(i)}D_{K(i)}\beta_{K(i)+1}
\end{equation*}
and, by  \eqref{eq:200},
\begin{equation*}
  \label{eq:212}
  \lim_{i\to\infty}q_{K(i)}D_{K(i)}=\frac1{1-\alpha^-\alpha^+}.
\end{equation*}
In consequence,  it is sufficient to observe that 
\begin{equation*}
  \label{eq:213}
  \lim_{i\to\infty}\frac{Q_{K(i)}}{q_{K(i)}}=\beta^-\qquad\text{and}\qquad
  \lim_{i\to\infty}\beta_{K(i)+1}=\beta^+.
\end{equation*}
This is a straightforward consequence of the fact that 
\begin{equation*}
  \label{eq:214}
  D_{K(i)}\beta_{K(i)+1}=\sum^\infty_{k=1}b_{K(i)+k}D_{K(i)+k}
\end{equation*}
and a corresponding expression for the first limit. 
\end{proof}

Now we define two Cantor-like subsets of $[0,1)$ in terms of their Davenport
expansions.
\begin{defn}
\begin{enumerate}
\item
$\beta\in E(\alpha,s)$ if and only if
in its Davenport coefficents $(b_i)$
no block $b_i,b_{i+1},\dots,b_{i+s}$ consists solely of zeros
or is of the form
\begin{equation}
  \label{eq:225}
  a_i-2,a_{i+1}-2,\dots,a_{i+s-1}-2,a_{i+s}-1.
\end{equation} Note that this does not preclude tail sequences of the form $a_{i}-1,
a_{i+1}-2, a_{i+2}-2, \ldots$. 
\item
$\beta\in F(\alpha,s)$ if and only if
in the sequence $b_1,b_2,b_3,\dots$
no block $b_i,b_{i+1},\dots,b_{i+s}$ consists solely of zeros
or is of the form
\begin{equation*}
  \label{eq:226}
  a_i-1,a_{i+1}-2,a_{i+2}-2,\dots,a_{i+s}-2.
\end{equation*}
\end{enumerate}
We note that both of $F(\alpha,s)$ and $E(\alpha,s)$  are closed
subsets of $[0,1]$. 
\end{defn}

We  now state and prove the main result of this section.
\begin{thm}
\label{thm:main_r_s}
Let $r$ and $s$ be positive integers which satisfy $s\ge N$ and
$r\ge sL$ and let $\alpha^-$ and $\alpha^+$ be defined by \eqref{eq:200} and
$\alpha^+_r$ by \eqref{eq:200}  and $D^+_r$ by \eqref{eq:201}.
For all $e\in E(\alpha^-,s)$ and $f\in F(\alpha^+_{r+1},s)$
there is some $\beta$ with $0<\beta<1$ such that
\begin{equation}
  \label{eq:241}
  {\mathcal M}_{+}(\alpha,\beta)=\frac{efD^+_r}{1-\alpha^-\alpha^+}.
\end{equation}
\end{thm}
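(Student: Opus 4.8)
The plan is to realize the target value $efD^+_r/(1-\alpha^-\alpha^+)$ as $\lim_{i\to\infty}\lambda_{K(i)}(\beta)$ for a carefully built $\beta$, and then invoke Theorem~\ref{thm:98967} to conclude that this liminf equals ${\mathcal M}_{+}(\alpha,\beta)$. First I would unwind what Lemma~\ref{thm:67023} demands: it expresses $\lim_i\lambda_{K(i)}(\beta)$ as $\beta^-\beta^+/(1-\alpha^-\alpha^+)$, where $\beta^-=\sum b^-_kD^-_k$ is built from the reversed Davenport coefficients approaching each $K(i)$ and $\beta^+=\sum b^+_kD^+_k$ from those following $K(i)$. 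So I want to choose the coefficients $(b_n)$ of $\beta$ so that, along the subsequence $K(i)$, the backward limit reconstructs $e\in E(\alpha^-,s)$ and the forward limit reconstructs $fD^+_r$ — the factor $D^+_r$ being forced because $f$ is taken in $F(\alpha^+_{r+1},s)$ and the first $r$ forward coefficients after each $K(i)$ will be zeros (per hypothesis (1) of Theorem~\ref{thm:98967}), so the forward sum starts at level $r+1$, contributing $D^+_r\cdot\sum f^+_k D^{+}_{r+k}/D^+_r = fD^+_r$ after the rescaling identity $D^+_{r+k}=D^+_r\cdot(\alpha^+_{r+1}\cdots)$.

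The construction itself: fix the Davenport digits of $e$ (a sequence avoiding $s{+}1$-blocks of zeros and avoiding the tail-forbidden pattern \eqref{eq:225}) and of $f$ (digits of a point in $F(\alpha^+_{r+1},s)$). Choose $K(i)$ sparse and increasing with $K(i+1)-K(i)\to\infty$, and subordinate to it the sequence $k(i)$ of Theorem~\ref{thm:98967} by placing the mandatory block of $r$ zeros immediately after each $K(i)$ — i.e. set $k(i)=K(i)$. Then: in the window ending at $K(i)$, read off (in reverse) longer and longer initial segments of $e$'s digits; in the window $(K(i)+r,\,K(i+1)]$, read off longer and longer initial segments of $f$'s digits, shifted by $r$. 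Between these controlled windows I need filler digits; since both $e$ and $f$ have no $(s{+}1)$-blocks of zeros and no long runs of the short-interval pattern, I can splice the segments together while (a) never creating $N+s$ consecutive zeros outside the mandatory $r$-zero blocks (hypothesis (2)), (b) never creating a length-$(N{+}s)$ run of the form $a_j-1,a_{j+1}-2,\ldots$ (hypothesis (3)), and (c) keeping the result a legitimate Davenport sequence (Theorem~\ref{thm:hdjd}) — the forbidden patterns \eqref{eq:69},\eqref{eq:70} are ruled out precisely because $s\ge N$ bounds the relevant block lengths and $E,F$ were defined to avoid the dangerous short-interval patterns. One must also check $0<\beta<1$ and $\beta\ne n\alpha+m$, which is automatic since infinitely many $b_n\ne0$ and the tail is never eventually the ambiguous pattern.

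With $\beta$ so constructed, hypotheses (1)–(3) of Theorem~\ref{thm:98967} hold by design, giving ${\mathcal M}_{+}(\alpha,\beta)=\liminf_i\lambda_{K(i)}(\beta)$ (the $k(i)$ and $K(i)$ subsequences can be taken to coincide). Then Lemma~\ref{thm:67023} applies: the backward coefficient limits converge to the digits of $e$ relative to $\alpha^-$, the forward limits (after stripping the $r$ leading zeros) to the digits of $f$ relative to $\alpha^+_{r+1}$, so $\beta^-=e$ and $\beta^+=\sum_k f^+_k D^+_{r+k}=D^+_r\cdot f$ using \eqref{eq:201} and the factorization $D^+_{r+k}=D^+_r(\alpha^+_{r+1}\alpha^+_{r+2}\cdots\alpha^+_{r+k})$. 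Hence $\lim_i\lambda_{K(i)}(\beta)=\beta^-\beta^+/(1-\alpha^-\alpha^+)=efD^+_r/(1-\alpha^-\alpha^+)$, which is the assertion. Since $\liminf$ here is a genuine limit (the relevant complete quotients converge along $K(i)$ by the defining property \eqref{eq:198}), the liminf equals this value.

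The main obstacle I expect is the splicing bookkeeping in the middle paragraph: one must simultaneously satisfy the mandatory-$r$-zeros constraint, the no-long-zero-run constraint, the no-long-short-interval-run constraint, and legality as a Davenport expansion, while still forcing the backward and forward limits to be exactly the prescribed digit strings of $e$ and $f$. The reason it goes through is the mismatch of scales: the windows grow ($K(i+1)-K(i)\to\infty$), so the ``transition zones'' where filler is needed become a vanishing fraction of each window, the prescribed digits of $e$ and $f$ themselves already obey the $E$- and $F$-constraints (no $(s{+}1)$-zero-blocks, no forbidden short patterns), and $s\ge N$ guarantees that avoiding the $E/F$ patterns is strictly stronger than avoiding the Davenport-illegal patterns \eqref{eq:69}–\eqref{eq:70} and the Theorem~\ref{thm:98967}(3) pattern. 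So a routine but slightly tedious interleaving argument closes the gap; everything else is a direct application of Lemma~\ref{thm:67023} and Theorem~\ref{thm:98967}.
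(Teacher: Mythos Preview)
Your proposal is correct and follows essentially the same route as the paper: build $(b_n)$ so that the reversed digits near each $K(i)$ reproduce $e$, the forward digits after the block of $r$ zeros reproduce $f$ (yielding $\beta^+=fD^+_r$), then apply Lemma~\ref{thm:67023} and Theorem~\ref{thm:98967} with $k(i)=K(i)$. The only point the paper makes more concrete than your ``routine interleaving'' is the explicit filler between the $e$- and $f$-windows: it sets $b_j=a_j-2$ there except at a single index $w(i)$ with $a_{w(i)}\ge 3$ where $b_{w(i)}=a_{w(i)}-3$, which simultaneously prevents the forbidden patterns \eqref{eq:69}--\eqref{eq:70}, avoids long zero-blocks, and avoids the short-interval run of Theorem~\ref{thm:98967}(3).
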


\begin{proof}
We will exhibit appropriate Davenport expansions of $\beta$ to achieve this 
result for $f\in F(\alpha^+_{r+1},s)$ and $e\in  E(\alpha^-,s)$. 

Let $e\in E(\alpha^-,s)$ and $f\in F(\alpha^+_{r+1},s)$.
We shall prove there is a $\beta$ with $0<\beta<1$ which
satisfies \eqref{eq:241} by
constructing its Davenport coefficients $(b_i)$.
Specifically, we shall construct $b_1,b_2,b_3,\ldots$ so that
the limits
\begin{equation*}
  \label{eq:242}
  \begin{aligned}[t]
   b^-_1,b^-_2,b^-_3,\ldots
  &=\lim_{i\to\infty}b_{K(i)},b_{K(i)-1},\ldots,b_1,0,0,0,\ldots\\
   b^+_1,b^+_2,b^+_3,\ldots
  &=\lim_{i\to\infty}b_{K(i)+1},b_{K(i)+2},b_{K(i)+3},\ldots
  \end{aligned}
\end{equation*}
exist and
\begin{equation}
  \label{eq:244}
  e=\sum^\infty_{k=1}b^-_kD^-_k\qquad\text{and}\qquad
  fD^+_r=\sum^\infty_{k=1}b^+_kD^+_k.
\end{equation}
Lemma~\ref{thm:67023} then yields:
\begin{equation}
  \label{eq:245}
  \lim_{i\to\infty}\lambda_{K(i)}(\beta)=\frac{efD^+_r}{1-\alpha^-\alpha^+}.
\end{equation}

We  describe sequences
$b^+_1,b^+_2,b^+_3,\ldots$ and $b^-_1,b^-_2,b^-_3,\ldots$
for which \eqref{eq:245} holds.
Let $f_1,f_2,f_3,\ldots$ be the Davenport coefficients of $f$ with respect
to $\alpha^+_{r+1}$
and  observe that
\begin{equation*}
  \label{eq:247}
  f=\sum^\infty_{k=1}f_k\alpha^+_{r+1}\alpha^+_{r+2}\ldots\alpha^+_{r+k}.
\end{equation*}
Multiplication  by $D^+_r$  gives
\begin{equation*}
  \label{eq:248}
  fD^+_r=\sum^\infty_{k=1}f_kD^+_{r+k}.
\end{equation*}
and therefore the right hand formula in \eqref{eq:244} holds if we define
\begin{equation}
  \label{eq:249}
  b^+_1,b^+_2,b^+_3,\ldots=\underbrace{0,\ldots,0}_{r},f_1,f_2,f_3,\ldots.
\end{equation}
It is easily seen that these satisfy the appropriate conditions for a Davenport
expansion. 

For a number $e\in E(\alpha^-,s)$, 
we let $e_1,e_2,e_3,\ldots$ be the $\alpha^-$-expansion of $e$
and as above we observe that
\begin{equation*}
  \label{eq:252}
  e=\sum^\infty_{k=1}e_kD^-_k.
\end{equation*}
The left hand formula in \eqref{eq:244} then holds if we set
\begin{equation*}
  \label{eq:253}
  b^-_1,b^-_2,b^-_3,\ldots=e_1,e_2,e_3,\ldots.
\end{equation*}

Next, we specify enough of the sequence $b_1,b_2,b_3,\ldots$ to ensure that
\eqref{eq:249} and \eqref{eq:244} hold. At this point Figure~\ref{fig:ls}
illustrates definition of  the various pieces of the sequence.
\begin{figure}[ht!]
  \centering
\includegraphics[width=0.9\textwidth]{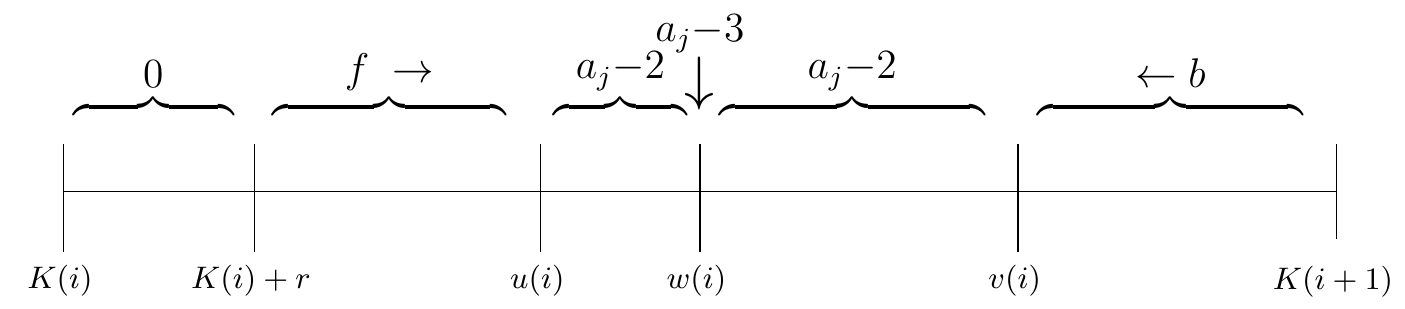}
\caption{The definition of the sequence $b_{i}$.}
  \label{fig:ls}
\end{figure}

For this purpose, we choose a positive integer $i_0$ and sequences of integers
$(u(i))^\infty_{i=i_0}$ and $(v(i))^\infty_{i=i_0}$ such that
$  K(i)\le u(i)<u(i)+N<v(i)\le K(i+1)$
for all $i\ge i_0$ and
\begin{equation*}
  \label{eq:257}
  \lim_{i\to\infty}u(i)-K(i)=\infty\qquad\text{and}\qquad
  \lim_{i\to\infty}K(i+1)-v(i)=\infty.
\end{equation*}
Such sequences exist since the differences $K(i+1)-K(i)$
tend to infinity as $i$ increases.
Furthermore we can
also assume that, for all $i\ge i_0$,
\begin{equation*}
  \label{eq:258}
  \begin{aligned}[t]
   a_{K(i)+1},a_{K(i)+2},\ldots,a_{u(i)}
  &=a^+_1,a^+_2,\ldots,a^+_{u(i)-K(i)}\\
  a_{K(i+1)},a_{K(i+1)-1},\ldots,a_{v(i)}
  &=a^-_1,a^-_2,\ldots,a^-_{K(i+1)-v(i)+1}.
  \end{aligned}
\end{equation*}
We ensure \eqref{eq:249} and \eqref{eq:244} hold by defining
\begin{equation*}
  \label{eq:260}
  \begin{aligned}[t]
  b_{K(i)+1},b_{K(i)+2},\ldots,b_{u(i)}
  &=b^+_1,b^+_2,\ldots,b^+_{u(i)-K(i)}\\
  b_{K(i+1)},b_{K(i+1)-1},\ldots,b_{v(i)}
  &=b^-_1,b^-_2,\ldots,b^-_{K(i+1)-v(i)+1}
  \end{aligned}
\end{equation*}
for all $i\ge i_0$.

Before completing the specification of $(b_j)$ we
further restrict $i_0$ and
the sequences $(u(i))^\infty_{i=i_0}$ and $(v(i))^\infty_{i=i_0}$.
\begin{equation*}
  \label{eq:262}
  b_{u(i)}\ne0\qquad\text{and}\qquad b_{v(i)}\ne0
\end{equation*}
for all $i\ge i_0$. This is  relatively easy to arrange from the
properties  of the $K(i)$  in relation to the Davenport expansion,  and of the
sequences $(u(i))$ and $(v(i))$. To complete the specification of $(b_j)$
we introduce one more sequence.
We choose the sequence $(w(i))^\infty_{i=i_0}$ so that
\begin{equation*}
  \label{eq:265}
  u(i)<w(i)\le u(i)+N \text{ and }   a_{w(i)}\ge3 \quad (i\ge i_0).
\end{equation*}
Such a choice is clearly possible.

We can now unambiguously define
\begin{equation*}
  \label{eq:267}
  b_j=
  \begin{cases}
0&\text{ if $1\le j\le K(i_0)$}\\
a_j-2&\text{ if $u(i)<j<w(i)$ for some $i\ge i_0$}\\
a_j-3&\text{ if $j=w(i)$ for some $i\ge i_0$}\\
a_j-2&\text{if $w(i)<j<v(i)$ for some $i\ge i_0$.}
  \end{cases}
\end{equation*}

It is not hard to verify that $0\le b_i<a_i$ for all $i\ge1$ and
since $b_{w(i)}=a_{w(i)}-3$ for all $i\ge i_0$
it is also clear that no subsequence $b_i,b_{i+1},b_{i+2},\ldots$
is of the form \eqref{eq:70}. It is easy to check that 
$(b_i)$ are  Davenport coefficients
by showing that no block $b_i,b_{i+1},\ldots,b_j$
is of the form \eqref{eq:69}. 

Now we observe that 
the hypotheses of Theorem~\ref{thm:98967} holds with $k(i)=K(i)$ for
all $i$ 
to complete the proof. 
\end{proof}

\subsection{Cantor dissections for $ E(\alpha,s)$ and $F(\alpha,s)$}

Our  eventual aim is to  show that if the integer $s$ is large enough then the
product of the two sets $ E(\alpha^-,s)$ and $F(\alpha^+_{r+1},s)$, where
$r\ge1$, contains an interval.  Towards that aim we describe each of these two
sets in terms of Cantor dissections. We do this for a generic $\alpha$ rather
than $\alpha^{-}$ and $\alpha^{+}$ at this stage.  We collect together a few
definitions.
\begin{defn}
\label{defn:sss}
  \begin{enumerate}
   \item $H(\alpha,s)$ and $G(\alpha,s)$ are the smallest closed intervals
    containing $F(\alpha,s)$ and $E(\alpha,s)$ respectively.
  \item For each sequence $\bc_n=c_1,c_2,\ldots,c_n$ of positive integers,define:
    \begin{equation*}
      \label{eq:288}
      \begin{aligned}[t]
        S(\bc_n)&=\sum^n_{k=1}c_kD_k,\\
        F(\bc_n)&=\{\gamma=\sum_{k=1}^{\infty}b_{k}D_{k}\in 
        F(\alpha,s):b_{k}=c_{k},\ (k=1,2, \ldots,n)\}
      \end{aligned}
    \end{equation*}
    where $\sum_{k=1}^{\infty}b_{k}D_{k}$ is the Davenport expansion of $\gamma$.  Denote by 
    $C(\bc_n)$ the smallest closed interval which contains
    $F(\bc_n)$. Observe that $C(\bc_n)$ may be the empty set.
  \item when $(\bc_n)\neq \emptyset$, $$C(\bc_n)= [\underline C(\bc_n),  \overline C(\bc_n)]$$ where $$\underline C(\bc_n)=\inf C(\bc_n),\ \overline C=\sup C(\bc_n)\text{ and }  |C(\bc_n|=\overline C(\bc_n)-\underline C(\bc_n).$$
  \end{enumerate}
We allow the possibility that $n=0$ in which case $C(\;)=H(\alpha,s)$.
\end{defn}

The dissection of $H(\alpha,s)$ to obtain $F(\alpha,s)$ begins
by replacing $C(\;)=H(\alpha,s)$ with the collection of intervals
\begin{equation*}
  \label{eq:291}
  \{C(0),C(1),\ldots,C(a_1-1)\}.
\end{equation*}
The $n$-th stage of the dissection 
replaces each non-empty interval $C(\bc_n)$
by the collection of intervals
\begin{equation}
  \label{eq:292}
  \{C(\bc_{n+1}):\;0\le c_{n+1}<a_{n+1}\}.
\end{equation}
From the definition of $C(\bc_n)$ it is clear that it is the
smallest closed interval containing the collection of intervals
\eqref{eq:292}. Moreover the restrictions on the digits results in gaps
between all of these. As an illustration, note that if
$C(\bc_{n})$ has the last $n-1$ $c_{k}$ all equal to $0$, then at the next
level $C(\bc_{n},0)=\emptyset$. The same kind of phenomenon
occurs at the opposite end because of the restriction on the number of
terms of the form
$a_{i}-2$. It is clear that this is a Cantor dissection that produces
$F(\alpha,s)$, and we have 
\begin{equation*}
  \label{eq:294}
  \begin{gathered}[t]
    S(\bc_n)+D_{n+s+1}\le\underline C(\bc_n)\leq \overline C(\bc_n)\le S(\bc_n)+D_n\\
    C(\bc_{n+1})\subset
    [S(\bc_n)+c_{n+1}D_{n+1}+D_{n+s+2},\;S(\bc_n)+(c_{n+1}+1)D_{n+1}].
  \end{gathered}
\end{equation*}
Clearly $|C(\bc_n)|\le D_n$, and  it is evident that
\begin{equation*}
  \label{eq:296}
  F(\alpha,s)=\bigcap^\infty_{n=1}\bigcup
  \{C(\bc_n)\ne\emptyset:\;0\le c_i<a_i\}.
\end{equation*}

Now we obtain more precise estimates of the values of the endpoints
$\underline C(\bc_n)$ and $\overline C(\bc_n)$. 
\begin{lem}
\label{thm:ousds}
  Let $s\ge N$, $C(\bc_n)\ne\emptyset$, $t$ the largest integer with
  $0\le t\le n$ such that all of $c_{n-t+1},c_{n-t+2},\ldots,c_n$ are zero and
  $u$  the unique integer with $0\le u\le n$ such that
  $c_{n-u+1},c_{n-u+2},\ldots,c_n$ is equal to
\begin{equation*}
  \label{eq:331}
  a_{n-u+1}-1,a_{n-u+2}-2,a_{n-u+3}-2,\ldots,a_n-2.
\end{equation*}
Then
\begin{equation*}
  \label{eq:329}
  \underline C(\bc_n)<\left\{\alignedat2
  &S(\bc_n)+D_{n+s}\quad&&\text{if $t=0$}\\
  &S(\bc_n)+D_{n+1}+D_{n+s}\quad&&\text{if $t>0$}
  \endalignedat\right.
\end{equation*}
and
\begin{equation*}
  \label{eq:330}
  \overline C(\bc_n)>\left\{\alignedat2
  &S(\bc_n)+D_n-D_{n+s-N}\quad&&\text{if $u=0$}\\
  &S(\bc_n)+D_{n+N+1}\quad&&\text{if $u>0$}
  \endalignedat\right.
\end{equation*}
\end{lem}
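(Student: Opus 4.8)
The plan is to estimate $\underline C(\bc_n)$ and $\overline C(\bc_n)$ by finding, inside $C(\bc_n)$, a specific extendable prefix that is respectively as small or as large as the digit restrictions of $F(\alpha,s)$ allow. Since $\underline C(\bc_n)$ is the infimum over all $\gamma\in F(\bc_n)$, it suffices to exhibit \emph{one} admissible continuation $c_{n+1},c_{n+2},\ldots$ whose Davenport sum is below the claimed bound; similarly for $\overline C(\bc_n)$ by exhibiting a large admissible continuation. The main work is purely combinatorial: decide which of $c_{n+1},\ldots$ may be taken to be $0$ (for the lower estimate) or as large as possible (for the upper estimate) without creating a forbidden block of the form \eqref{eq:69}, \eqref{eq:70}, or a run of $s+1$ zeros, or a run of the $E$-type block, and then bound the resulting tail $\sum_{k>n}c_kD_k$ using $D_{k+1}<D_k$ and the geometric decay $D_{k+N}\le R^{?}D_k$ implicit in \eqref{eq:26}.

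For the lower bound on $\underline C(\bc_n)$: I want the smallest admissible $\gamma$ extending $\bc_n$, so I push zeros as far right as the ``no $s+1$ consecutive zeros'' rule permits. If $t=0$ (so $c_n\ne0$), I may set $c_{n+1}=\cdots=c_{n+s}=0$ and then must make $c_{n+s+\cdot}\ne 0$; but here the paper wants the \emph{strict} inequality $\underline C(\bc_n)<S(\bc_n)+D_{n+s}$, which follows once we append at least one more zero beyond the prefix and then a minimal nonzero digit: the tail sum is at most $(a_{n+s+1}-1)D_{n+s+1}+\cdots$, which is strictly less than $D_{n+s}$ by the Ostrowski-type identity \eqref{eq:31} applied to the shifted sequence. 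If $t>0$, the trailing zeros of $\bc_n$ already use up part of the zero-budget, so I can only afford to append roughly $s-t$ more zeros before a forced nonzero digit; placing that nonzero digit then contributes a term of size $\le D_{n+1}$ (crudely), and continuing with zeros again yields the bound $S(\bc_n)+D_{n+1}+D_{n+s}$. The bookkeeping of exactly how many zeros are available, given $t$, is the fiddly part but is elementary.

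For the upper bound on $\overline C(\bc_n)$: dually, I want the largest admissible $\gamma$, so I take $c_{n+1}=a_{n+1}-1$, $c_{n+2}=a_{n+2}-2$, and continue with $a_{n+j}-2$ as long as this does not complete a forbidden $E$-block of length $s+1$ (for membership in $F(\alpha,s)$ the relevant restriction is on blocks of the form $a_i-1,a_{i+1}-2,\ldots,a_{i+s}-2$). If $u=0$ we may run the $(a_{n+j}-2)$-tail for close to $s$ steps; since $S(\bc_n)+D_n - \sum_{j\ge1}(\text{shortfall at }j)$ and each shortfall is a single unit $D_{n+j}$ at the $a_i-1$ position plus one forced drop near step $s$, one gets $\overline C(\bc_n)>S(\bc_n)+D_n-D_{n+s-N}$, the $-N$ slack absorbing the forced break and the condition $s\ge N$ guaranteeing $s-N\ge 0$. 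If $u>0$, the prefix already ends in a near-maximal $E$-type run, so we cannot extend it upward at all and must instead break with a smaller digit; the best we can then do is on the order of $S(\bc_n)+D_{n+N+1}$, where again the constant $N$ comes from the maximal block of $2$'s in $(a_i)$.

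The step I expect to be the genuine obstacle is the \emph{exact} combinatorial accounting in the $t>0$ and $u>0$ cases: one must verify that the trailing pattern of $\bc_n$ (all-zero tail of length $t$, or $E$-type tail of length $u$) really does force the appended continuation to ``turn around'' after precisely the claimed number of steps, and that the resulting extremal $\gamma$ is genuinely admissible for $F(\alpha,s)$ (no forbidden block is inadvertently created at the seam between $\bc_n$ and the appended tail, using Lemma~\ref{thm:hdjd} and the bound $s\ge N$). Once the admissible extremal continuation is identified, converting it into the stated inequality is just a geometric-series estimate via \eqref{eq:31} and \eqref{eq:26}, which I would not write out in detail.
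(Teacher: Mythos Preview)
Your strategy matches the paper's: exhibit explicit admissible continuations of $\bc_n$ that are near-minimal (respectively near-maximal) and bound their tail sums. For $\underline C(\bc_n)$ you have it right; the paper uses exactly the continuation
\[
c_1,\ldots,c_n,\underbrace{0,\ldots,0}_{s-t},1,\underbrace{0,\ldots,0}_{s},1,\underbrace{0,\ldots,0}_{s},1,\ldots
\]
and then crudely bounds $D_{n+s-t+1}\le D_{n+1}$ when $t>0$, with the remaining tail absorbed by a single $D_{n+s}$ via the geometric decay from \eqref{eq:26}.

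For $\overline C(\bc_n)$, however, your sketch misses the mechanism that actually produces the constants $N$ in the bounds. The ``forced break'' cannot be placed arbitrarily: to drop below the pattern $a_i-2$ you must set some $b_i=a_i-3$, which requires $a_i\ge 3$. The paper makes this precise by defining a sequence $k(0)=n-u$ and, inductively, $k(j)$ the \emph{largest} index in $[k(j-1)+2,\,k(j-1)+s+1]$ with $a_{k(j)}\ge 3$; the definition of $N$ then forces $k(j)\ge k(j-1)+s-N+2$, and it is exactly this lower bound on $k(1)$ that gives $D_{k(1)}\le D_{n+s-N}$ in the $u=0$ case and yields the $D_{n+N+1}$ term in the $u>0$ case. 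Your phrase ``the $-N$ slack absorbing the forced break'' does not capture this; without pinning the break to a position with $a_i\ge 3$ you cannot justify either inequality.

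Your description of the $u>0$ case is also off. You say ``we cannot extend it upward at all,'' but in fact one \emph{does} extend: the prefix $a_{n-u+1}-1,a_{n-u+2}-2,\ldots,a_n-2$ is continued with $a_{n+1}-2,a_{n+2}-2,\ldots$ (not $a_{n+1}-1$, which would create a forbidden Davenport block of type \eqref{eq:69}) until the first $k(1)>n$ with $a_{k(1)}\ge 3$, where one drops to $a_{k(1)}-3$. The bound $\overline C(\bc_n)>S(\bc_n)+D_{n+N+1}$ then comes from the guarantee that some $a_i\ge 3$ occurs with $n+1\le i\le n+N+1$. So the obstacle you flagged is real, and the missing ingredient is precisely this $k(j)$-construction.
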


\begin{proof} Write $C=C(\bc_n)$ and note that 
$\underline C=\underline C(\bc_n)$ is the number $\beta$
whose Davenport coefficients $(b_i)$ are of the form
\begin{equation}
  \label{eq:332}
  c_1,c_2,\ldots,c_n,\underbrace{0,\ldots,0}_{s-t},1,\underbrace{0,\ldots,0}_{s},1,
  \underbrace{0,\ldots,0}_s,1,\ldots.
\end{equation}
Note that $t\le s$ else $c_1,c_2,\ldots,c_n$ ends with
more than $s$ consecutive zeros and $C=\emptyset$. In other words, 
\begin{equation*}
  \label{eq:333}
  \underline C=\sum^n_{k=1}c_kD_k+D_{n+s-t+1}+D_{n+2s-t+2}+D_{n+3s-t+3}+\ldots,
\end{equation*}
and
\begin{equation*}
  \label{eq:334}
  \underline C\le\left\{\alignedat2
  &S(\bc_n)+D_{n+s+1}+D_{n+2s+2}+D_{n+3s+3}+\ldots
    \quad&&\text{if $t=0$}\\
  &S(\bc_n)+D_{n+1}+D_{n+s+2}+D_{n+2s+3}+\ldots
    \quad&&\text{if $t>0$}\endalignedat\right.
\end{equation*}
Further, since $s\ge N$ we know
\begin{equation*}
  \label{eq:335}
  D_{n+s}>D_{n+s+1}+D_{n+2s+2}+D_{n+3s+3}+\ldots,
\end{equation*}
and 
\begin{equation*}
  \label{eq:336}
  D_{n+s}>D_{n+s+2}+D_{n+2s+3}+D_{n+3s+4}+\ldots, 
\end{equation*}
and the truth of the first statement of the lemma is evident.

We describe the Davenport expansion of $\overline C=\overline C(\bc_n)$ next. 
Let $k(0)=n-u$ and  inductively define the sequence
$k(1),k(2),k(3),\ldots$ by choosing $k(i)$ to be the largest integer such that
\begin{equation*}
  \label{eq:337}
  k(i-1)+2\le k(i)\le k(i-1)+s+1\qquad\text{and}\qquad a_{k(i)}\ge3.
\end{equation*}
This is possible by the properties of $a_n$ as enunciated in
Lemma~\ref{thm:616161} for $\alpha=\alpha^-$ and $\alpha=\alpha^+$.  Further,
\begin{equation*}
  \label{eq:338}
  k(i)\ge k(i-1)+s-N+2
\end{equation*}
because if not $a_k=2$ for all $k$ between and including
$k(i-1)+s-N+2$ and $k(i-1)+s+1$ contrary to the definition of $N$.
Now  $\overline C$ is the number $\beta$ whose
Davenport coefficients $(b_i)$ are  defined by
\begin{equation*}
  \label{eq:339}
  b_i=
  \begin{cases}
c_i&\text{ if $i\le k(0)$}\\
a_i-1&\text{ if $i=k(j)+1$ for some $j\ge0$}\\
a_i-2&\text{ if $k(j)+1<i<k(j+1)$ for some $j\ge0$}\\
a_i-3&\text{ if $i=k(j)$ for some $j\ge1$.}
  \end{cases}
\end{equation*}
These are clearly Davenport coefficients, and the sequence contains no block
$b_i,b_{i+1},\ldots,b_{i+s}$ of the form \eqref{eq:225} nor does it contain a
block of $b_i,b_{i+1},\ldots,b_{i+s}$ consisting solely of zeros. We conclude
that $\beta\in F(\alpha,s)$. It is also fairly clear that the sequence
$b_1,b_2,b_3,\ldots$ begins with $c_1,c_2,\ldots,c_n$.

It remains to show that no other element of $C(\bc_n)$ is larger
than $\beta$.  If that were the case, and there were some $\beta'\in
C(\bc_n)$ with Davenport coefficients $(b'_i)$ such
that $\beta'>\beta$. However, the form of the definition of $\beta$ prohibits any
possible increase in the values of the Davenport coefficients  while staying a
member of $F(\alpha,s)$ and starting with $c_{1}, c_{2},\ldots, c_{n}$. 
Evidently $\underline C=\sum^\infty_{k=1}b_kD_k$.
By truncating this series at the term with index $k(1)+1$
and making some minor rearrangements we find that
\begin{equation*}
  \label{eq:344}
  \overline C>\sum^{k(0)}_{l=1}c_lD_l+\sum^{k(1)+1}_{l=k(0)+1}(a_l-2)D_l
  +D_{k(0)+1}-D_{k(1)}+D_{k(1)+1}.
\end{equation*}
We consider two cases.
First we suppose $u=0$ and hence $k(0)=n$.
In this case, since
\begin{equation*}
  \label{eq:345}
  D_n<\sum^{k(1)+1}_{l=n+1}(a_l-2)D_l+D_{n+1}+D_{k(1)+1}
\end{equation*}
we obtain 
$\overline C>S(\bc_n)+D_n-D_{k(1)}$.
It is easy to deduce from \eqref{eq:332} with $i=1$ that
$D_{k(1)}>D_{n+s-N}$ and the second statement of the lemma is proved.
Now we suppose $u>0$ and hence $k(0)<n$. Since $k(1)\ge n+1$, 
\begin{equation*}
  \label{eq:346}
  \overline C>\sum^n_{l=1}c_lD_l
  +\sum^{k(1)+1}_{l=n+1}(a_l-2)D_l-D_{k(1)}+D_{k(1)+1}.
\end{equation*}
As $a_{k(1)}\ge3$ and $a_{k(1)+1}\ge2$ we have
\begin{equation*}
  \label{eq:347}
  \overline C>\sum^n_{l=1}c_lD_l+\sum^{k(1)-1}_{l=n+1}(a_l-2)D_l+D_{k(1)+1}.
\end{equation*}
The definition of $N$ implies there is some $i$
with $n+1\le i\le n+N+1$ such that $a_i\ge3$ and so
\begin{equation*}
  \label{eq:348}
  \sum^{n+N+1}_{l=n+1}(a_l-2)D_l\ge D_{n+N+1}.
\end{equation*}
It follows that if $k(1)-1\ge n+N+1$ then
the second statement of the lemma is true.
If on the other hand $k(1)<n+N+1$ then $D_{k(1)+1}\ge D_{n+N+1}$ and
again the truth of the second statement is clear.
This completes the proof the lemma.
\end{proof}

A key remark about the $F(\alpha,s)$ construction, that will not be
true for the case if $E(\alpha,s)$, is that, at least generically, the
deleted intervals resulting from the ``zeros'' condition and the
``$a_n-2$'' condition in this Cantor construction have the property
that their left and right endpoints respectively are
$S(c_1,...,c_n+1)$.

Next we deal with the set $E(\alpha,s)$. This is a little more
complicated; the two restrictions on the
Davenport coefficients of the elements of $E(\alpha,s)$ no longer correspond to
a single gap in the dissection of $G(\alpha,s)$.    We make the following
definition.
\begin{defn}  $A(\;)=[0, 1-D_{1}]$ and $B(\;)=[1-D_{1},1]$.
For each sequence $\bc_n=c_1,c_2,\ldots,c_n$ of positive integers, we
define  
 $A(\bc_n)$ to be the smallest closed interval containing
 $E(\alpha,s)\cap [S(\bc_n),S(\bc_n)+D_n-D_{n+1}]$, and $B(\bc_n)$  to
 be the smallest closed interval containing $E(\alpha,s)\cap [S(\bc_n)+D_{n}-D_{n+1},S(\bc_n)+D_n]$.
\end{defn}

The dissection of $G(\alpha,s)$ begins by replacing $G(\alpha,s)$
with the pair of intervals $A(\;)$ and $B(\;)$.
The next step is the substitution
\begin{equation*}
  \label{eq:308}
  \begin{aligned}[t]
  A(\;)&\to  \{A(0),B(0),A(1),B(1),\ldots,A(a_1-3),B(a_1-3),A(a_1-2)\}   \\
  B(\;)&\to  \{B(a_1-2),\;A(a_1-1)\}.
  \end{aligned}
\end{equation*}

The $n$-th step of the dissection is
\begin{equation}
  \label{eq:310}
  \begin{aligned}
\emptyset\neq A(\bc_{n})&\to \{A(\bc_{n+1}):\;0\le c_{n+1}\le a_{n+1}-2\}\\
&\qquad \cup\{B(\bc_{n+1}):\;0\le c_{n+1}\le a_{n+1}-3\}\\
\emptyset\neq A(\bc_{n})&\to  \{B(c_1,c_2,\ldots,c_n,a_{n+1}-2),\;
  A(c_1,c_2,\ldots,c_n,a_{n+1}-1)\}.
\end{aligned}
\end{equation}
where again we use the notation $\bc_n=c_1,c_2,\ldots,c_n)$.

We note that
$A(\bc_n)$ and $B(\bc_n)$
are the smallest closed intervals containing
the collections  \eqref{eq:310} at the previous level.
This follows because
\begin{equation*}
  \label{eq:312}
  S(\bc_n)+D_n-D_{n+1}
  =S(c_1,c_2,\ldots,c_n,a_{n+1}-2)+D_{n+1}-D_{n+2}.
\end{equation*}
For the moment, we write
\begin{equation*}
  \label{eq:315}
  A=A(\bc_n)\qquad B=B(\bc_n)\qquad
 S=S(\bc_n).
\end{equation*}
Now let $\beta\in E(\alpha,s)$ and suppose its sequence of Davenport coefficients
$(b_i)$ begins with $c_1,c_2,\ldots,c_n$.
Then 
the block $b_{n+1},b_{n+2},\ldots,b_{n+s+1}$ does not
consist entirely of zeros, and so $b_i\ge1$ and $\beta\ge S+D_i$
for some $i$ with $n+1\le i\le n+s+1$.
Hence $\beta\ge S+D_{n+s+1}$.
Since $A$ is the smallest closed interval containing all such numbers $\beta$
which also satisfy $\beta\le S+D_n-D_{n+1}$
it follows that
\begin{equation}
  \label{eq:316}
  S+D_{n+s+1}\le\underline A\leq \overline A\le S+D_n-D_{n+1}.
\end{equation}
In particular $|A|\le D_n$. 
If, on the other hand,  $\beta>S+D_n-D_{n+1}$, there is $i\ge n+1$ such that
the block $b_{n+1},b_{n+2},\ldots,b_i$ is of the form
\begin{equation*}
  \label{eq:317}
  a_{n+1}-2,a_{n+2}-2,\ldots,a_{i-1}-2,a_i-1.
\end{equation*}
We  conclude that
\begin{equation}
  \label{eq:320}
  S+D_n-D_{n+1}+D_{n+s+1}\le\underline B\leq
  \overline B\le S+D_n.
\end{equation}

In fact, 
  \begin{align*}
  \label{eq:321}
    A(\bc_{n+1})&\subset   [S+c_{n+1}D_{n+1}+D_{n+s+2},\;S+(c_{n+1}+1)D_{n+1}-D_{n+2}]\\
    B(\bc_{n+1})&\subset
    [S+(c_{n+1}+1)D_{n+1}-D_{n+2}+D_{n+s+2},\;S+(c_{n+1}+1)D_{n+1}].
  \end{align*}

We note that all such intervals where $0\le c_{n+1}<a_{n+1}$ are disjoint, and
since
\begin{equation}
  \label{eq:323}
   E(\alpha,s)=\bigcap^\infty_{n=1}\bigcup\{
  A(\bc_n)\ne\emptyset,\;B(\bc_n)\ne\emptyset
  :\;0\le c_i<a_i\},
\end{equation}
it is totally disconnected. Again the gaps arise because of the constraints on
digits in the definition of $E(\alpha,s)$.

Now we need to find estimates for the endpoints of the intervals $A(\bc_n)$
and $B(\bc_n)$, just as we have for $C(\bc_n)$ in
Lemma~\ref{thm:ousds}. 
\begin{lem}
\label{thm:yahdd}
Let $s\ge N$ and $A(\bc_n)\ne\emptyset$.
Then
\begin{equation*}
  \label{eq:369}
  \underline A(\bc_n)<S(\bc_n)+D_n-D_{n+1}-D_{n+3N}
\end{equation*}
and
\begin{equation*}
  \label{eq:370}
  \overline A(\bc_n)=S(\bc_n)+D_n-D_{n+1}.
\end{equation*}
Further,
\begin{equation*}
  \label{eq:371}
  \underline A(\bc_n)<S(\bc_n)+D_{n+s}
\end{equation*}
whenever $n=0$ or $B(c_1,c_2,\ldots,c_{n-1},c_n-1)\neq \emptyset$. 
\end{lem}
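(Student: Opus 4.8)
Throughout I follow the strategy of the proof of Lemma~\ref{thm:ousds}: for each endpoint of $A(\bc_n)$ I would write down, in terms of its Davenport coefficients, a specific element of $E(\alpha,s)$ whose expansion begins with $c_1,\ldots,c_n$ and which realises or approximates that endpoint, and then read off the stated inequalities. The standing hypothesis $A(\bc_n)\neq\emptyset$ is used everywhere: it produces an element of $E(\alpha,s)$ whose expansion begins $c_1,\ldots,c_n$, and hence forces the block $c_1,\ldots,c_n$ to contain no sequence \eqref{eq:225} and to end with at most $s$ zeros, which is exactly what keeps the constructions below inside $E(\alpha,s)$ at the junction between $c_1,\ldots,c_n$ and the tail that is appended.

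I begin with $\overline A(\bc_n)=S(\bc_n)+D_n-D_{n+1}$. The bound $\overline A(\bc_n)\le S(\bc_n)+D_n-D_{n+1}$ is automatic, $A(\bc_n)$ being by definition a closed subinterval of $[S(\bc_n),S(\bc_n)+D_n-D_{n+1}]$. For the reverse bound I would use the identity $D_n-D_{n+1}=\sum_{k\ge n+1}(a_k-2)D_k$, obtained by iterating $D_j-D_{j+1}=(a_{j+1}-2)D_{j+1}+(D_{j+1}-D_{j+2})$; thus $S(\bc_n)+D_n-D_{n+1}$ is the Davenport sum of the ``maximal'' string $c_1,\ldots,c_n,a_{n+1}-2,a_{n+2}-2,\ldots$ (when $c_1,\ldots,c_n$ does not already end with a block $a_i-1,a_{i+1}-2,\ldots,a_n-2$ this string is a genuine Davenport expansion lying in $E(\alpha,s)$ and the endpoint is attained exactly). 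In general, exactly as in Lemma~\ref{thm:ousds}, for each large $K$ with $a_{n+K}\ge3$ (such $K$ exist by Lemma~\ref{thm:616161}) I would replace the entry in position $n+K$ by $a_{n+K}-3$; this single ``break'' destroys the forbidden blocks \eqref{eq:69} and \eqref{eq:70}, so the modified string is a Davenport expansion, and it lies in $E(\alpha,s)$ because $s\ge N$ keeps all runs of zeros inside the tail short while $A(\bc_n)\neq\emptyset$ keeps the run straddling position $n$ of length $\le s$ (if the break at $n+K$ would itself create a run of length $s+1$, one shifts it forward by at most $N$ positions, onto the value $1$ produced by the next $a=3$). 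These elements of $E(\alpha,s)\cap[S(\bc_n),S(\bc_n)+D_n-D_{n+1}]$ have values $S(\bc_n)+D_n-D_{n+1}-D_{n+K}$, which increase to $S(\bc_n)+D_n-D_{n+1}$ as $K\to\infty$; hence the claim.

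Given the value of $\overline A(\bc_n)$, the inequality $\underline A(\bc_n)<S(\bc_n)+D_n-D_{n+1}-D_{n+3N}$ amounts to $|A(\bc_n)|>D_{n+3N}$, so it suffices to exhibit one point of $E(\alpha,s)$ with initial block $c_1,\ldots,c_n$ at distance more than $D_{n+3N}$ below $S(\bc_n)+D_n-D_{n+1}$. For this I would take the near-maximal string above (break placed beyond position $n+3N$) and in addition lower by $1$ the entry in the first position $n+j$ with $1\le j\le 2N$ at which $a_{n+j}\ge3$ and at which the decrease keeps every zero-run of length $\le s$ (the same ``$t+j=s$'' edge case forces one to step forward by $\le N$ further positions). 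Since $a_{n+j}\ge3$ this lowers the value by $D_{n+j}\ge D_{n+2N}>D_{n+3N}$, which is what is needed, and the resulting string is still a Davenport expansion inside $E(\alpha,s)$.

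For the last inequality, note that $B(c_1,\ldots,c_{n-1},c_n-1)\neq\emptyset$ forces $c_n\ge1$, so $c_1,\ldots,c_n$ ends with no zero (for $n=0$ the point $S(\bc_0)=0$ makes the argument trivial). Then $c_1,\ldots,c_n,\underbrace{0,\ldots,0}_{s},1,\underbrace{0,\ldots,0}_{s},1,\ldots$ is a Davenport expansion of a member of $E(\alpha,s)$ --- every run of zeros has length exactly $s$, and $s\ge N$ together with the spacing $s+1$ of the $1$'s rules out \eqref{eq:69}, \eqref{eq:70} and \eqref{eq:225} --- with value $S(\bc_n)+D_{n+s+1}+D_{n+2s+2}+\cdots<S(\bc_n)+D_{n+s}$, the last step being the inequality $D_{m+s}>D_{m+s+1}+D_{m+2s+2}+\cdots$ (valid for $s\ge N$) already used in Lemma~\ref{thm:ousds}, applied with $m=n$. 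This is the smallest element of $E(\alpha,s)$ beginning with $c_1,\ldots,c_n$; since $A(\bc_n)\neq\emptyset$ guarantees that some such element lies in $[S(\bc_n),S(\bc_n)+D_n-D_{n+1}]$, so does this smallest one, and therefore $\underline A(\bc_n)$ does not exceed its value.

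The main obstacle, as in Lemma~\ref{thm:ousds}, is purely the book-keeping: keeping each constructed digit-string simultaneously a legitimate Davenport expansion and an element of $E(\alpha,s)$, the only genuine difficulty being the interaction, at the junction just after position $n$, of the trailing zeros of $c_1,\ldots,c_n$ with the runs of $2$'s among the partial quotients $a_i$. The hypothesis $A(\bc_n)\neq\emptyset$ is precisely what makes this controllable, and the constant $3N$ is chosen generously so as to absorb these edge cases with room to spare.
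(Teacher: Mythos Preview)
Your argument is correct and, for $\overline A(\bc_n)$ and for the final inequality, essentially identical to the paper's: in each case the paper exhibits the same element (the tail $a_{n+1}-2,a_{n+2}-2,\ldots$ for the supremum, the periodic minimum $c_1,\ldots,c_n,0^{s-t},1,0^s,1,\ldots$ for the infimum) and invokes the same tail-sum estimate $D_{n+s}>D_{n+s+1}+D_{n+2s+2}+\cdots$. Your junction book-keeping is more cautious than needed: the remark following \eqref{eq:225} explicitly admits tails $a_i-1,a_{i+1}-2,a_{i+2}-2,\ldots$ into $E(\alpha,s)$, and one checks that $A(\bc_n)\ne\emptyset$ forces the trailing zeros of $c_1,\ldots,c_n$ together with the initial run of $a_k=2$ after position $n$ to total at most $s$ (otherwise every candidate is pushed above $S(\bc_n)+D_n-D_{n+1}$), so the maximal string lies in $E(\alpha,s)$ as written, without any inserted break.

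For the first inequality your tactic differs from the paper's. Rather than build a second element below the threshold, the paper bounds the already-identified minimum $\underline A$ directly: having written its expansion $c_1,\ldots,c_n,0^{s-t},1,0^s,1,\ldots$, it locates the first index $j>n$ at which $b_j\le a_j-3$ (necessarily $j\le n+2N$, since each block of $N$ partial quotients contains some $a_k\ge3$ while the $b_k$ here are $0$ or $1$), bounds $\underline A\le S(\bc_n)+\sum_{k=n+1}^{n+2N}(a_k-2)D_k$, and then telescopes this as $S(\bc_n)+D_n-D_{n+1}-\sum_{k>n+2N}(a_k-2)D_k\le S(\bc_n)+D_n-D_{n+1}-D_{n+3N}$. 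This is marginally shorter because it reuses the explicit form of $\underline A$ already on the table, whereas your route requires verifying that a second constructed string lies in $E(\alpha,s)$; but both arguments reach the same bound by the same telescoping identity.
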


\begin{proof}
Write $A=A(\bc_n)$.
We note first that  $\underline A$ is the number $\beta$ whose Davenport coefficients
$(b_i)$ are of the form
\begin{equation*}
  \label{eq:372}
  c_1,c_2,\ldots,c_n,\underbrace{0,\ldots,0}_{s-t},1,\underbrace{0,\ldots,0}_{s},1,
  \underbrace{0,\ldots,0}_{s},1,\ldots.
\end{equation*}
where $t$ is the largest integer with $0\le t\le n$ for which
$c_{n-t+1},c_{n-t+2},\ldots,c_n$ are all zero, and observe that 
there is some $j$ satisfying
\begin{equation*}
  \label{eq:373}
  n+1\le j\le n+s-t+N+1
\end{equation*}
such that $b_j\le a_j-3$ and $b_i=a_i-2$ for all $i$ with $n+1\le i\le j-1$.
It follows that
\begin{equation}
  \label{eq:376}
  \underline A\le
  \sum^n_{k=1}c_kD_k+\sum^{j-1}_{k=n+1}(a_k-2)D_k+(a_j-3)D_j+D_j.
\end{equation}
Since $j\le n+2N$,
\begin{equation*}
  \label{eq:377}
  \underline A\le\sum^n_{k=1}c_kD_k+\sum^{n+2N}_{k=n+1}(a_k-2)D_k.
\end{equation*}
We know
\begin{equation*}
  \label{eq:378}
  \sum^{n+2N}_{k=n+1}(a_k-2)D_k
  =D_n-D_{n+1}-\sum^\infty_{k=n+2N+1}(a_k-2)D_k
\end{equation*}
and since the definition of $N$ implies there is some $k$ with
$n+2N<k\le n+3N$ such that $a_k\ge3$ we conclude that
\eqref{eq:376} does not exceed $D_n-D_{n+1}-D_{n+3N}$.
The truth of the first statement of the lemma is now evident.

Now we redefine  $\beta$ as 
\begin{equation*}
  \label{eq:379}
  \beta=S(\bc_n)+D_n-D_{n+1}
\end{equation*}
and  observe that  it has Davenport coefficients 
\begin{equation*}
  \label{eq:380}
  c_1,c_2,\ldots,c_n,a_{n+1}-2,a_{n+2}-2,a_{n+3}-2,\ldots.
\end{equation*}
This contains no block $b_i,b_{i+1},\ldots,b_j$ of the form
\eqref{eq:69} nor a block $b_i,b_{i+1},\ldots,b_{i+s}$ of the form
\eqref{eq:225} and so $\beta\in E(\alpha,s)$.

Now suppose $n=0$ or $B(c_1,c_2,\ldots,c_n-1)$ is non-empty.
If $B(c_1,c_2,\ldots,c_n-1)\ne\emptyset$ then $c_n\ge1$ and so $t=0$.
Obviously $t$ is also zero if $n=0$.
As a result,
\begin{equation*}
  \label{eq:382}
  \underline A=\sum^n_{k=1}c_kD_k+D_{n+s+1}+D_{n+2s+2}+D_{n+3s+3}+\ldots.
\end{equation*}
Because $s\ge N$ we know that
\begin{equation*}
  \label{eq:383}
  D_{n+s}\ge D_{n+s+1}+D_{n+2s+2}+D_{n+3s+3}+\ldots,
\end{equation*}
and this is enough to complete the proof. 
\end{proof}

\begin{lem}
\label{thm:oaqpa}
Let $s\ge N$ and $B(\bc_n)\ne\emptyset$.
Then
\begin{align*}
  \label{eq:384}
  \underline B(\bc_n)
  &<S(\bc_n)+D_n-D_{n+1}+D_{n+2}+D_{n+s},\\
  \overline B(\bc_n)&=S(\bc_n)+D_n.
\end{align*}
Further,
\begin{equation*}
  \label{eq:386}
  \underline B(\bc_n)
  <S(\bc_n)+D_n-D_{n+1}+D_{n+s}
\end{equation*}
whenever $n=0$ or $c_n\ne a_n-2$ and $A(\bc_n)$ is non-empty.
\end{lem}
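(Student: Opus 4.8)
The plan is to mimic the structure of the proofs of Lemma~\ref{thm:ousds} and Lemma~\ref{thm:yahdd}: identify the extremal element of $B(\bc_n)$ by writing down its Davenport coefficients explicitly, then estimate the resulting Davenport sum. For the right endpoint $\overline B(\bc_n)$, the largest element of $E(\alpha,s)$ lying in the block $[S(\bc_n)+D_n-D_{n+1},S(\bc_n)+D_n]$ is the number whose coefficients are $c_1,\ldots,c_n$ followed by $a_{n+1}-1,a_{n+2}-1,a_{n+3}-1,\ldots$; that is, $\beta=S(\bc_n)+\sum_{k>n}(a_k-1)D_k$. Using the identity \eqref{eq:31} (telescoped to its infinite form), this tail sums to exactly $D_n$, so $\overline B(\bc_n)=S(\bc_n)+D_n$. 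One must check this $\beta$ actually lies in $E(\alpha,s)$: the tail $a_{n+1}-1,a_{n+2}-1,\ldots$ contains no block of $s+1$ zeros (the entries are all $\ge1$) and no block of the forbidden form \eqref{eq:225} ending in $a_j-1$ preceded by $a_i-2$'s, because consecutive entries are $a_k-1$, not $a_k-2$; this is exactly the phenomenon noted after the definition of $E(\alpha,s)$ that tails of the form $a_i-1,a_{i+1}-2,\ldots$ are permitted. That no element of $B(\bc_n)$ exceeds this $\beta$ follows because any increase of a Davenport coefficient beyond $a_k-1$ is impossible within the constraint $b_k<a_k$, so $\overline B=S(\bc_n)+D_n$ as claimed.

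For the left endpoint, $\underline B(\bc_n)$ is the smallest element of $E(\alpha,s)$ whose coefficients start with $c_1,\ldots,c_n$ and which lies in the short-interval block, i.e.\ for which $b_{n+1},\ldots,b_j$ is of the form $a_{n+1}-2,\ldots,a_{j-1}-2,a_j-1$ for some $j$ (this is the characterization \eqref{eq:317} of elements above $S(\bc_n)+D_n-D_{n+1}$). To minimize, we want $j$ as small as possible and then the subsequent coefficients as small as possible. The smallest $j$ is $j=n+1$, giving initial tail $a_{n+1}-1$; but then to stay in $E(\alpha,s)$ we cannot have $s+1$ consecutive zeros following, so the next nonzero coefficient is forced at index $\le n+1+s$. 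This yields coefficients $c_1,\ldots,c_n,a_{n+1}-1,\underbrace{0,\ldots,0}_{s},1,\underbrace{0,\ldots,0}_{s},1,\ldots$, whose Davenport sum is $S(\bc_n)+(a_{n+1}-1)D_{n+1}+D_{n+s+2}+D_{n+2s+3}+\cdots$. Rewriting $(a_{n+1}-1)D_{n+1}=D_n-D_{n+1}+D_{n+2}$ via the recurrence $D_{n+2}=D_n-a_{n+1}D_{n+1}+$ (wait: from $q_{n+1}=a_{n+1}q_n-q_{n-1}$ one gets the analogous $\alpha$-identity $\alpha_{n+1}\alpha_{n+2}=a_{n+1}\alpha_{n+1}-1$ scaled by $D_n$, i.e.\ $D_{n+2}=a_{n+1}D_{n+1}-D_n$, so $(a_{n+1}-1)D_{n+1}=D_{n+2}+D_n-D_{n+1}$), and bounding the tail $D_{n+s+2}+D_{n+2s+3}+\cdots<D_{n+s}$ using $s\ge N$ (exactly the geometric estimate already used in the two preceding lemmas), we obtain $\underline B(\bc_n)<S(\bc_n)+D_n-D_{n+1}+D_{n+2}+D_{n+s}$, the first claim.

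For the sharper bound, suppose $n=0$, or $c_n\ne a_n-2$ and $A(\bc_n)\ne\emptyset$. The point is that in this case there is no obstruction to choosing $j>n+1$: when $c_n\ne a_n-2$, prepending a run $a_{n+1}-2$ of length one (i.e.\ taking $b_{n+1}=a_{n+1}-2$, $b_{n+2}=a_{n+2}-1$) does not create a forbidden block \eqref{eq:225} of length $s+1$ reaching back through $c_n$ — wait, more carefully, since $A(\bc_n)\ne\emptyset$ there is a legitimate continuation $b_{n+1}=a_{n+1}-2,\ldots$ agreeing with an element of $E(\alpha,s)$ up to the point where it drops below $a_k-2$, and we can graft onto it to produce an element of $B(\bc_n)$ whose value is smaller. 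Concretely the minimizer now has coefficients $c_1,\ldots,c_n,a_{n+1}-2,\underbrace{0,\ldots,0}_{?},\ldots$ arranged so that its sum is $S(\bc_n)+(a_{n+1}-2)D_{n+1}+(\text{small}) = S(\bc_n)+D_n-D_{n+1}-D_{n+2}+\cdots$; collecting the remaining forced digits and bounding their contribution by $D_{n+s}$ (again via $s\ge N$) gives $\underline B(\bc_n)<S(\bc_n)+D_n-D_{n+1}+D_{n+s}$.

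The main obstacle I expect is the bookkeeping in the sharper bound: one must be careful about which configurations of initial digits $c_1,\ldots,c_n$ permit grafting a longer run of $a_k-2$'s without violating the $E(\alpha,s)$ constraints — precisely the role of the hypotheses $c_n\ne a_n-2$ and $A(\bc_n)\ne\emptyset$ (or $n=0$) — and to verify the candidate extremal sequence genuinely lies in $E(\alpha,s)$ and genuinely starts with $\bc_n$. The analytic estimates themselves are routine geometric-series bounds identical in spirit to those in Lemmas~\ref{thm:ousds} and \ref{thm:yahdd}, relying only on \eqref{eq:26}, $s\ge N$, and the $D$-recurrence; the combinatorial identification of the extremal element is where the care is needed.
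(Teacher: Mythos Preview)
Your treatment of the first inequality is essentially the paper's: exhibit the element with coefficients $c_1,\ldots,c_n,a_{n+1}-1,\underbrace{0,\ldots,0}_{s},1,\ldots$, bound the tail by $D_{n+s}$ using $s\ge N$, and rewrite $(a_{n+1}-1)D_{n+1}=D_n-D_{n+1}+D_{n+2}$ via the recurrence $a_{n+1}D_{n+1}=D_n+D_{n+2}$. That part is fine.

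There is a genuine error in your identification of $\overline B(\bc_n)$. The sequence $a_{n+1}-1,a_{n+2}-1,a_{n+3}-1,\ldots$ is \emph{not} a legal Davenport coefficient sequence: two consecutive entries $a_k-1,a_{k+1}-1$ already form a forbidden block of type~\eqref{eq:69} (take $j=i+1$). Moreover your claimed identity $\sum_{k>n}(a_k-1)D_k=D_n$ is false; a quick check with constant $a_k=3$ gives $\alpha=(3-\sqrt5)/2$ and $\sum_{k>n}2\alpha^k=2\alpha^{n+1}/(1-\alpha)\ne\alpha^n$. The correct extremal element has tail $a_{n+1}-1,a_{n+2}-2,a_{n+3}-2,\ldots$ (the sequence identified in~\eqref{eq:decimal_ambig} with $c_1,\ldots,c_n+1,0,0,\ldots$), and it is the telescoping identity $\sum_{k>n}(a_k-2)D_k=D_n-D_{n+1}$ that gives the value $S(\bc_n)+D_n$. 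You even cite ``tails of the form $a_i-1,a_{i+1}-2,\ldots$'' in the next sentence, so this looks like a slip, but as written the argument does not go through.

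The sharper bound is where the real gap lies. Your proposed minimizer $c_1,\ldots,c_n,a_{n+1}-2,0,\ldots$ has value at most $S(\bc_n)+(a_{n+1}-2)D_{n+1}+D_{n+2}=S(\bc_n)+D_n-2D_{n+1}+2D_{n+2}<S(\bc_n)+D_n-D_{n+1}$, so it is not in $B(\bc_n)$ at all. The point you are missing is that elements of $B(\bc_n)$ must begin with a run $a_{n+1}-2,\ldots,a_{j-1}-2,a_j-1$ for some $j\ge n+1$, contributing $D_n-D_{n+1}+D_{j+1}$, and to minimize one wants $j$ as \emph{large} as possible. The $E(\alpha,s)$ constraint forbids $j>n+s$, while the hypothesis $c_n\ne a_n-2$ (or $n=0$) is precisely what allows the run to reach $j=n+s$ without creating a forbidden block of length $s+1$ reaching back through $c_n$. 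The actual minimizer therefore has initial tail $a_{n+1}-2,\ldots,a_{n+s-1}-2,a_{n+s}-1$ followed by $\underbrace{0,\ldots,0}_s,1,\ldots$, and summing gives $S(\bc_n)+D_n-D_{n+1}+D_{n+s+1}+(\text{tail})<S(\bc_n)+D_n-D_{n+1}+D_{n+s}$. You have the right intuition about the role of the hypotheses but the wrong candidate element, and without it the estimate cannot be completed.
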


\begin{proof}
As usual, we write $B=B(\bc_n)$ and 
observe that $B$ contains the number $\beta$ whose Davenport coefficients
$b_1,b_2,b_3,\ldots$ are equal to
\begin{equation*}
  \label{eq:387}
  c_1,c_2,\ldots,c_n,a_{n+1}-1,\underbrace{0,\ldots,0}_s,1
  ,\underbrace{0,\ldots,0}_s,1,\underbrace{0,\ldots,0}_s,1,\ldots.
\end{equation*}
Therefore
\begin{equation*}
  \label{eq:394}
  \underline B\le\sum^n_{k=1}c_kD_k+(a_{n+1}-1)D_{n+1}
  +D_{n+s+2}+D_{n+2s+3}+D_{n+3s+4}+\ldots.
\end{equation*}
We note that
\begin{equation*}
  \label{eq:395}
  D_{n+s}\ge D_{n+s+2}+D_{n+2s+3}+D_{n+3s+4}+\ldots.
\end{equation*}
The first inequality of the lemma then follows since
$a_{n+1}D_{n+1}=D_n+D_{n+2}$.

For the second statement of the lemma we consider $\overline B=\beta$ where
$\beta$ is 
\begin{equation}
  \label{eq:396}
  \beta=S(\bc_n)+D_n,
\end{equation}
so  that $\beta=\sum^\infty_{k=1}c_kD_k$ where
$b_1,b_2,b_3,\ldots$ is the sequence
\begin{equation*}
  \label{eq:397}
  c_1,c_2,\ldots,c_n,a_{n+1}-1,a_{n+2}-2,a_{n+3}-2,a_{n+4}-2,\ldots,
\end{equation*}
and the rest is clear. 

Now suppose either $n=0$ or $A(\bc_n)\ne\emptyset$ and $c_n\ne a_n-2$.
In this case $\underline B$ is the number $\beta$ whose
Davenport coefficient sequence  $(b_i)$  begins with
\begin{equation}
  \label{eq:398}
  c_1,c_2,\ldots,c_n,a_{n+1}-2,a_{n+2}-2,\ldots,a_{n+s-1}-2,a_{n+s}-1
\end{equation}
and continues with
\begin{equation}
  \label{eq:399}
  \underbrace{0,\ldots,0}_s,1,\underbrace{0,\ldots,0}_s,1,
  \underbrace{0,\ldots,0}_s,1,\ldots.
\end{equation}
Clearly $(b_i)$ is a sequence of  Davenport coefficients 
and $\beta\in E(\alpha,s)$.
Further, since $(b_i)$ begins with \eqref{eq:398},
\begin{equation*}
  \label{eq:401}
  \beta\ge\sum^n_{k=1}c_kD_k+\sum^{n+s-1}_{k=n+1}(a_k-2)D_k+(a_{n+s}-1)D_{n+s}.
\end{equation*}

As the sequence of Davenport coefficients  of $\underline B$
begins with \eqref{eq:398} and continues with \eqref{eq:399},
\begin{equation*}
  \label{eq:404}
  \underline B=\sum^n_{k=1}c_kD_k+\sum^{n+s}_{k=n+1}(a_k-2)D_k+D_{n+s}
  +D_{n+2s+1}+D_{n+3s+2}+\ldots.
\end{equation*}
By using the appropriate identities of Section~\ref{sec:3gfd} we obtain
\begin{equation*}
  \label{eq:405}
  \underline B=\sum^n_{k=1}c_kD_k+D_n-D_{n+1}+D_{n+s+1}
  +D_{n+2s+1}+D_{n+3s+2}+D_{n+4s+3}+\ldots.
\end{equation*}
The usual arguments yield
\begin{equation*}
  \label{eq:406}
  D_{n+s}\ge D_{n+s+1}+D_{n+2s+1}+D_{n+3s+2}+D_{n+4s+3}+\ldots
\end{equation*}
and the truth of the final statement of the lemma is clear.
\end{proof}

\subsection{Application of Hall's Theorem}

As mentioned in the introduction to the last section, we shall now use a
theorem of Hall, namely Theorem~2.2 in
\cite{m.47:_sum_and_produc_of_contin_fract}, to show that if $s$ is large
enough then the product of the sets $E(\alpha,s)$ and $F(\alpha,s)$
contains an interval. This idea was used in the context of
inhomogeneous diophantine approximation by Cusick, Moran and Pollington, see
\cite{cusick96:_halls_ray_in_inhom_dioph_approx}.  The actual statement of
Hall's theorem~\cite{m.47:_sum_and_produc_of_contin_fract} concerns the sum of
Cantor sets but, as Hall points out, his result can be applied to products by
taking logarithms.  Specifically, we have
\begin{equation*}
  \label{eq:328}
  \log(E(\alpha,s).F(\alpha,s))=\log E(\alpha,s)\;+\;\log F(\alpha,s)
\end{equation*}
and since the logarithm function is continuous and
strictly increasing, it maps the Cantor dissections of
$G(\alpha,s)$ and $H(\alpha,s)$ to Cantor dissections of
$\log G(\alpha,s)$ and $\log H(\alpha,s)$, respectively.

Before  applying Hall's theorem we need to check that his Condition~1
holds. This condition states that if, in going from level $n$ to $n+1$, an interval
$C$ is replaced by two disjoint intervals $C_{1}$ and $C_{2}$ with an open
interval $C_{12}$ between them, so that $C_{1}\cup C_{12}\cup C_{2}=C$, then
the length of $C_{12}$ should not exceed the minimum of $|C_{1}|$ and
$|C_{2}|$. We note, as Hall does in his discussion of bounded
continued fractions, that the transition from the $n$th
to the $(n+1)$th stage of the Cantor dissections leading to the sets
$F(\alpha,s)$ and $E(\alpha,s)$ can be done by iteratively removing
just one ``middle'' interval at a time. To verify Condition~1 of Hall,
it is enough to show that for any pair of adjacent intervals formed at
the $n$th stage of the Cantor dissection to produce either
$F(\alpha,s)$  or $E(\alpha,s)$,  the minimum of their lengths exceeds
the length of the removed interval. 

We can now verify this for the Cantor dissection for $\log F(\alpha,s)$.
\begin{lem}
\label{thm:abdoas}
There is an integer $s_0\ge N$ such that if $s\ge s_0$ and if
$C_1$ and $C_2$ are non-empty neighbouring intervals arising at the
same stage of the Cantor dissection for $F(\alpha,s)$
then
\begin{equation}
  \label{eq:349}
  |\log C_{12}|\le\min\{|\log C_1|,|\log C_2|\}
\end{equation}
where $C_{12}$ is the open interval lying between $C_1$ and $C_2$.
\end{lem}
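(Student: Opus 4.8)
The plan is to reduce \eqref{eq:349} to a comparison of ordinary lengths and then to read that comparison off from Lemma~\ref{thm:ousds}.

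First a structural observation about the dissection of $H(\alpha,s)$. A child $C(\bc_{n-1},c)$ is empty exactly when the digit string $c_1,\dots,c_{n-1},c$ cannot be completed to the Davenport expansion of a member of $F(\alpha,s)$; by Theorem~\ref{thm:hdjd} and the definition of $F(\alpha,s)$ every forbidden block ends in a digit equal to $0$, to $a_n-1$, or to $a_n-2$, so this can occur only for $c\in\{0,a_n-2,a_n-1\}$. Moreover, if $C(\bc_{n-1},a_n-2)=\emptyset$ then $\bc_{n-1}$ must end in a block $a_j-1,a_{j+1}-2,\dots,a_{n-1}-2$, and appending $a_n-1$ then produces a block of the form \eqref{eq:69}, so $C(\bc_{n-1},a_n-1)=\emptyset$ too. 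Hence the non-empty children of $C(\bc_{n-1})$ form a contiguous run $C(\bc_{n-1},c_{\min}),\dots,C(\bc_{n-1},c_{\max})$, whose right and left ends are $\underline C(\bc_{n-1})$ and $\overline C(\bc_{n-1})$; carrying out the dissection one stage at a time and, within each parent, removing the inter-child gaps from left to right, Hall's Condition~1 follows once we prove, for every pair of consecutive non-empty children $C_1=C(\bc_{n-1},c)$, $C_2=C(\bc_{n-1},c+1)$ with open gap $G$ between them, that $|\log G|\le\min\{|\log C_1|,|\log C_2|\}$.

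To pass from logarithms to lengths, note that the least element of $F(\alpha,s)$ has Davenport expansion $\underbrace{0,\dots,0}_{s},1,\underbrace{0,\dots,0}_{s},1,\dots$, so $F(\alpha,s)\subset[\delta,1)$ with $\delta=D_{s+1}>0$ and every interval of the dissection has positive endpoints, with $|\log I|=\log(\overline I/\underline I)$. Writing $C_1=[p,q]$, $G=(q,r)$, $C_2=[r,t]$ from left to right, one checks directly that $|\log G|\le|\log C_1|$ is equivalent to $q^2\ge pr$, which follows from $|G|\le|C_1|$; and that $|\log G|\le|\log C_2|$ is equivalent to $qt\ge r^2$, which follows from $|C_2|\ge|G|\bigl(1+|G|/q\bigr)$.

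It remains to estimate the three lengths against $D_n$. Put $R=N/(N+1)$; since $(a_i)$ contains no block of $N$ consecutive $2$'s, every complete quotient satisfies $\alpha_i<R$, so $D_m/D_k<R^{m-k}$ for $m>k$. Because $C_1$ has a non-empty right neighbour it is not a ``short'' child, i.e.\ the index $u$ of Lemma~\ref{thm:ousds} vanishes for it, so $q=\overline C_1>S(\bc_{n-1})+(c+1)D_n-D_{n+s-N}$; and $r=\underline C_2<S(\bc_{n-1})+(c+1)D_n+D_{n+s}$ since $C_2$ ends in a non-zero digit. Hence
\[
  |G|=r-q<D_{n+s}+D_{n+s-N}<2R^{\,s-N}D_n .
\]
On the other hand Lemma~\ref{thm:ousds} (using $\overline C>S+D_n-D_{n+s-N}$ when $u=0$ and the conservative bound $\overline C>S+D_{n+N+1}$ otherwise, together with the matching upper bounds on the left endpoints) yields a constant $c_1=c_1(M,N)>0$, independent of $n$, with $|C_i|\ge c_1D_n$ ($i=1,2$) as soon as $s$ is large; the point is that $D_{n+N+1}/D_n\ge M^{-(N+1)}$ is a fixed positive fraction since $N$ is fixed. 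Then $q\ge|C_1|\ge c_1D_n$, so $|G|/q<(2/c_1)R^{\,s-N}$. Choosing $s_0\ge N$ large enough that $R^{\,s_0-N}\le c_1/4$ and that the lower bound $|C_i|\ge c_1D_n$ holds, we get for all $s\ge s_0$ that $|G|<\tfrac12 c_1D_n\le|C_1|$ and $|G|\bigl(1+|G|/q\bigr)<2|G|<c_1D_n\le|C_2|$, whence $|\log G|\le\min\{|\log C_1|,|\log C_2|\}$, proving \eqref{eq:349}.

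The crux is the contiguity of the non-empty children: if an empty slot were trapped between two non-empty children, the resulting gap could exceed its shorter neighbour even after taking logarithms, so Hall's Condition~1 would genuinely fail. Granting contiguity, the rest is bookkeeping with Lemma~\ref{thm:ousds} and the estimate $D_m/D_k<R^{m-k}$; the only slightly delicate point is the uniform lower bound $|C_i|\ge c_1(M,N)D_n$, which is why one must keep $N$ fixed and separate the ``long'' and ``short'' child cases in Lemma~\ref{thm:ousds}.
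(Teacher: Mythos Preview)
Your proof is correct and follows essentially the same strategy as the paper: establish that neighbouring non-empty children differ by exactly $1$ in the last digit (the contiguity observation), and then use the endpoint estimates of Lemma~\ref{thm:ousds} to compare gap and child sizes for large $s$. The paper reduces to the identical multiplicative inequalities $\overline{C_1}^{\,2}\ge\underline{C_1}\,\underline{C_2}$ and $\overline{C_1}\,\overline{C_2}\ge\underline{C_2}^{\,2}$, but then substitutes the explicit endpoint bounds and multiplies everything out (keeping track of the partial sums $S_1,S_2$); you instead pass through the equivalent length conditions $|G|\le|C_1|$ and $|C_2|\ge|G|(1+|G|/q)$, which lets the $S$-terms cancel and reduces the verification to a single uniform bound $|C_i|\ge c_1(M,N)D_n$ versus $|G|<2R^{s-N}D_n$. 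This is a tidier packaging of the same argument rather than a different route.
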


\begin{proof}
Let $s\ge N$ and let
$C_1$ and $C_2$ and $C_{12}$ be as described.
We assume without loss of generality  that $C_1$ lies to the left of $C_2$.
Our aim is to show that if $s$ is large enough then
the number
\begin{equation*}
  \label{eq:350}
  |\log C_{12}|=\log\underline{C_2}-\log\overline{C_1}
\end{equation*}
is less than or equal to both
\begin{equation*}
  \label{eq:351}
  |\log C_1|=\log\overline{C_1}-\log\underline{C_1}\text{ and }
  |\log C_2|=\log\overline{C_2}-\log\underline{C_2}.
\end{equation*}
By rearranging and using the properties of logarithms
we reduce this statement to
\begin{equation}
  \label{eq:352}
  \underline{C_1}\;\underline{C_2}\le\overline{C_1}\;\overline{C_1}\qquad\text{and}\qquad
  \underline{C_2}\;\underline{C_2}\le\overline{C_1}\;\overline{C_2}.
\end{equation}
Note that, since
\begin{equation*}
  \label{eq:353}
  4\underline{C_1}\underline{C_2}=(\underline{C_1}+\underline{C_2})^2-(\underline{C_1}-\underline{C_2})^2,
\end{equation*}
to prove the first of the inequalities in \eqref{eq:352} it is enough to show
\begin{equation*}
  \label{eq:354}
  \underline{C_1}+\underline{C_2}<2\;\overline{C_1}, 
\end{equation*}
and we concentrate on this.

Since  $C_1$ and $C_2$ arise
at the same stage of the dissection and $C_1$ lies to the left of $C_2$
we  write
\begin{equation*}
  \label{eq:355}
  C_1=C(\bc_n)\qquad\text{and}\qquad
  C_2=C(c_1,c_2,\ldots,c_{n-1},c'_n)
\end{equation*}
where $c'_n>c_n$. The key fact here is that $C(c_{1},c_{2},\ldots,c_{n},c)$ is
empty only for the extreme values of $c$, because of the conditions that describe $F(\alpha,s)$. Hence $c'_n=c_n+1$.

We write
\begin{equation*}
  \label{eq:356}
  S_1=S(\bc_n)\qquad\text{and}\qquad
  S_2=S(c_1,c_2,\ldots,c_{n-1},c'_n).
\end{equation*}
Note that $S_2=S_1+D_n$.
Assume  $t$ is  the largest integer with $0\le t\le n$ such that
all of $c_{n-t+1},c_{n-t+2},\ldots,c_n$ are zero
and $u$ the unique integer with $0\le u\le n$ such that
$c_{n-u+1},c_{n-u+2},\ldots,c_n$ is equal to \eqref{eq:323}.
We denote the corresponding integers for $C_2$ by
$t'$ and $u'$, respectively.
We know $u'=0$ else $c_1,c_2,\ldots,c_{n-1},c'_n$ ends with
\begin{equation*}
  \label{eq:357}
  a_{n-u+1}-1,a_{n-u+2}-2,a_{n-u+3}-2,\ldots,a_{n-1}-2,a_n-1
\end{equation*}
implying that $C_2=\emptyset$. Similarly, $t'=0$ since $c'_n\ge1$.
Hence
\begin{equation*}
  \label{eq:358}
  \overline {C_1}>S_1+D_n-D_{n+s-N}\qquad\text{and}\qquad\underline {C_2}<S_2+D_{n+s}.
\end{equation*}
and 
\begin{equation*}
  \label{eq:359}
  \underline {C_1}<S_1+D_{n+1}+D_{n+s}\qquad\text{and}\qquad
  \overline {C_2}>S_2+D_{n+N+1}-D_{n+s-N}.
\end{equation*}
We are now ready to consider the inequalities in \eqref{eq:352}.
The inequalities above imply that
\begin{equation*}
  \label{eq:360}
  2\;\overline{C_1}-(\underline{C_1}+\underline{C_2})>S_1+2D_n-2D_{n+s-N}-S_2-D_{n+1}-2D_{n+s}.
\end{equation*}
Further, $S_2=S_1+D_n$ and $D_{n+s-N}\ge D_{n+s}$ and thus
\begin{equation*}
  \label{eq:361}
  2\;\overline{C_1}-(\underline{C_1}+\underline{C_2})>D_n-D_{n+1}-4D_{n+s-N}.
\end{equation*}
Since \eqref{eq:26} holds for all $i\ge1$ we know there is some $s_0\ge N$
such that
\begin{equation*}
  \label{eq:362}
  1-\alpha_{n+1}-4\;\alpha_{n+1}\alpha_{n+2}\ldots\alpha_{n+s-N}>0
\end{equation*}
and hence
\begin{equation*}
  \label{eq:363}
  D_n-D_{n+1}-4D_{n+s-N}>0
\end{equation*}
if $s\ge s_0$.
Note that the size of $s_0$ is independent of $n$.
It follows that if $s\ge s_0$ then
$\underline{C_1}+\underline{C_2}<2\;\overline{C_1}$ and we have
the desired result.

For the second inequality in \eqref{eq:352} we observe that
\begin{equation*}
  \label{eq:364}
  \overline{C_1}\;\overline{C_2}-\underline{C_2}\;\underline{C_2}
  >(S_1+D_n-D_{n+s-N})(S_2+D_{n+N+1}-D_{n+s-N})-(S_2+D_{n+s})^2.
\end{equation*}
Since $S_1\ge0$ and $S_2\ge D_n$ and $D_{n+s-N}\ge D_{n+s}$ we have
\begin{equation*}
  \label{eq:365}
  \overline{C_1}\;\overline{C_2}-\underline{C_2}\;\underline{C_2}
  >(D_n-D_{n+s-N})(D_n+D_{n+N+1}-D_{n+s-N})-(D_n+D_{n+s-N})^2
\end{equation*}
and hence
\begin{equation*}
  \label{eq:366}
  \overline{C_1}\;\overline{C_2}-\underline{C_2}\;\underline{C_2}
  >D_n(D_{n+N+1}-4D_{n+s-N})-D_{n+N+1}D_{n+s-N}.
\end{equation*}
Clearly $D_n>D_{n+N+1}$ and therefore
it suffices to show that if $s$ is large enough then
\begin{equation*}
  \label{eq:367}
  D_{n+N+1}-4D_{n+s-N}>D_{n+s-N}
\end{equation*}
or equivalently
\begin{equation*}
  \label{eq:368}
  1>5\;\alpha_{n+N+2}\alpha_{n+N+3}\ldots\alpha_{n+s-N}.
\end{equation*}
As above, this is an easy consequence of \eqref{eq:26}.
\end{proof}

We can now verify that Hall's Condition~1 holds for
the dissection for $\log E(\alpha,s)$.
\begin{lem}
\label{thm:dddasd}
There is an integer $s_0\ge N$ such that if $s\ge s_0$ and if
$C_1$ and $C_2$ are non-empty neighbouring intervals arising at the
same stage of the Cantor dissection 
which produces $E(\alpha,s)$ then
\begin{equation*}
  \label{eq:407}
  |\log C_{12}|\le\min\{|\log C_1|,|\log C_2|\}
\end{equation*}
where $C_{12}$ is the open interval lying between $C_1$ and $C_2$.
\end{lem}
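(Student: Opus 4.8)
The plan is to follow the template of Lemma~\ref{thm:abdoas}, but with the extra cases forced by the two interval types $A(\bc_n)$ and $B(\bc_n)$. Fix $s\ge N$ and let $C_1$ (to the left) and $C_2$ be neighbouring nonempty intervals of the dissection, with $C_{12}$ the open gap between them; since all intervals lie in a fixed subinterval of $(0,1]$ (the point $0$, having all-zero Davenport coefficients, is not in $E(\alpha,s)$), taking logarithms shows that \eqref{eq:407} is equivalent to the two product inequalities $\underline{C_1}\,\underline{C_2}\le\overline{C_1}^{\,2}$ and $\underline{C_2}^{\,2}\le\overline{C_1}\,\overline{C_2}$. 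For the first, the identity $4\,\underline{C_1}\,\underline{C_2}=(\underline{C_1}+\underline{C_2})^2-(\underline{C_1}-\underline{C_2})^2$ reduces matters to $\underline{C_1}+\underline{C_2}\le 2\,\overline{C_1}$, and the second I would estimate directly. In every case the estimate will, as in Lemma~\ref{thm:abdoas}, collapse to an inequality of the shape $D_{n+j}>(\mathrm{const})\,D_{n+s-O(1)}$ with the constant bounded independently of $n$ and of $\bc_n$, and hence valid for all $s$ beyond some absolute $s_0$ by \eqref{eq:26}; $s_0$ is then the largest of the finitely many thresholds so produced.

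The first task is to classify the gaps that are \emph{newly} removed in passing from stage $n-1$ to stage $n$. By \eqref{eq:310} these are of two kinds: an ``$A/B$'' gap between $A(\bc_n)$ and $B(\bc_n)$ with $c_n<a_n-2$, and a ``$B/A$'' gap between $B(c_1,\dots,c_{n-1},c)$ and $A(c_1,\dots,c_{n-1},c+1)$. The crucial structural observation is that the gap between $A(\bc_n)$ and $B(\bc_n)$ in the case $c_n=a_n-2$ is \emph{not} new: Lemma~\ref{thm:yahdd} gives $\overline{A(c_1,\dots,c_{n-1},a_n-2)}=S(c_1,\dots,c_{n-1})+D_{n-1}-D_n=\overline{A(c_1,\dots,c_{n-1})}$, and the same reasoning gives $\underline{B(c_1,\dots,c_{n-1},a_n-2)}=\underline{B(c_1,\dots,c_{n-1})}$, so this gap coincides with the $A(c_1,\dots,c_{n-1})$--$B(c_1,\dots,c_{n-1})$ gap already dealt with at stage $n-1$ and need not be rechecked. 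This matters because $c_n=a_n-2$ is exactly the situation in which the sharp estimate for $\underline{B(\bc_n)}$ in Lemma~\ref{thm:oaqpa} is unavailable.

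For a new ``$A/B$'' gap, $C_1=A(\bc_n)$ and $C_2=B(\bc_n)$ with $c_n<a_n-2$: Lemma~\ref{thm:yahdd} gives $\overline{C_1}=S(\bc_n)+D_n-D_{n+1}$ and $\underline{C_1}<S(\bc_n)+D_n-D_{n+1}-D_{n+3N}$, and, since $c_n\ne a_n-2$ while $A(\bc_n)=C_1\ne\emptyset$, the sharp part of Lemma~\ref{thm:oaqpa} gives $\underline{C_2}<S(\bc_n)+D_n-D_{n+1}+D_{n+s}$, together with $\overline{C_2}=S(\bc_n)+D_n$. Substituting, $\underline{C_1}+\underline{C_2}<2\,\overline{C_1}-D_{n+3N}+D_{n+s}$, which gives the first inequality once $s_0\ge 3N$, while the second reduces to an estimate of the form $1-\alpha_{n+1}\gg\alpha_{n+1}\cdots\alpha_{n+s}$, true for $s\ge s_0$ by \eqref{eq:26}. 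For a ``$B/A$'' gap, $C_1=B(c_1,\dots,c_{n-1},c)$ and $C_2=A(c_1,\dots,c_{n-1},c+1)$: now the left neighbour $C_1$ is a nonempty $B$-interval, so the sharp part of Lemma~\ref{thm:yahdd} applies to $C_2$, giving $\underline{C_2}<S(c_1,\dots,c_{n-1},c+1)+D_{n+s}$ with $\overline{C_1}=S(c_1,\dots,c_{n-1},c+1)$, and Lemmas~\ref{thm:yahdd} and \ref{thm:oaqpa} supply $\overline{C_2}$ and $\underline{C_1}$; both reduced inequalities again become $1-\alpha_{n+k}\gg\prod\alpha_i$ statements valid for large $s$.

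The remaining and most delicate part is the bookkeeping around \emph{empty} intervals: when some $A(\bc_n,c)$ or $B(\bc_n,c)$ is empty, the two flanking gaps merge into a single larger gap, and Condition~1 must be checked for this enlarged gap against its surviving nonempty neighbours. The main obstacle is that for such a neighbour — say a $B$-interval whose $A$-sibling has been deleted — the sharp estimate of Lemma~\ref{thm:oaqpa} is no longer automatic, and the weaker bound carries an extra $D_{n+2}$-type term that the $D_{n+3N}$ margin above cannot absorb. Resolving this requires a careful analysis of exactly which prefixes $c_1,\dots,c_n$ force a neighbour's sibling to be empty, and of how emptiness propagates down the dissection; one expects to show that in every such configuration either the sharp bounds do survive for the relevant surviving neighbours, or the merged gap is itself an ``inherited'' gap of the kind identified in the second paragraph and so was already handled at an earlier stage. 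Once this case-analysis is complete, taking $s_0$ to be the maximum of the thresholds (all independent of $n$) gives the lemma, exactly as in Lemma~\ref{thm:abdoas}.
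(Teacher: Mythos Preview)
Your overall strategy and your treatment of the two basic gap types ($A/B$ with $c_n<a_n-2$, and $B/A$) match the paper's proof closely, including the use of the sharper endpoint estimates in Lemmas~\ref{thm:yahdd} and~\ref{thm:oaqpa} under exactly the right side conditions.

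The genuine gap is your final paragraph. You correctly flag the worry that an empty intermediate interval might merge two gaps into one larger gap against which the sharp bounds are unavailable, but you leave it as ``one expects to show\ldots'' without carrying it out. The paper's resolution is much simpler than the propagation/case analysis you anticipate: it shows that \emph{no merging ever occurs}, by proving that the immediate right sibling of every nonempty $C_1$ is itself nonempty. Concretely, if $C_1=A(\bc_n)$ is nonempty and not the rightmost child of its parent (forcing $n=0$ or $c_n\le a_n-3$), then the point with Davenport coefficients $c_1,\ldots,c_n,a_{n+1}-1,a_{n+2}-2,a_{n+3}-2,\ldots$ lies in $E(\alpha,s)$ and witnesses $B(\bc_n)\ne\emptyset$; if $C_1=B(\bc_n)$, the point with coefficients $c_1,\ldots,c_{n-1},c_n+1,a_{n+1}-2,a_{n+2}-2,\ldots$ witnesses $A(c_1,\ldots,c_{n-1},c_n+1)\ne\emptyset$. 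Hence the two configurations you already handled are the \emph{only} configurations, and no further bookkeeping is needed. Incidentally, your ``inherited gap'' observation about $A(c_1,\ldots,c_{n-1},a_n-2)$ versus $B(c_1,\ldots,c_{n-1},a_n-2)$, while correct, is not needed in the paper's framing: there ``neighbouring at the same stage'' means siblings under a common parent, and these two intervals are children of $A(\bc_{n-1})$ and $B(\bc_{n-1})$ respectively, so this pair never arises as a stage-$n$ gap at all.
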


\begin{proof}
Let $s\ge N$ and let
$C_1$ and $C_2$ and $C_{12}$ be as described.
We may assume without loss of generality  that $C_1$ lies to the left of $C_2$.
We know from proof of Lemma~\ref{thm:abdoas} that it is sufficient to prove
the inequalities
\begin{equation}
  \label{eq:408}
  \underline{C_1}\;\underline{C_2}\le\overline{C_1}\;\overline{C_1}\text{ and }
  \underline{C_2}\;\underline{C_2}\le\overline{C_1}\;\overline{C_2}
\end{equation}
hold when $s$ is large enough.
We can also make use of statement \eqref{eq:352}.

We consider two possibilities for $C_1$.
We suppose first that
\begin{equation}
  \label{eq:409}
  C_1=A(\bc_n).
\end{equation}
In this case $B(c_1,c_2,\ldots,c_n)\ne\emptyset$ and therefore
\begin{equation*}
  \label{eq:410}
  C_2=B(\bc_n).
\end{equation*}
To see this, 
we produce a number $\beta$ that belongs to
$B(\bc_n)$.
To this end we note that in the Cantor dissection of $G(\alpha,s)$
the intervals
\begin{equation*}
  \label{eq:411}
  A(c_1,c_2,\ldots,c_{n-1},a_n-2)\text{ and }
  A(c_1,c_2,\ldots,c_{n-1},a_n-1)
\end{equation*}
have no right neighbours since they
result from the dissection of
$A(c_1,c_2,\ldots,c_{n-1})$ and $B(c_1,c_2,\ldots,c_{n-1})$,
respectively.
Therefore, either $n=0$ or $c_n\le a_n-3$.
It follows from the proof of Lemma~\ref{thm:yahdd} that
$\overline C_1$ lies in $E(\alpha,s)$ and has Davenport coefficients 
\begin{equation*}
  \label{eq:412}
  c_1,c_2,\ldots,c_n,a_{n+1}-2,a_{n+2}-2,a_{n+3}-2,\ldots.
\end{equation*}
Now let $\beta=\sum^\infty_{k=1}b_kD_k$ where $b_1,b_2,b_3,\ldots$ is
the sequence
\begin{equation*}
  \label{eq:413}
  c_1,c_2,\ldots,c_n,a_{n+1}-1,a_{n+2}-2,a_{n+3}-2,a_{n+4}-2,\ldots.
\end{equation*}
It is straightforward again to check that 
$\beta\in E(\alpha,s)$.
It now follows from \eqref{eq:320} that $\beta$ belongs to an interval
of the form $A(c'_1,c'_2,\ldots,c'_n)$ or $B(c'_1,c'_2,\ldots,c'_n)$.
By observing that
\begin{equation*}
  \label{eq:414}
  \beta=S(\bc_n)+D_n
\end{equation*}
and applying the inequalities in  \eqref{eq:316},
it can be seen that the only possibility is
$\beta\in B(\bc_n)\ne\emptyset$.

We can now apply Lemmas~\ref{thm:yahdd} and ~\ref{thm:oaqpa} to $C_1$ and $C_2$.
As usual, it is convenient to write $S=S(\bc_n)$.
Lemma~\ref{thm:yahdd} implies
\begin{equation*}
  \label{eq:415}
  \underline{C_1}<S+D_n-D_{n+1}-D_{n+3N}\qquad\text{and}\qquad
  \overline{C_1}=S+D_n-D_{n+1}
\end{equation*}
and Lemma~\ref{thm:oaqpa} implies
\begin{equation*}
  \label{eq:416}
  \underline{C_2}<S+D_n-D_{n+1}+D_{n+s}\qquad\text{and}\qquad
  \overline{C_2}=S+D_n.
\end{equation*}
It follows that
\begin{equation*}
  \label{eq:417}
  2\;\overline{C_1}-(\underline{C_1}+\underline{C_2})>D_{n+3N}-D_{n+s}.
\end{equation*}
Since \eqref{eq:26} holds for all $i\ge1$ we know there is some $s_0\ge 3N+1$
such that if $s\ge s_0$ then
\begin{equation*}
  \label{eq:418}
  1>\alpha_{n+3N+1}\alpha_{n+3N+2}\ldots\alpha_{n+s}.
\end{equation*}
We emphasis that the size of $s_0$ does not depend on $n$.
For such a choice of $s_0$ we have $D_{n+3N}>D_{n+s}$ and hence
$\underline{C_1}+\underline{C_2}<2\;\overline{C_1}$ for all $s\ge s_0$.
An application of \eqref{eq:352} gives first inequality in \eqref{eq:408}
for $s\ge s_0$.

For the second inequality in \eqref{eq:408} we observe that
\begin{equation*}
  \label{eq:419}
  \overline{C_1}\;\overline{C_2}-\underline{C_2}\;\underline{C_2}
  >(S+D_n-D_{n+1})(S+D_n)-(S+D_n-D_{n+1}+D_{n+s})^2.
\end{equation*}
Since $S\ge0$ it follows that
\begin{equation*}
  \label{eq:420}
  \overline{C_1}\;\overline{C_2}-\underline{C_2}\;\underline{C_2}
  >(D_n-D_{n+1})(D_{n+1}-2D_{n+s})-D_{n+s}^2.
\end{equation*}
Therefore it suffices to show there is some $s_0$
(which does not depend on $n$) such that
\begin{equation*}
  \label{eq:421}
  D_n-D_{n+1}>D_{n+s}\qquad\text{and}\qquad D_{n+1}-2D_{n+s}>D_{n+s}
\end{equation*}
or equivalently
\begin{equation*}
  \label{eq:422}
  1>\alpha_{n+1}+\alpha_{n+1}\alpha_{n+2}\ldots\alpha_{n+s}\qquad\text{and}\qquad
  1>3\;\alpha_{n+2}\alpha_{n+3}\ldots\alpha_{n+s}
\end{equation*}
for all $s\ge s_0$.
This is easily done with the help of \eqref{eq:26}.

The other possibility for $C_1$ is that
\begin{equation*}
  \label{eq:423}
  C_1=B(\bc_n).
\end{equation*}
It is easy to see that $\beta=\sum^\infty_{k=1}b_kD_k\in
A(c_1,c_2,\ldots,c_{n-1},c_n+1)$ where $b_1,b_2,b_3,\ldots$ is the sequence
\begin{equation*}
  \label{eq:425}
  c_1,c_2,\ldots,c_{n-1},c_n+1,a_{n+1}-2,a_{n+2}-2,a_{n+3}-2,\ldots,
\end{equation*}
and so $A(c_1,c_2,\ldots,c_{n-1},c_n+1)\ne\emptyset$.
Therefore
\begin{equation*}
  \label{eq:424}
  C_2=A(c_1,c_2,\ldots,c_{n-1},c_n+1).
\end{equation*}

Again we apply Lemmas~\ref{thm:yahdd} and ~\ref{thm:oaqpa} to $C_1$ and $C_2$.
This time we write
\begin{equation*}
  \label{eq:428}
  S_1=S(\bc_n)\qquad\text{and}\qquad
  S_2=S(c_1,c_2,\ldots,c_{n-1},c_n+1).
\end{equation*}
Note that $S_2=S_1+D_n$.
Lemma~\ref{thm:oaqpa} implies
\begin{equation*}
  \label{eq:429}
  \underline{C_1}<S_1+D_n-D_{n+1}+D_{n+2}+D_{n+s}\qquad\text{and}\qquad
  \overline{C_1}=S_1+D_n
\end{equation*}
and Lemma~\ref{thm:yahdd} implies
\begin{equation*}
  \label{eq:430}
  \underline{C_2}<S_2+D_{n+s}\qquad\text{and}\qquad
  \overline{C_2}=S_2+D_n-D_{n+1}.
\end{equation*}
These combine to yield 
\begin{equation*}
  \label{eq:431}
  2\;\overline{C_1}-(\underline{C_1}+\underline{C_2})>D_{n+1}-D_{n+2}-2D_{n+s}.
\end{equation*}
Using \eqref{eq:26} we know there is some $s_0\ge1$
(which does not depend on $n$)
such that
\begin{equation*}
  \label{eq:432}
  1>\alpha_{n+2}+2\;\alpha_{n+2}\alpha_{n+3}\ldots\alpha_{n+s}
\end{equation*}
and hence $D_{n+1}>D_{n+2}+2D_{n+s}$ for all $s\ge s_0$.
As a result $\underline{C_1}+\underline{C_2}<2\;\overline{C_1}$ if $s\ge s_0$
and using \eqref{eq:352} we conclude that the first inequality
in \eqref{eq:408} holds if $s$ is large enough.

To see that the second inequality in \eqref{eq:408} is true we note that
\begin{equation*}
  \label{eq:433}
  \overline{C_1}\;\overline{C_2}-\underline{C_2}\;\underline{C_2}
  >(S_1+D_n)(S_2+D_n-D_{n+1})-(S_2+D_{n+s})^2.
\end{equation*}
Since $S_2=S_1+D_n$ and $S\ge0$ it follows that
\begin{equation*}
  \label{eq:434}
  \overline{C_1}\;\overline{C_2}-\underline{C_2}\;\underline{C_2}
  >D_n(D_n-D_{n+1}-2D_{n+s})-D_{n+s}^2.
\end{equation*}
Therefore it suffices to show there is some $s_0$
(which does not depend on $n$) such that
\begin{equation*}
  \label{eq:435}
  D_n-D_{n+1}-2D_{n+s}>D_{n+s}
\end{equation*}
or equivalently
\begin{equation*}
  \label{eq:436}
  1>\alpha_{n+1}+3\;\alpha_{n+1}\alpha_{n+2}\ldots\alpha_{n+s}
\end{equation*}
for all $s\ge s_0$.
Again this is easily done with the help of \eqref{eq:26}.
\end{proof}

These sequence of lemmas leads to the following key precursor to the
main result.
\begin{thm}
  \label{thm:7gadaa}
There is an integer $s_0\ge N$ such that if $s\ge s_0$ and $R=N/(N+1)$ and
\begin{equation*}
  \label{eq:437}
  P_1=\frac{R^{2s}}{(1-R^{(s-N)})^2}\qquad\text{and}\qquad
  P_2=1-R^{(s-N)}
\end{equation*}
then $P_2\ge P_1$ and the interval $[P_1,P_2]$ lies in
the product of the sets $E(\alpha,s)$ and $F(\alpha,s)$.
\end{thm}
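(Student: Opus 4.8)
The plan is to invoke Hall's theorem (Theorem~2.2 of \cite{m.47:_sum_and_produc_of_contin_fract}) for the Cantor sets $\log E(\alpha,s)$ and $\log F(\alpha,s)$ and then exponentiate. First I would fix $s_0\ge N$ large enough that the conclusions of Lemmas~\ref{thm:abdoas} and \ref{thm:dddasd} both apply. For such $s$ the two dissections constructed above --- of $H(\alpha,s)$ producing $F(\alpha,s)$, and of $G(\alpha,s)$ producing $E(\alpha,s)$ --- are genuine Cantor dissections: every generating interval $C(\bc_n)$, $A(\bc_n)$, $B(\bc_n)$ has length at most $D_n$, and $D_n\to0$ since each complete quotient $\alpha_i$ (like each $\overline\alpha_i$ in \eqref{eq:26}) is less than $R=N/(N+1)$; moreover $E(\alpha,s),F(\alpha,s)\subseteq(0,1]$, being bounded below by $\underline A(\;)\ge D_{s+1}>0$ and $\underline C(\;)\ge D_{s+1}>0$ by \eqref{eq:316} and \eqref{eq:294}. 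Since $\log$ is continuous and strictly increasing on $(0,1]$, it carries the two dissections to genuine Cantor dissections of $\log G(\alpha,s)$ and $\log H(\alpha,s)$; and, as was already observed, every stage of the original dissections can be effected by removing one ``middle'' interval at a time, so Lemmas~\ref{thm:abdoas} and \ref{thm:dddasd} say exactly that Hall's Condition~1 holds at every step of these $\log$-dissections.

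Hall's theorem then gives $\log E(\alpha,s)+\log F(\alpha,s)\supseteq\log G(\alpha,s)+\log H(\alpha,s)$; since the reverse inclusion is automatic and all quantities are positive, exponentiating yields
\begin{equation*}
  E(\alpha,s)\cdot F(\alpha,s)=G(\alpha,s)\cdot H(\alpha,s)=\bigl[(\inf E)(\inf F),\,(\sup E)(\sup F)\bigr]
\end{equation*}
with $E=E(\alpha,s)$, $F=F(\alpha,s)$, so it remains only to check $(\inf E)(\inf F)\le P_1$ and $(\sup E)(\sup F)\ge P_2$ (which then also forces $P_1\le P_2$). For the lower endpoint: the final clause of Lemma~\ref{thm:yahdd} with $n=0$ gives $\inf E=\underline A(\;)<D_s$, the $t=0$ clause of Lemma~\ref{thm:ousds} with $n=0$ gives $\inf F=\underline C(\;)<D_s$, and $D_s<R^s$, so $(\inf E)(\inf F)<R^{2s}\le R^{2s}/(1-R^{s-N})^2=P_1$. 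For the upper endpoint: the $u=0$ clause of Lemma~\ref{thm:ousds} with $n=0$ gives $\sup F=\overline C(\;)>1-D_{s-N}>1-R^{s-N}=P_2$, while applying Lemma~\ref{thm:yahdd} to the (non-empty) interval $A(a_1-1)$ arising in the dissection of $G(\alpha,s)$ gives $\overline A(a_1-1)=S(a_1-1)+D_1-D_2=a_1D_1-D_2=1$ (using $D_1=\alpha_1$ and $D_2=\alpha_1\alpha_2=a_1\alpha_1-1$); since $E\subseteq[0,1]$ this forces $\sup E=1$. Hence $(\sup E)(\sup F)=\sup F>P_2$, so $[P_1,P_2]\subseteq E\cdot F$; and $P_1\le P_2$ amounts to $R^{2s}\le(1-R^{s-N})^3$, whose left side tends to $0$ and right side to $1$, so it holds once $s_0$ is enlarged if necessary.

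The weight of the argument lies in the earlier lemmas: Lemmas~\ref{thm:abdoas} and \ref{thm:dddasd} verify Hall's Condition~1 for the two $\log$-dissections, and Lemmas~\ref{thm:ousds}, \ref{thm:yahdd}, \ref{thm:oaqpa} pin down the endpoints; what is left here is essentially bookkeeping. The one point I would be careful about is that $\sup E(\alpha,s)$ equals $1$ \emph{exactly}, and not merely $1-o(1)$: the bound $\sup F(\alpha,s)>1-R^{s-N}$ would be of no use if it then had to be multiplied by a factor strictly below $1$, since $(1-R^{s-N})^2<1-R^{s-N}=P_2$. The identity $a_1D_1-D_2=1$ --- equivalently the ``all nines'' relation $1=(a_1-1)D_1+\sum_{k\ge2}(a_k-2)D_k$ for the Davenport expansion --- is precisely what supplies $\sup E(\alpha,s)=1$ and lets the product interval reach all the way up to $\sup F(\alpha,s)$.
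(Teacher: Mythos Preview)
Your approach is essentially the paper's: verify Hall's Condition~1 via Lemmas~\ref{thm:abdoas} and~\ref{thm:dddasd}, apply Hall's Theorem~2.2 to $\log E(\alpha,s)+\log F(\alpha,s)$, and then bound the endpoints using Lemmas~\ref{thm:ousds}, \ref{thm:yahdd}, \ref{thm:oaqpa}. Your endpoint computations ($\sup E=1$, $\sup F>1-D_{s-N}$, $\inf E<D_s$, $\inf F<D_s$) match the paper's, though you reach $\sup E=1$ through $\overline A(a_1-1)=a_1D_1-D_2=1$ rather than $\overline B(\;)=1$ as the paper does.

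There is one point to tighten. You assert that Hall's theorem yields $\log E+\log F\supseteq\log G+\log H$, i.e.\ the \emph{entire} sum interval, and hence $E\cdot F=[x_1y_1,\,x_2y_2]$. Hall's Theorem~2.2 as invoked in the paper gives only the sub-interval
\[
\bigl[\log x_2+\log y_2-2\min\{\log x_2-\log x_1,\ \log y_2-\log y_1\},\ \log x_2+\log y_2\bigr],
\]
that is, after exponentiating, $\bigl[x_2y_2\bigl(\max\{x_1/x_2,\,y_1/y_2\}\bigr)^{2},\ x_2y_2\bigr]$; when $|\log G|\ne|\log H|$ this is strictly smaller than $[x_1y_1,x_2y_2]$. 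Your stronger claim is not what Theorem~2.2 of Hall asserts, so as written there is a gap at that step. Fortunately it is harmless for the conclusion: with $x_2=1$ and $y_2\le 1$ one has
\[
x_2y_2\bigl(\max\{x_1/x_2,\,y_1/y_2\}\bigr)^{2}\le\Bigl(\frac{D_s}{1-D_{s-N}}\Bigr)^{2}\le\frac{R^{2s}}{(1-R^{s-N})^{2}}=P_1,
\]
which is exactly why the paper builds the factor $(1-R^{s-N})^{-2}$ into the definition of $P_1$. So either replace your ``full interval'' claim by the genuine output of Hall's Theorem~2.2 and use this estimate, or cite a stronger sum-of-Cantor-sets result that actually delivers the whole interval.
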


\begin{proof}
The proof applies Theorem~2.2 in Hall's paper
\cite{m.47:_sum_and_produc_of_contin_fract} to the sum
\begin{equation}
  \label{eq:438}
  \log E(\alpha,s)\;+\;\log F(\alpha,s).
\end{equation}
It is appropriate to outline why this is possible.
In the last section we showed that the sets
$E(\alpha,s)$ and $F(\alpha,s)$ are the result of
Cantor dissections of the intervals $G(\alpha,s)$ and $H(\alpha,s)$.
By applying the logarithm function it follows that the sets
$\log E(\alpha,s)$ and $\log F(\alpha,s)$ are the result of
Cantor dissections of the intervals $\log G(\alpha,s)$ and $\log H(\alpha,s)$.
We know from Lemmas~\ref{thm:abdoas} and ~\ref{thm:dddasd} that
these dissections satisfy Condition~1 in Hall's paper, if $s$ is large enough.
In other words, there is some $s_0\ge N$ such that for all $s\ge s_0$
Hall's theorem applies to the sum \eqref{eq:438}.
Note that since $R<1$ we can choose $s_0$ so that we also have $P_2>P_1$.

Hall's theorem implies that the sum \eqref{eq:438} contains the interval
\begin{equation*}
  \label{eq:439}
  [\log x_2+\log y_2-2\min\{\log x_2-\log x_1,\;\log y_2-\log y_1\}
  ,\;\log x_2+\log y_2]
\end{equation*}
where
\begin{equation*}
  \label{eq:440}
  x_1=\underline G(\alpha,s)\qquad x_2=\overline G(\alpha,s)\qquad
  y_1=\underline H(\alpha,s)\qquad y_2=\overline H(\alpha,s).
\end{equation*}
It follows immediately that
the product of $E(\alpha,s)$ and $F(\alpha,s)$ contains the interval
\begin{equation*}
  \label{eq:441}
  [x_2y_2(\max\{x_1/x_2,y_1/y_2\})^2,\;x_2y_2].
\end{equation*}
To prove the lemma it suffices to show
\begin{equation}
  \label{eq:442}
  x_2y_2(\max\{x_1/x_2,y_1/y_2\})^2\le P_1\qquad\text{and}\qquad
  x_2y_2\ge P_2.
\end{equation}
To this end we observe that $\overline G(\alpha,s)=\overline B(\;)$ and
$\overline H(\alpha,s)=\overline C(\;)$ and hence Lemma~\ref{thm:ousds} and ~\ref{thm:oaqpa} imply
$x_2=1$ and $y_2>1-D_{s-N}$. Therefore $x_2y_2>1-D_{s-N}$.
We know $D_{s-N}=\alpha_1\alpha_2\ldots\alpha_{s-N}$ and
since \eqref{eq:26} holds for all $i\ge1$
it is easy to see that the second inequality in \eqref{eq:442} is true.

For the first inequality in \eqref{eq:442} we observe that
$\underline G(\alpha,s)=\underline A(\;)$ and $\underline H(\alpha,s)=\underline C(\;)$ and hence
Lemmas~\ref{thm:ousds} and ~\ref{thm:yahdd} imply $x_1<D_s$ and $y_1<D_s$.
Thus
\begin{equation*}
  \label{eq:443}
  x_1/x_2<D_s\qquad\text{and}\qquad y_1/y_2<D_s/(1-D_{s-N}).
\end{equation*}
Clearly $x_2y_2<1$ and it follows that
\begin{equation*}
  \label{eq:444}
  x_2y_2(\max\{x_1/x_2,y_1/y_2\})^2<\frac{D_s^2}{(1-D_{s-N})^2}.
\end{equation*}
The truth of the first inequality in \eqref{eq:440} can now be seen
by expressing $D_{s-N}$ and $D_s$ in terms of the numbers
$\alpha_i$ and applying \eqref{eq:26}.
\end{proof}

Finally, we return to the sets $E(\alpha^-,s)$ and $F(\alpha^+_{r+1},s)$,
where $r\ge1$.
Recall that $\alpha^-$ and $\alpha^+$ are defined by \eqref{eq:200} and
$\alpha^+_{r+1}$ by  \eqref{eq:200}.
We know from Lemma~\ref{thm:616161}
that $\alpha^-$ and $\alpha^+$ satisfy all the constraints
we have placed on $\alpha$.
Clearly the same is true of $\alpha^+_{r+1}$.
We can, therefore,  replace $E(\alpha,s)$ and $F(\alpha,s)$ in Theorem~\ref{thm:abdoas} with
$E(\alpha^-,s)$ and $F(\alpha^+_{r+1},s)$, respectively.
In this manner we obtain the following corollary.

\begin{corollary}
There is an integer $s_0\ge N$ such that if $s\ge s_0$ and $R=N/(N+1)$ and
\begin{equation*}
  \label{eq:445}
  P_1=\frac{R^{2s}}{(1-R^{(s-N)})^2}\qquad\text{and}\qquad
  P_2=1-R^{(s-N)}
\end{equation*}
then $P_2\ge P_1$ and
the product of the sets $E(\alpha^-,s)$ and $F(\alpha^+_{r+1},s)$, where $r\ge1$,
contains the interval $[P_1,P_2]$.
\end{corollary}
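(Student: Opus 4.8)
The plan is to obtain the corollary from Theorem~\ref{thm:7gadaa} essentially by inspection. The chain of results leading to Theorem~\ref{thm:7gadaa} --- Lemmas~\ref{thm:ousds}, \ref{thm:yahdd}, \ref{thm:oaqpa}, \ref{thm:abdoas}, \ref{thm:dddasd} and the closing computation --- uses nothing about $\alpha$ beyond the two structural facts packaged in \eqref{eq:26}: that the negative continued fraction partial quotients are at most $M$, and that runs of consecutive $2$'s have length at most $N-1$. Moreover the threshold $s_{0}$ produced at each stage is extracted purely from \eqref{eq:26}, so it depends only on $M$ and $N$. By Lemma~\ref{thm:616161}, the numbers $\alpha^{-}$ and $\alpha^{+}$, and hence the tail $\alpha^{+}_{r+1}$, share these two properties with the \emph{same} constants $M$ and $N$; consequently \eqref{eq:26}, together with the companion bounds $\alpha_{i}<R$ used throughout, holds for each of them with the same ratio $R=N/(N+1)$, so every product $\alpha_{i+1}\alpha_{i+2}\cdots\alpha_{j}$ formed from their digits --- in particular every relevant $D_{k}$ --- is at most $R^{\,j-i}$. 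Fix one common $s_{0}$, enlarged if necessary so that $R^{s_{0}}$ is small (possible since $R<1$). Then, for $s\ge s_{0}$, Lemma~\ref{thm:dddasd} applied with $\alpha=\alpha^{-}$ shows the Cantor dissection producing $\log E(\alpha^{-},s)$ satisfies Hall's Condition~1, and Lemma~\ref{thm:abdoas} applied with $\alpha=\alpha^{+}_{r+1}$ shows the dissection producing $\log F(\alpha^{+}_{r+1},s)$ satisfies Hall's Condition~1.

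Next I would apply Theorem~2.2 of \cite{m.47:_sum_and_produc_of_contin_fract} to the sum $\log E(\alpha^{-},s)+\log F(\alpha^{+}_{r+1},s)$. Hall's theorem concerns the sum of two Cantor sets whose dissections each satisfy Condition~1, and these two sets need not arise from the same dissection, nor from the same underlying irrational --- this is exactly the freedom already exploited for $E(\alpha,s)$ and $F(\alpha,s)$, the only difference now being that the two dissections belong to the distinct numbers $\alpha^{-}$ and $\alpha^{+}_{r+1}$. Exponentiating the interval Hall produces, the product $E(\alpha^{-},s)\cdot F(\alpha^{+}_{r+1},s)$ contains $[\,x_{2}y_{2}(\max\{x_{1}/x_{2},\,y_{1}/y_{2}\})^{2},\ x_{2}y_{2}\,]$, where $x_{1},x_{2}$ are the endpoints of $G(\alpha^{-},s)$ and $y_{1},y_{2}$ those of $H(\alpha^{+}_{r+1},s)$, exactly as in the proof of Theorem~\ref{thm:7gadaa}.

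Finally I would read off these endpoints from Lemmas~\ref{thm:ousds}, \ref{thm:yahdd} and \ref{thm:oaqpa} applied to the relevant number: $x_{2}=\overline G(\alpha^{-},s)=1$, $y_{2}=\overline H(\alpha^{+}_{r+1},s)>1-D_{s-N}$, and $x_{1}<D_{s}$, $y_{1}<D_{s}$, where each $D$ is computed from the continued fraction currently in play. Since \eqref{eq:26} holds for $\alpha^{-}$ and for $\alpha^{+}_{r+1}$ with the common ratio $R$, each such $D_{k}$ is at most $R^{k}$, so the computation at the end of the proof of Theorem~\ref{thm:7gadaa} goes through verbatim and gives $x_{2}y_{2}\ge 1-R^{s-N}=P_{2}$ together with $x_{2}y_{2}(\max\{x_{1}/x_{2},\,y_{1}/y_{2}\})^{2}<R^{2s}/(1-R^{s-N})^{2}=P_{1}$, and $P_{2}\ge P_{1}$. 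Hence $[P_{1},P_{2}]\subseteq E(\alpha^{-},s)\cdot F(\alpha^{+}_{r+1},s)$.

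The one point that must be handled with care --- the main obstacle, though it is really a bookkeeping matter rather than a new difficulty --- is the uniformity: one has to check that the $s_{0}$ coming out of Lemmas~\ref{thm:abdoas} and \ref{thm:dddasd}, and out of the final estimate in Theorem~\ref{thm:7gadaa}, genuinely depends on $\alpha$ only through $M$ and $N$, so that a single $s_{0}$ works for $\alpha^{-}$ and $\alpha^{+}_{r+1}$ at once (independently of $r$) and the output constants $P_{1},P_{2}$ are literally those displayed in the statement. The repeated remarks in the preceding proofs that the various thresholds do not depend on $n$, and that every bound is an immediate consequence of \eqref{eq:26}, are precisely what make this transparent; no new analytic content is needed.
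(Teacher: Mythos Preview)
Your proposal is correct and follows essentially the same approach as the paper: the paper's own argument for the corollary is simply the observation that, by Lemma~\ref{thm:616161}, $\alpha^{-}$ and $\alpha^{+}_{r+1}$ satisfy the same constraints (partial quotients bounded by $M$, runs of $2$'s of length at most $N-1$) as the generic $\alpha$ used in Theorem~\ref{thm:7gadaa}, so one may substitute $E(\alpha^{-},s)$ and $F(\alpha^{+}_{r+1},s)$ for $E(\alpha,s)$ and $F(\alpha,s)$. Your write-up is in fact more explicit than the paper's one-paragraph justification, and you correctly isolate the two points that actually need checking --- that Hall's theorem permits the two Cantor dissections to come from different underlying irrationals, and that the threshold $s_{0}$ depends only on $M$ and $N$ (hence is uniform in $r$).
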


\subsection{The existence of Hall's ray}

In this section, we prove the existence of
a Hall's ray in the set ${\mathcal S}_+(\alpha)$ in~\eqref{eq:4}; that
is we prove Theorem~\ref{thm:1}. 

\begin{proof}[Proof of Theorem~\ref{thm:1}]
  The proof of this theorem consists of showing that the set ${\mathcal S}_+(\alpha)$
  contains a chain of intersecting intervals whose endpoints converge to zero.
  We shall construct the chain with the help of Theorem~\ref{thm:main_r_s} and the Corollary to
  Theorem~\ref{thm:7gadaa}.

Let $s_0\ge N$ be the integer mentioned in the Corollary to Theorem~\ref{thm:7gadaa}
and define $r_0$ to be the smallest integer which is greater than
or equal to $s_0L$.
Note that since $L\ge1$ we have $s_0=\lfloor r_0/L\rfloor$ where as usual
$\lfloor x\rfloor$ denotes the largest integer which is less than or
equal to $x$.
Now let $r$ be an integer with $r\ge r_0$ and put $s=\lfloor r/L\rfloor$.
Since $r/s\ge L$ we can apply Theorem~\ref{thm:main_r_s}.
Thus for every number $x$ in the product of the sets
$E(\alpha^-,s)$ and $F(\alpha^+_{r+1},s)$ there is some $\beta$ with
$0<\beta<1$ such that
\begin{equation*}
  \label{eq:450}
  {\mathcal M}^{+}(\alpha,\beta)=\frac{xD^+_r}{1-\alpha^-\alpha^+}.
\end{equation*}
Because $r\ge r_0$ we know that $s\ge s_0$.
Therefore Theorem~\ref{thm:7gadaa} implies $P_1\le P_2$ and
the product of the sets $E(\alpha^-,s)$ and $F(\alpha^+_{r+1},s)$ contains
the interval $[P_1,P_2]$ where
\begin{equation*}
  \label{eq:451}
  P_1=\frac{R^{2s}}{(1-R^{(s-N)})^2}\qquad\text{and}\qquad
  P_2=1-R^{(s-N)}
\end{equation*}
and $R=N/(N+1)$.
It follows that for every number $\mu$ in the interval
\begin{equation}
  \label{eq:452}
  \left[\frac{P_1D^+_r}{1-\alpha^-\alpha^+}\;
  ,\;\frac{P_2D^+_r}{1-\alpha^-\alpha^+}\right]
\end{equation}
there is some $\beta$ with $0<\beta<1$ such that $M(\alpha,\beta)=\mu$.
In other words the interval \eqref{eq:452} lies in the set ${\mathcal S}^{+}(\alpha)$.
Since $r$ was any integer with $r\ge r_0$ we conclude that
${\mathcal S}^{+}(\alpha)$  contains a chain of intervals.

By choosing $s_0$ large enough
we can ensure that the intervals just mentioned intersect.
To this end let $s'=\lfloor (r+1)/L\rfloor$ and set
\begin{equation*}
  \label{eq:453}
  P'_1=\frac{R^{2s'}}{(1-R^{(s'-N)})^2}\qquad\text{and}\qquad
  P'_2=1-R^{(s'-N)}.
\end{equation*}
Note that $s'\ge s$.
According to the argument above, the interval for the integer $r+1$ is
\begin{equation*}
  \label{eq:454}
  \left[\frac{P'_1D^+_{r+1}}{1-\alpha^-\alpha^+}\;
  ,\;\frac{P'_2D^+_{r+1}}{1-\alpha^-\alpha^+}\right].
\end{equation*}
It will overlap the interval \eqref{eq:452} if both the inequalities
\begin{equation}
  \label{eq:455}
  \frac{P'_1D^+_{r+1}}{1-\alpha^-\alpha^+}\;
    \le\;\frac{P_2D^+_r}{1-\alpha^-\alpha^+}\qquad\text{and}\qquad
  \frac{P_1D^+_r}{1-\alpha^-\alpha^+}\;
    \le\;\frac{P'_2D^+_{r+1}}{1-\alpha^-\alpha^+}
\end{equation}
hold.
These inequalities become
$P'_1\alpha^+_{r+1}\le P_2$ and $P_1\le P'_2\alpha^+_{r+1}$ and,
substituting for $P_1$, $P_2$, $P'_1$ and $P'_2$ and
rearranging, we have
\begin{equation}
  \label{eq:456}
  R^{2s'}\alpha^+_{r+1}\le(1-R^{(s'-N)})^2(1-R^{(s-N)})
\end{equation}
and
\begin{equation}
  \label{eq:457}
  R^{2s}\le(1-R^{(s-N)})^2(1-R^{(s'-N)})\alpha^+_{r+1}.
\end{equation}
Now we observe that $R<1$ and hence the quantities $R^{2s}$ and $R^{(s-N)}$
and $R^{2s'}$ and $R^{(s'-N)}$ all converge to zero as
$s$ and $s'$ increase to infinity.
Since $s'\ge s\ge s_0$ and
the term $\alpha^+_{r+1}$ satisfies $1/M<\alpha^+_{r+1}<N/(N+1)$,
it is clear that by choosing $s_0$ sufficiently large we can ensure that
\eqref{eq:455} always holds.
We conclude as indicated that $s_0$ can be chosen so that
successive members in the chain of intervals in ${\mathcal S}^{+}(\alpha)$
intersect one another.
Evidently the endpoints of the interval \eqref{eq:452}. 
converge to zero as $r$ increases to infinity.
\end{proof}

\bibliographystyle{plain}
\bibliography{inhomog_bib}
\end{document}